\colorlet{darkgreen}{green!40!black}
\ifpdf \usepackage[pdftex]{hyperref}
\else \usepackage[ps2pdf]{hyperref} 
\newcommand{\R}{\mathbb{R}}\newcommand{\Z}{\mathbb{Z}}\newcommand{\Q}{\mathbb{Q}}\newcommand{\C}{\mathbb{C}}
\newcommand{\e}{\varepsilon}
\newcommand{\Ker}{\textnormal{Ker}}
\newcommand{\tn}[1]{\textnormal{#1}}
\newcommand{\ov}[1]{\overline{#1}}
\renewcommand{\k}[1]{\langle #1 \rangle}
\renewcommand{\sb}{\subseteq}
\newcommand{\ra}{\rightarrow}
\newcommand{\xra}[1]{\xrightarrow{#1}}
\newcommand{\MTU}{\mathbf{MTU}}\newcommand{\MU}{\mathbf{MU}}\newcommand{\Hom}{\mathbf{Hom}}
\newtheorem{remark}{Remark}[section]
\newtheorem{definition}{Definition}[section]
\newtheorem{theorem}{Theorem}[section]
\newtheorem{proposition}[theorem]{Proposition}
\newtheorem{corollary}[theorem]{Corollary}
\newtheorem{lemma}[theorem]{Lemma}
\begin{document}
\title{Cobordism Obstructions to Complex Sections}
\author{Dennis Nguyen}
\email{dpn@uoregon.edu}
\address{ Department of Mathematics, University of Oregon, Fenton Hall, 1021 E 13th Ave, Eugene, OR, 97403 }

\keywords{complex cobordism, complex sections, spectra, cobordism category.}

	\begin{abstract}
		A notion of vector field cobordism for oriented manifolds was defined by B\"okstedt and Svane. We extend this notion to define complex section cobordism for almost complex manifolds. We then determine the complex section cobordism groups and a relevant cobordism category. We describe an obstruction which tells us when a cobordism class contains a manifold, which can be equipped with $r$ linearly independent complex sections, in terms of the Chern numbers. Finally, we show that this obstruction vanishes for certain multiplicative generators in the complex cobordism ring.
	\end{abstract}
\maketitle
	\section{Introduction}
	\subsection{Summary of Results}
	
	There is a classical problem to determine whether a manifold admits $r$
        linearly independent tangent vector fields. In the case
        of one everywhere non-zero vector field, this problem was solved by Hopf \cite{hopf} and the
        obstruction is the Euler characteristic of the manifold. B\"okstedt, Dupont and Svane \cite{bokstedt} approached this
        problem by instead determining the obstruction to finding a cobordant manifold with $r$ vector fields. For small $r$, they were able to solve this problem in
        \cite{bokstedt}. We extend their results by looking at
        obstructions to finding linearly independent complex sections of the tangent bundle of almost complex manifolds. In this case, we are able to describe the rational obstruction for almost complex manifolds. This obstruction is given in terms of Chern characteristic numbers. Our main result is the following sufficient condition for finding $r$ complex sections:
	
	\begin{theorem}\label{ratobsintro}
		Let $M^{2d}$ be a $d$-dimensional almost-complex manifold. Then there exists integer $c$ such that the cobordism class $c[M^{2d}]$ contains a manifold $N^{2d}$ with $r$ complex sections on the tangent bundle $TN$ if and only if $s_\omega(M^{2d})=0$ for all $\omega$ of length greater than $d-r$.
	\end{theorem}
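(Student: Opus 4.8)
The plan is to reduce the theorem to identifying, rationally, the image of the forgetful homomorphism $F\colon \Omega^U_{2d}[r]\to\Omega^U_{2d}$ from the complex section cobordism group to the ordinary complex cobordism group $\Omega^U_{2d}=\pi_{2d}\MU$. Since $\Omega^U_*$ is a polynomial ring, hence torsion free, ``there exists $c$ with $c[M]\in\Img(F)$'' is equivalent to ``$[M]$ lies in the $\Q$-span of $\Img(F)$'', and since Chern numbers are cobordism invariants that vanish on torsion, the whole statement is rational. So it suffices to prove $\Img(F)\otimes\Q=\{x : s_\omega(x)=0\text{ for all }\ell(\omega)>d-r\}$.

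The inclusion $\subseteq$ (the ``only if'' direction) is elementary. If $N^{2d}$ is almost complex with $r$ independent complex sections, then $TN\cong\C^r\oplus\xi$ for a complex bundle $\xi$ of rank $d-r$, so adding the trivial summand does not change the total Chern class and $c_i(TN)=c_i(\xi)=0$ for $i>d-r$; equivalently only $d-r$ of the Chern roots of $TN$ are nonzero. As $s_\omega$ is the monomial symmetric function $m_\omega$ in the Chern roots, and $m_\omega$ in $d-r$ variables vanishes identically exactly when $\ell(\omega)>d-r$, we get $s_\omega(N)=0$ for every $\omega$ with $\ell(\omega)>d-r$; applied to a representative $N$ of $c[M]$ (with $c\neq 0$) and using additivity $s_\omega(c[M])=c\,s_\omega(M)$, this gives $s_\omega(M)=0$.

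For the reverse inclusion I would identify $\Omega^U_{2d}[r]$ with $\pi_0$ of the Madsen--Tillmann spectrum of the tangential structure ``almost complex with $r$ complex sections'', namely $\theta\colon BU(d-r)\to BO(2d)$ classifying $\C^r\oplus\gamma_{d-r}$, so that $\Omega^U_{2d}[r]\cong\pi_0\big(BU(d-r)^{-\theta^*\gamma_{2d}}\big)$ by Pontryagin--Thom, with $F$ induced by the evident map of Thom spectra covering $BU(d-r)\xra{-\oplus\C^r}BU(d)$ (composed with the comparison $\MTU(d)\to\MU$, which is a rational equivalence in the degrees we need). Rationally every spectrum in sight is a wedge of suspensions of $H\Q$, so after the Thom isomorphism $F\otimes\Q$ becomes the map $H_{2d}(BU(d-r);\Q)\to H_{2d}(BU(d);\Q)\cong\Omega^U_{2d}\otimes\Q$ induced by $-\oplus\C^r$. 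Dually this is the surjection $\Q[c_1,\dots,c_d]_{2d}\twoheadrightarrow\Q[c_1,\dots,c_{d-r}]_{2d}$ killing $c_{d-r+1},\dots,c_d$, so $F\otimes\Q$ is injective with image the annihilator of the ideal $(c_{d-r+1},\dots,c_d)$ in degree $2d$. Finally I would recognize this ideal: since $m_\omega$ evaluated in $d-r$ variables is zero precisely when $\ell(\omega)>d-r$, each $s_\omega$ with $\ell(\omega)>d-r$ lies in the ideal; the $s_\omega$ with $\omega\vdash d$ form a basis of $\Q[c_1,\dots,c_d]_{2d}$, and the dimension of the ideal in degree $2d$ is $p(d)$ minus the number of partitions of $d$ into parts $\le d-r$, which by conjugation of partitions equals the number of $\omega\vdash d$ with $\ell(\omega)>d-r$; hence these $s_\omega$ form a basis of the ideal, and $\Img(F)\otimes\Q$ is exactly the subspace cut out by their vanishing.

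The main obstacle is the identification in the third paragraph: ``having $r$ complex sections'' is an \emph{unstable} condition on the tangent bundle of a $2d$-manifold, so one cannot argue with $\MU$ directly but must use the tangential (Madsen--Tillmann) Thom spectrum and verify that $F$, together with the destabilization $\Omega^U_{2d}[0]\to\Omega^U_{2d}$, is modeled rationally by the homology map above — in particular that destabilizing the almost complex structure does not change the group rationally. Once the spectrum and the forgetful map are pinned down, the remaining work is the Thom-isomorphism bookkeeping and the symmetric-function lemma, and it is worth checking the extreme cases $r=0$ (the condition is vacuous and $c=1$, $N=M$ works), $r=1$ (where $s_{(1,\dots,1)}=c_d$ recovers the Euler characteristic as the sole obstruction), and $r=d$ (all Chern numbers must vanish, i.e.\ $[M]$ is torsion).
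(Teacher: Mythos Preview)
Your proposal is correct and is essentially the paper's own argument: both set up the forgetful map via the Madsen--Tillmann spectrum $\MTU(d-r)\to\MU$, pass to rational homotopy (which by Thom isomorphism becomes $H_{2d}(BU(d-r);\Q)\to H_{2d}(BU(d);\Q)$, dual to the quotient $\Q[c_1,\dots,c_d]\to\Q[c_1,\dots,c_{d-r}]$), prove the easy direction by $c_k(TN)=0$ for $k>d-r$, and finish the other direction by the same dimension count via conjugation of partitions. The only cosmetic difference is that you organize the argument around the \emph{image} of the forgetful map while the paper phrases it as the \emph{kernel} of the obstruction map $\gamma^r$ to the cofiber $\overline{\MTU}(d-r)$; your identification of the ideal $(c_{d-r+1},\dots,c_d)_{2d}$ with $\mathrm{span}\{s_\omega:\ell(\omega)>d-r\}$ is slightly more direct than the paper's, which uses the forward direction contrapositively to get one containment before counting.
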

	
	The characteristic classes $s_\omega$ and the proof of this Theorem are given in section 4. B\"okstedt and Svane defined vector field cobordism in \cite{bokstedt2}. We extend their definition to \textit{complex section cobordism}, which we split into even and odd dimensional cases. 
	
	\begin{definition}[Definition \ref{evencsdef}] \label{evencsdefintro}
		Two $2d$ dimensional manifolds $M$ and $N$ with almost complex structure and $r$ complex sections on $TM$ and $TN$ are defined to be complex section cobordant, if there is a cobordism $W$ with boundary $M\cup \overline{N}$ such that $TW\oplus \R$ has complex structure and $r$ linearly independent complex sections compatible with the structures on $TM\oplus \C$ and $TN\oplus \C$. ($\overline{N}$ is  $N$ with the opposite orientation.) The even dimensional complex section cobordism groups are the equivalence classes under this relation. 
	\end{definition}
	
 \begin{definition}[Definition \ref{oddcsdef}] \label{oddcsdefintro}
	Two $2d-1$ dimensional manifolds $M$ and $N$ with complex structure and $r$ linearly independent complex sections on $TM\oplus \R$ and $TN\oplus \R$ are defined to be complex section cobordant, if there is a cobordism $W$ with boundary $M\cup \overline{N}$ such that $TW$ has complex structure and $r$ linearly independent complex sections compatible with the structures on $TM\oplus \R$ and $TN\oplus \R$. The odd dimensional complex section cobordism groups are the equivalence classes under this relation.
\end{definition}

	Note that a priori, even for $r=0$, these definitions are not the same as the definition of the classical complex cobordism groups, since the classical complex cobordism groups allow stabilization of arbitrarily high dimension.
	
	 We can describe how many distinct ways we can equip a cobordant manifold with the complex sections. This is equivalent to describing the kernel of the forgetful map from the complex section cobordism group to the complex cobordism groups. In section 4, we prove that this kernel is finite and is one of the homotopy groups of a particular spectrum. We note that this kernel is not necessarily finite in the oriented case. In particular, the forgetful map from the Reinhart cobordism group \cite{reinhart} to the oriented cobordism group is $\Z$ in even dimensions.
	
	\begin{theorem}[Corollary \ref{T2body}]\label{T2}
		Let $[M]\in \Omega_{2d}^U$, be such that $M$ can be equipped with $r$ linearly independent complex sections on $TM$. Then, there are only finitely many ways to equip a manifold $N\in [M]$ with $r$ linearly independent complex sections up to complex section cobordism. 
	\end{theorem}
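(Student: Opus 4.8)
The plan is to realize the even-dimensional complex section cobordism group $\Omega_{2d}^{U,cs,r}$ of Definition~\ref{evencsdefintro} as the homotopy group of a Thom spectrum, to exhibit the forgetful map to $\Omega_{2d}^U$ as induced by a map of spectra, and then to reduce finiteness of its kernel to a rational computation. First I would apply the Pontryagin--Thom construction to the relevant tangential structure — a stably complex $2d$-manifold $M$ whose tangent bundle is presented as $\C^r$ plus a genuine complex $(d-r)$-plane bundle $\eta$, i.e.\ a map $M\to BU(d-r)$ — to identify $\Omega_{2d}^{U,cs,r}\cong\pi_{2d}(\mathbf{Z})$ for the Madsen--Tillmann-type Thom spectrum $\mathbf{Z}=\mathrm{Th}\bigl(BU(d-r);\,-(\C^r\oplus\gamma_{d-r})\bigr)$; the one-step stabilization $TW\oplus\R$ in the cobordism relation is exactly what the passage to a virtual bundle records. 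Next I would observe that the forgetful map $\Omega_{2d}^{U,cs,r}\to\Omega_{2d}^U=\pi_{2d}(\MU)$, which remembers only the underlying stably complex manifold, is induced by a map of spectra $\phi\colon\mathbf{Z}\to\MU$ covered on classifying spaces by $BU(d-r)\to BU$, $\eta\mapsto\C^r\oplus\eta$. Since $BU(d-r)$, $BU$ and their Thom spaces are of finite type and bounded below, so is $\mathbf{Z}$, hence $\pi_{2d}(\mathbf{Z})$ is a finitely generated abelian group; so it would suffice to prove that $\phi_*\otimes\Q$ is injective.

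To do that I would rationalize. Because $H\Q$ is the rationalization of the sphere spectrum, there is a natural isomorphism $\pi_*(\mathbf{X})\otimes\Q\cong H_*(\mathbf{X};\Q)$ for every spectrum $\mathbf{X}$, so it would be enough to show that $\phi_*\colon H_{2d}(\mathbf{Z};\Q)\to H_{2d}(\MU;\Q)$ is injective, for which it suffices that $\phi^*\colon H^*(\MU;\Q)\to H^*(\mathbf{Z};\Q)$ be surjective. Since $\phi$ is covered by a map of virtual complex bundles it preserves Thom classes, so the Thom isomorphisms would identify $\phi^*$, up to a degree shift, with the restriction $H^*(BU;\Q)=\Q[c_1,c_2,\dots]\longrightarrow\Q[c_1,\dots,c_{d-r}]=H^*(BU(d-r);\Q)$ induced by $BU(d-r)\to BU$. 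As $c(\C^r\oplus\eta)=c(\eta)$, this map sends $c_i\mapsto c_i$ for $i\le d-r$ and $c_i\mapsto 0$ for $i>d-r$, and in particular it is surjective. Hence $\phi_*\otimes\Q$ is injective and $K:=\ker\phi_*$ is finite.

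Finally I would translate this back: if $M^{2d}$ carries $r$ linearly independent complex sections on $TM$ and $N$ lies in the complex cobordism class $[M]\in\Omega_{2d}^U$, then the complex section cobordism classes mapping to $[M]=[N]$ under $\phi_*$ form, when nonempty, a single coset of the finite group $K$, hence a finite set — and each such class is represented by a manifold in $[M]$ together with a choice of $r$ linearly independent complex sections, taken up to complex section cobordism; this is the theorem. I would also record that the same argument applies to the odd-dimensional groups of Definition~\ref{oddcsdefintro}, where $\pi_{2d-1}(\MU)=0$ and the relevant rational homology is concentrated in even degrees, so $\Omega_{2d-1}^{U,cs,r}$ is itself finite; and that the contrast with the oriented Reinhart case is visible at exactly this point, since replacing $U$ by $SO$ one compares $H^*(BSO;\Q)$ with $H^*(BSO(2m);\Q)=\Q[p_1,\dots,p_{m-1},e]$, and the extra Euler class $e$ destroys surjectivity of the restriction and produces the $\Z$ mentioned in the introduction. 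The step I expect to be the main obstacle is the first one: setting up $\mathbf{Z}$ and $\phi$ so that the structures of Definitions~\ref{evencsdefintro}--\ref{oddcsdefintro} are bookkept correctly — matching the one-step stabilization in the cobordism relation with the Madsen--Tillmann virtual bundle, relating the structure group of the boundary manifolds to that of the cobordisms, and checking that $\phi$ really preserves Thom classes once the virtual bundles are identified. After that, the rational computation above is routine.
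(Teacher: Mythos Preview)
Your proposal is correct and follows essentially the same approach as the paper: your spectrum $\mathbf{Z}$ is exactly the paper's $\MTU(d-r)$, your map $\phi$ is the paper's map $\MTU(d-r)\to\MU$, and your rational argument (surjectivity of $\Q[c_1,c_2,\ldots]\to\Q[c_1,\ldots,c_{d-r}]$ via the Thom isomorphism) is precisely what underlies the paper's Proposition~\ref{cohMTUbar}. The only cosmetic difference is packaging: the paper routes the computation through the cofiber $\overline{\MTU}(d-r)$, identifies the kernel as $\pi_{2d+1}(\overline{\MTU}(d-r))\cong\pi_{2d+1}(\MTU(d+1,r+1))$, and deduces finiteness from the vanishing of its odd rational cohomology, whereas you argue rational injectivity of $\phi_*$ directly---these are the two equivalent readings of the same long exact sequence, though note that the paper's route additionally names the indexing group, which the statement of Corollary~\ref{T2body} records.
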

	The ways of equipping a manifold with $r$ complex sections are indexed by a group which is described in section 4. In odd dimensions, it is well known that the complex cobordism group is zero, thus the odd dimensional complex section cobordism group parameterizes all the ways of equipping a manifold in the unique cobordism class with $r$ complex sections. The odd dimensional complex section cobordism group is in general more difficult to compute than the even dimensional complex section cobordism group, however we note that:
	
	\begin{theorem}[Proposition \ref{T3body}]\label{T3}
		The odd dimensional complex section cobordism group is finite.
	\end{theorem}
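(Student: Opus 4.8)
The plan is to reduce the claim to the finiteness of a single homotopy group of a Thom spectrum, and then to run the classical argument ``finitely generated $+$ rationally trivial $\Rightarrow$ finite''. By the identification of the complex section cobordism groups with homotopy groups of Madsen--Tillmann type Thom spectra carried out in Section~4, the odd-dimensional group $\Omega^{cs}_{2d-1}$ of $(2d-1)$-manifolds with $r$ complex sections is isomorphic to a homotopy group of a Thom spectrum $\mathbf{X}=\mathrm{Th}(-V)$ over $BU(d-r)$, where $V$ is (up to trivial summands) the pullback of the universal bundle; equivalently $\mathbf{X}$ is a suspension of $\mathrm{Th}(-\gamma_{d-r})$ for the universal rank-$(d-r)$ complex bundle $\gamma_{d-r}$ over $BU(d-r)$. (If $d<r$ the group is zero and there is nothing to prove, so assume $d\ge r$.) It thus suffices to show that the relevant homotopy group of $\mathbf{X}$ is finite.

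First I would record the standard structural facts about $\mathbf{X}$. Approximating $BU(d-r)$ by the Grassmannians $\mathrm{Gr}_{d-r}(\C^N)$ and using the splitting $\gamma_{d-r}\oplus\gamma_{d-r}^{\perp}\cong\underline{\C}^N$ exhibits $\mathbf{X}$ as a filtered colimit of even desuspensions of Thom spaces of complex bundles over compact manifolds; hence $\mathbf{X}$ is bounded below and has finitely generated homology in each degree (the Grassmannian inclusions being highly connected). The Atiyah--Hirzebruch spectral sequence $H_p(\mathbf{X};\pi_q(\bb{S}))\Rightarrow\pi_{p+q}(\mathbf{X})$ then has, in each fixed total degree, only finitely many nonzero entries, all finitely generated, so $\pi_*(\mathbf{X})$ is finitely generated in every degree. (This is the standard fact that a bounded-below spectrum with degreewise finitely generated homology has degreewise finitely generated homotopy.)

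Next I would compute the rationalization. For any spectrum $Y$ one has $\pi_*(Y)\otimes\Q\cong H_*(Y;\Q)$: smashing with the rational Moore spectrum, which coincides with $H\Q$, produces an $H\Q$-module, and any such module splits as a wedge of rational Eilenberg--MacLane spectra, so its homotopy is $H_*(Y;\Q)$. By the rational Thom isomorphism the rational homology of $\mathbf{X}$ is that of $BU(d-r)$ up to a degree shift; and since rationally the $\theta$-bordism group of $n$-manifolds is $H_n$ of the classifying space of the tangential structure (here $n=2d-1$ and the classifying space is $BU(d-r)$), this identifies $\Omega^{cs}_{2d-1}\otimes\Q\cong H_{2d-1}(BU(d-r);\Q)$. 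But $H^*(BU(k);\Q)$ is a polynomial algebra on generators in even degrees, so $H_{2d-1}(BU(d-r);\Q)=0$; hence $\Omega^{cs}_{2d-1}$ is rationally trivial.

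Combining the two steps, $\Omega^{cs}_{2d-1}$ is a finitely generated abelian group with vanishing rationalization, hence finite. The step requiring the most care is the degree bookkeeping in the rational computation: one must verify that the homotopy group representing $\Omega^{cs}_{2d-1}$ lands, after the Thom shift, in a degree of parity opposite to that of the (purely even) rational homology of $BU(d-r)$. This is exactly where the oddness of the manifold dimension enters, and it explains why the odd-dimensional case is always finite while the even-dimensional analogue need not be: there the Thom shift lands one in even-degree rational homology of $BU(d-r)$, which is nonzero --- matching the $\Z$ that appears for Reinhart cobordism in even dimensions. As a consistency check, when $d=r$ the bundle $\gamma_{d-r}$ has rank $0$ and $\mathbf{X}$ is a suspension of the sphere spectrum, so the theorem specializes to the finiteness of the positive-dimensional stable homotopy groups of spheres, and the argument above is precisely the usual proof of that fact.
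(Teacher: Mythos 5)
Your proof is correct and follows the same route as the paper's: identify the odd complex section cobordism group with $\pi_{2d-1}(\MTU(d-r))$, observe that the rational homotopy of this Madsen--Tillmann spectrum is concentrated in even degrees (degree-preserving Thom isomorphism plus the evenness of $H^*(BU(d-r);\Q)$), and combine rational triviality with finite generation to conclude finiteness. The only difference is that you spell out the finite-generation step via the Atiyah--Hirzebruch spectral sequence, whereas the paper leaves it implicit by appealing to the fact that $\MTU(d-r)$ is a spectrum of finite type.
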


	Our final question is finding multiplicative generators of the complex cobordism ring which can be equipped with $r$ complex sections. While we were unable to give an integral result stating which dimensions will have a generator with the obstruction vanishing, we showed that they could be found in rational cobordism.

\begin{theorem}[Theorem \ref{T4body}]\label{T4}
	Let $r< d$. There exists a manifold $M^{2d}$ in $\Omega_{2d}^U$ which can be equipped with $r$ linearly independent complex sections on $TM$ and whose image in $\Omega_{2d}^U\otimes \Q$ is a multiplicative generator.
\end{theorem}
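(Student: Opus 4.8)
The plan is to deduce Theorem~\ref{T4} from Theorem~\ref{ratobsintro} together with the rational structure theory of complex bordism, so that the whole statement reduces to the freedom of prescribing Chern numbers. Throughout I will use the identification $\Omega^U_*\otimes\mathbb{Q}\cong\mathbb{Q}[x_1,x_2,\dots]$ with $|x_i|=2i$ and two standard facts. First, a class $y\in\Omega^U_{2d}\otimes\mathbb{Q}$ is a multiplicative generator (i.e.\ may be chosen as the polynomial generator $x_d$) if and only if $s_{(d)}(y)\neq 0$: the Milnor number $s_{(d)}$ vanishes on decomposables, hence descends to a functional on the one-dimensional space of indecomposables in degree $2d$, and there it is nonzero since $s_{(d)}[\mathbb{CP}^d]=d+1$. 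Second, the assignment $[M]\mapsto(s_\omega[M])_{\omega\vdash d}$ is an isomorphism $\Omega^U_{2d}\otimes\mathbb{Q}\to\bigoplus_{\omega\vdash d}\mathbb{Q}$: both sides have dimension equal to the number $p(d)$ of partitions of $d$, complex bordism is rationally detected by Chern numbers, and the monomial symmetric functions $\{m_\omega\}_{\omega\vdash d}$ and the products of elementary symmetric functions $\{e_\omega\}_{\omega\vdash d}$ are both $\mathbb{Q}$-bases of the degree-$d$ symmetric functions, so the $s_\omega$ differ from the usual monomial Chern numbers by an invertible rational change of basis.

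Granting these, I would first use the second fact to choose a class $v\in\Omega^U_{2d}\otimes\mathbb{Q}$ with $s_{(d)}(v)=1$ and $s_\omega(v)=0$ for all $\omega\vdash d$ with $\omega\neq(d)$. Expressing $v$ as a rational combination of products of projective spaces and clearing denominators then yields an almost complex manifold $M_0^{2d}$ — a disjoint union of products $\mathbb{CP}^{n_1}\times\cdots\times\mathbb{CP}^{n_k}$ together with orientation-reversed copies representing the negative bordism classes — with $s_{(d)}[M_0]=q\neq 0$ and $s_\omega[M_0]=0$ for every $\omega\neq(d)$. Since $r<d$ we have $d-r\geq 1$, and the partition $(d)$ has length $1\leq d-r$, so $s_\omega[M_0]=0$ for all $\omega$ of length greater than $d-r$; hence Theorem~\ref{ratobsintro} applies to $M_0$ and produces a nonzero integer $c$ and a manifold $M^{2d}$ representing $c[M_0]$ and carrying $r$ linearly independent complex sections of $TM$. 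Finally $s_{(d)}[M]=c\,s_{(d)}[M_0]=cq\neq 0$, so by the first fact the image of $[M]$ in $\Omega^U_{2d}\otimes\mathbb{Q}$ is a multiplicative generator, and $M$ is the desired manifold.

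The step I expect to require the most care is making sure the bordism class handed to Theorem~\ref{ratobsintro} is represented by a manifold lying in the (almost) complex category for which that theorem is stated: products of projective spaces are complex and disjoint unions of almost complex manifolds are almost complex, so the only subtlety is realizing a negative complex bordism class, which one does by reversing the stabilized complex structure on a product (this flips the orientation and represents the negative of the class) — or, if preferred, by appealing to the fact that every complex bordism class contains an almost complex representative. The remaining ingredients — checking that the prescribed $M_0$ has the asserted Chern numbers, verifying the length condition, and passing from $v$ to an integral multiple — are routine, so the substance of the theorem is carried entirely by Theorem~\ref{ratobsintro}; I note as well that it is exactly the hypothesis $r<d$ that exempts the length-$1$ class $s_{(d)}$ from the vanishing condition, which is what allows a generator to survive.
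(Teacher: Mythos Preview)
Your proof is correct and follows essentially the same route as the paper: both arguments use that the $s_\omega$ form a rational basis of the dual of $\Omega^U_{2d}$, pick the class dual to $s_{(d)}$, clear denominators, apply the rational obstruction theorem (Theorem~\ref{ratobsintro}) using that the partition $(d)$ has length $1\le d-r$, and conclude via the Milnor-number criterion that the resulting manifold is a rational generator. Your version is somewhat more explicit about realizing the class by an honest almost complex manifold and about where the hypothesis $r<d$ enters, but the underlying argument is the same.
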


\subsection{Plan of the paper}

We start in section 2.1 by constructing the spectra $\MTU(d)$, and $\MTU(d,r)$. This constructions closely mirrors work done in \cite{bokstedt}. We present two constructions of the spectrum $\MTU(d)$. One of which is useful for calculating the cohomology and the other has clearer geometric features. Section 2.2 is devoted to compute the cohomology of those spaces and construct the colimit spectrum $\overline{\MTU}(d)$ based on a stabilization map. Section 3 describes the complex section cobordism category and provides the geometric definitions of the complex section cobordism groups. We use the results of Galatius, Tillmann, Madsen and Weiss \cite{galatius} to connect these groups to the homotopy groups of the spectrum $\MTU(d)$. This section is divided into two parts, one describing the odd dimensional and one describing the even dimensional case. We prove the main results of the paper in section 4 using the homotopy exact sequences.

{\bf Acknowledgments:} I would like to thank all the people who have helped me as I have completed this paper. First, I would like to thank the University of Oregon Department of Mathematics for supporting me during this work. I most especially thank my advisor, Boris Botvinnik, for the time he spent guiding and advising me throughout this project. I am also indebted to S\o ren Galatius for his valuable feedback on my results. Finally, I would like to thank the Young Topologists Meeting and the University of Copenhagen for giving me an opportunity to share an early version of this project.
\section{Construction of Spectra}
\subsection{The spectra MTU(d) and MTU(d,r)}
Here we introduce the spectra $\MTU(d)$ and $\MTU(d,r)$. The homotopy groups of these spectra are the natural objects of study. The real case is described in \cite{bokstedt} and \cite{galatius}. It should be noted that there is some disagreement about the proper indexing of these spectra in \cite{bokstedt} and \cite{galatius}. We follow the convention given in B\"okstedt, Dupont and Svane. \cite{bokstedt} Denote the complex Grassmannian of $d$ dimensional complex subspaces of
$\C^{d+n}$ by $G_\C(d,n)$. Let $U_{\C, d,n}\ra G_\C(d,n)$ be the tautological $d$
dimensional complex vector bundle and let $U_{\C, d,n}^\perp \ra
G_\C(d,n)$ be the $n$ dimensional complement of $U_{\C, d,n}$. 
	\begin{definition}
		Define $\MTU(d)$ to be the spectrum whose $2n$-th
                space is: \[\MTU(d)_{2n}= Th(U_{\C, d,n}^\perp)\]
	\end{definition}
There exists a
canonical map $G_\C(d,n)\ra G_\C(d,n+1)$ defined using the
composition: \[\C^d\ra \C^{d+n} \hookrightarrow \C \oplus \C^{d+n} \cong
\C^{d+n+1}\]
 The restriction of the bundle $U_{\C, d,n+1}^\perp$ to
$G_\C(d,n)$ under this map is $U_{\C, d,n}^\perp\oplus \C $. (Here $\C$ denotes a one dimensional complex trivial bundle.) In other words, there is a bundle map $U_{\C, d,n}^\perp\oplus \C\ra U_{\C,
  d,n+1}^\perp$ covering the map $G(d,n)\ra G(d,n+1)$. This map
induces a map of Thom spaces and the following composition gives the spectrum map $\Sigma^2 \MTU(d)_{2n}\ra \MTU(d)_{2n+2}$: 
\[\Sigma^2(Th(U_{\C,d,n}^\perp))\cong S^2\wedge Th(U_{\C, d,n}^\perp) \cong Th(\C \oplus
U_{\C, d,n}^\perp) \ra Th(U_{\C, d,n+1}^\perp)\]
There is also a map $G(d-r,n)\ra G(d,n)$ which takes a $(d-r)$-complex plane $P\subset \C^{d-r+n}$ to the $d$-plane $P\oplus \C^r \subset
\C^{d-r+n} \oplus \C^r$. Under this map, the pullback of $U_{\C, d,n}^\perp$
is $U_{\C, d-r,n}^\perp$. Thus, there are maps of Thom spaces
$\MTU(d-r)_{2n}\ra \MTU(d)_{2n}$. Since this map commutes with the
spectrum map, it defines a map of spectra:
\begin{equation}\label{s1-eq1}
\MTU(d-r)\ra \MTU(d)
\end{equation}  
\begin{definition}
	Let $\MTU(d,r)$ be the cofiber of the map (\ref{s1-eq1}).
\end{definition}
 We note that, having defined $\MTU(d,r)$, we immediately get a cofibration for $k\leq d-r$.
\begin{equation}
	\MTU(d-r,k)\ra \MTU(d,r+k) \ra \MTU(d,r)
\end{equation}
This cofibration reduces to the definition when $k=d-r$.

 	There is a second construction of spectrum $\MTU(d)$ due to \cite{bokstedt} which will be used in section 3 to study the complex section cobordism category.
 	
 	For any $d$-dimensional complex fiber bundle $E\ra X$ equipped with a Hermitian inner product, there is a complex frame bundle $V_{\C,r}(E)\ra X$ with fiber $V_{\C,d,r}$. The space  $V_{\C,d,r}$ is the frame manifold of $r$ ordered complex frames in $\C^d$. There is a related bundle $W_{\C,r}(E)\ra X$, whose fiber is the space of ordered $r$-tuples in $\C^d$ which are (hermitian) orthogonal and of the same length which may be between $0$ and $1$. The fiber $W_{\C,d,r}$ is the cone over $V_{\C,d,r}$, by construction.
 	
 	Now consider the specific case where the bundle is $U_{\C,d,n}\ra G_\C(d,n)$.
 	Elements of $V_{\C,r} (U_{\C,d,n})$ consist of a complex $d$ dimensional plane $P\subset \C^{d+n}$ along with an $r$ complex-frame in that plane. There is a map \[\eta^r:G_\C(d-r,n)\ra V_{\C,r} (U_{\C,d,n})\] which takes a $(d-r)$-plane $P\subset \C^{d-r+n}$ to $P\oplus \C^r \subset \C^{d-r+n}\oplus \C^r$ with $r$ frame consisting of the standard basis vectors of $\C^r$.
 	
 	We extend $\eta^r$ to a section $\eta:G_\C(d,n)\ra W_{\C,2r} (U_{\C,d,n})$ as follows. Let $\e>0$ be small and $N$ be the open neighborhood of $G_\C(d-r,n)$ of planes in $G_\C(d,n)$ which differ from a plane in $G_\C(d-r,n)$ by a rotation by angle less than $\e$.
 	
 	Let $P$ be a $d$-dimensional complex plane in $\overline{N}$, the closure of $N$. If $\e$ is sufficiently small, there is a unique shortest smooth rotation $A\in SU(d+n)$ from $P$ to a unique $P_0\in G_\C(d-r,n)$. (By shortest, we mean smallest angle.) Then choose a frame by applying the rotation $A^{-1}$ to $\eta^r(P_0)$. This will rotate the standard frame in $\C^r$ to be in $P$. If $\e$ is small, this map will be continuous. Define the section $\eta$ on $\ov{N}$ by taking plane $P$ and equipping it with this frame produced by the above rotation, scaling the frame by $(\e-\theta)/\e$ where $\theta$ is the angle of the rotation. Note that if $P\in G_\C(d-r,n)$, this process will give us the standard $r$ frame in $\C^r$. Moreover on the boundary of $N$, all $d$-planes are equipped by this map with the zero frame. (The zero frame is the frame with all vectors being zero vectors; it is the cone point of the fiber.) Thus we can extend this map to all of $G_\C(d,n)$ by mapping planes outside of $N$ to the plane equipped with the zero frame. This section $\eta: G_\C(d,n) \ra W_{\C,r} (U_{\C,d,n})$ is the right vertical map in the left diagram below.

 			\begin{center}
 				\begin{tikzcd}[column sep=large, row sep=huge]
 					V_{\C,r} (U_{\C,d,n})
 					\arrow[r]  
 					\arrow[d, leftarrow, "\eta^r"]& W_{\C,r} (U_{\C,d,n}) \arrow[d, leftarrow, "\eta"] \\
 					G_\C(d-r,n) \arrow[r] 	& G_\C(d,n)
 				\end{tikzcd}
 				\begin{tikzcd}[column sep=large, row sep=huge]
 					p_{V_{\C,r}}^* U_{\C,d,n}^\perp
 					\arrow[r]  
 					\arrow[d, leftarrow, "\eta^r"]& p_{W_{\C,r}}^* U_{\C,d,n}^\perp \arrow[d, leftarrow, "\eta"] \\
 					U_{\C,d-r,n}^\perp \arrow[r] 	& U_{\C,d,n}^\perp
 				\end{tikzcd}
 			\end{center}

 	The right commutative diagram lies over the first one where the maps $p_{V_{\C,r}}: V_{\C,r} (U_{\C,d,n})\ra G_\C(d,n)$ and $p_{W_{\C,r}}: W_{\C,r} (U_{\C,d,n})\ra G_\C(d,n)$ are the corresponding projections. Then we  have the Thom spaces $Th(p_{V_{\C,r}}^* U_{\C,d,n}^\perp)$ and $Th(p_{W_{\C,r}}^* U_{\C,d,n}^\perp)$ which we form into spectra $\MTU(d)_{V_r}$ and $\MTU(d)_{W_r}$ respectively. The maps $\eta^r$ and $\eta$ determine the following maps of spectra: $\MTU(d)_{V_r}\ra \MTU(d-r)$ and  $\MTU(d)_{W_r}\ra \MTU(d)$. The top maps in the above diagram induce a map of spaces $Th(p_{V_{\C,r}}^* U_{\C,d,n}^\perp) \ra Th(p_{W_{\C,r}}^* U_{\C,d,n}^\perp)$ and a map of spectra, $\MTU(d)_{V_r}\ra \MTU(d)_{W_r}$. From this map we form a cofiber which we call $\MTU'(d,r)$.
 	
 	\begin{proposition}\label{heq-1}
	In the below commutative diagram, all vertical maps are homotopy equivalences.
	
	\begin{center}
		\begin{tikzcd}[column sep=large, row sep=huge]
			\MTU(d-r)
			\arrow[r]  
			\arrow[d, leftarrow, "\eta^r"]& \MTU(d)	\arrow[r]  \arrow[d, leftarrow, "\eta"] & \MTU(d,r) \arrow[d, leftarrow, "\overline{\eta}"]\\
			\MTU(d)_{V_r} \arrow[r] 	& \MTU(d)_{W_r}\arrow[r] & \MTU'(d,r)
		\end{tikzcd}
	\end{center}
\end{proposition}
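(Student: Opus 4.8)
The plan is to verify the three vertical maps one at a time, treating $\eta$ first (which turns out to be an honest, levelwise homotopy equivalence), then $\eta^r$ (which needs a connectivity estimate), and finally $\overline{\eta}$ (which then follows formally from the other two and the five lemma).

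For $\eta$: by construction it is a section of the bundle projection $p_{W_{\C,r}}\colon W_{\C,r}(U_{\C,d,n})\to G_\C(d,n)$, whose fibre $W_{\C,d,r}$ is a cone and hence contractible, so $p_{W_{\C,r}}$ is a homotopy equivalence and the section $\eta$ is a homotopy inverse to it and therefore itself a homotopy equivalence of base spaces. Since $\eta$ is covered by a fibrewise isomorphism of the normal bundles appearing in the right-hand diagram, the induced map of Thom spaces at each level $2n$ is a homotopy equivalence, and these are compatible with the spectrum structure maps, giving $\MTU(d)_{W_r}\simeq \MTU(d)$.

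For $\eta^r$: the point is to recognise a second fibre bundle structure on $V_{\C,r}(U_{\C,d,n})$. Writing $V_{\C,r}(U_{\C,d,n})=U(d+n)/\bigl(U(d-r)\times U(n)\bigr)$ and sending a pair (plane, frame) to its underlying $r$-frame of $\C^{d+n}$ yields a fibre bundle $G_\C(d-r,n)\to V_{\C,r}(U_{\C,d,n})\to V_{\C,d+n,r}$ over the complex Stiefel manifold of $r$-frames in $\C^{d+n}$, with respect to which $\eta^r$ is exactly the inclusion of the fibre over the standard frame. Since $V_{\C,d+n,r}$ is $2(d+n-r)$-connected, the long exact sequence of this fibration shows $\eta^r$ is a $2(d+n-r)$-connected map; the base spaces being connected, the Thom isomorphism then shows the induced map of level-$2n$ Thom spaces, which underlies the spectrum map between $\MTU(d-r)$ and $\MTU(d)_{V_r}$, is $\bigl(2(d+n-r)+2n\bigr)$-connected. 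Evaluating $\pi_{k+2n}$ and taking the colimit over $n$, which computes $\pi_k$ of these spectra, produces an isomorphism for every $k$, so the map is a homotopy equivalence. I expect this to be the main obstacle: one must be sure the connectivity of the level-$2n$ map grows strictly faster in $n$ than the degree $k+2n$ at which it is probed, and it is precisely the rank shift $2n$ coming from the Thom isomorphism that secures this (unlike $\eta$, here one only gets a homotopy equivalence of spectra in the stable sense, not levelwise).

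For $\overline{\eta}$: by definition $\MTU(d,r)$ and $\MTU'(d,r)$ are the homotopy cofibres of the two horizontal maps of the square built from $\eta^r$ and $\eta$, a square that commutes by construction, and $\overline{\eta}$ is the induced map of cofibres. Since $\eta^r$ and $\eta$ are homotopy equivalences, comparing the two long exact sequences in homotopy groups of these cofibre sequences and invoking the five lemma shows that $\overline{\eta}$ is a homotopy equivalence as well. Throughout, the argument runs parallel to the treatment of the real spectra in \cite{bokstedt}, and no phenomenon special to the complex case intervenes.
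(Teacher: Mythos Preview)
Your proof is correct and follows essentially the same approach as the paper: both treat $\eta$ via contractibility of the fibre $W_{\C,d,r}$, both handle $\eta^r$ by recognising it as the fibre inclusion of the fibration $G_\C(d-r,n)\to V_{\C,r}(U_{\C,d,n})\to V_{\C,d+n,r}$ and using the connectivity of the Stiefel base together with the Thom isomorphism to get a connectivity estimate that improves with $n$, and both deduce $\overline{\eta}$ from the five lemma. Your connectivity bound for $V_{\C,d+n,r}$ is in fact one better than the paper's (it is $2(d+n-r)$-connected, not merely $2(d+n-r)-1$-connected), but this makes no difference to the argument.
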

\begin{proof}
	We know the section $\eta:G_\C(d,n) \ra W_{\C,r}(U_{\C,d,n})$ is a homotopy inverse of $p_{W_{\C,r}}$ because $W_{\C,r}(U_{\C,d,n})$ has contractible fibers. There is a fiber bundle \[V_{\C,r}(U_{\C,d,n}) \ra V_{\C,d+n,r}\]
	The total space $V_{\C,r}(U_{\C,d,n})$ consists of a $d$ dimensional plane in $\C^{d+n}$ and an $r$-frame in that plane. The projection forgets the plane leaving only the frame. For a given frame in $V_{\C,d+n,r}$, the fiber consists of any $d$ dimensional complex plane containing the frame. This is equivalent to choosing a $(d-r)$-plane in the orthogonal complement of the frame. The orthogonal complement is $\C^{d+n-r}$, so the fiber is $G_\C(d-r,n)$. 
	The fiber inclusion takes a complex plane $P^{d-r} \sb \C^{n+d-r}$ to the plane $P^{d-r} \oplus \C^r \sb \C^{n+d}$ equipped with the frame of the $r$ standard basis vectors in $\C^r$. This is the map $\eta^r$.
	
	We know the base of the fibration, $V_{\C, d+n,r}$, is $(2n+2d-2r-1)$ connected and thus the pair of spaces $(V_{\C,r}(U_{\C,d,n}),G_\C(d-r,n))$ has the same connectivity. By the Thom isomorphism theorem, we find the pair $(Th(p_{V_{\C,r}}^* U_{\C,d,n}^\perp), Th(U_{\C,d-r,n}^\perp) )$ is $(4n+2d-2r-1)$ connected.
	
	Letting $n\ra \infty$, we see that $\eta^r: \MTU(d)_{V_r}\ra \MTU(d-r)$ is a homotopy equivalence. By the five lemma, the right vertical map is also a homotopy equivalence.
\end{proof}

\subsection{Cohomology of the spectra}

We next compute the cohomology of the spectra $\MTU(d)$ and $\MTU(d,r)$. For the rest of this section, all cohomology will be assumed to have $\Z$ coefficients.

\begin{proposition}\label{cohMTU}
	There is an isomorphism:
	\[\phi:\Z[c_1,c_2,...,c_d]\cong H^*(BU(d);\Z)\ra H^*(\MTU(d);\Z)\]
	
	where $H^*(\MTU(d);\Z)$ is considered as a $H^*(BU(d);\Z)$ module. 
\end{proposition}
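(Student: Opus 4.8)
The plan is to prove the isomorphism level by level with the Thom isomorphism theorem, then pass to the stable inverse limit, keeping track of the $H^*(BU(d))$-module structure throughout.

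\textbf{Level-wise Thom isomorphism.} First I would apply the Thom isomorphism to each space $\MTU(d)_{2n}=Th(U_{\C,d,n}^\perp)$: this is the Thom space of a complex, hence canonically oriented, bundle of complex rank $n$ over $G_\C(d,n)$, so cup product with the Thom class $u_n\in\tilde H^{2n}(Th(U_{\C,d,n}^\perp))$ gives an isomorphism $H^{*-2n}(G_\C(d,n))\xra{\cong}\tilde H^{*}(Th(U_{\C,d,n}^\perp))$ of modules over $H^*(G_\C(d,n))$, with $u_n$ a free generator (the module structure on the target being pulled back along the homotopy equivalence $D(U_{\C,d,n}^\perp)\to G_\C(d,n)$). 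I would also recall the classical presentation $H^*(G_\C(d,n))=\Z[c_1,\dots,c_d,\bar c_1,\dots,\bar c_n]/I_n$, where $I_n$ is generated by the homogeneous components of $(1+c_1+\cdots+c_d)(1+\bar c_1+\cdots+\bar c_n)-1$ with $c_i=c_i(U_{\C,d,n})$ and $\bar c_j=c_j(U_{\C,d,n}^\perp)$; solving these relations in degrees $\le 2n$ writes each $\bar c_j$ as a polynomial in the $c_i$ and identifies $H^*(G_\C(d,n))$, through degree $2n$, with $\Z[c_1,\dots,c_d]$, compatibly with restriction along the inclusions $G_\C(d,n)\hookrightarrow G_\C(d,n+1)$.

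\textbf{Compatibility with the structure maps.} Next I would check that, under these Thom isomorphisms, the inverse system $\{\tilde H^{*+2n}(\MTU(d)_{2n})\}$ that computes $H^*(\MTU(d))$ is carried to the system $\{H^*(G_\C(d,n))\}$ with its cohomology restriction maps. The spectrum map $\Sigma^2\MTU(d)_{2n}\to\MTU(d)_{2n+2}$ was constructed in Section 2.1 from the bundle map $U_{\C,d,n}^\perp\oplus\C\to U_{\C,d,n+1}^\perp$ covering the Grassmannian inclusion; since the Thom class of a Whitney sum is the external product of Thom classes, and the Thom class of the trivial line bundle $\C$ is the canonical generator of $\tilde H^2(S^2)$, this map carries $u_{n+1}$ to $\sigma^2u_n$, and it covers the inclusion on bases, so the induced map sends $a\cdot u_{n+1}$ to $(\iota^*a)\cdot u_n$, where $\iota:G_\C(d,n)\hookrightarrow G_\C(d,n+1)$ denotes the inclusion. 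That is exactly cohomology restriction, which fixes the $c_i$, fixes $\bar c_j$ for $j\le n$, and kills $\bar c_{n+1}$; in particular it respects the low-degree identifications with $\Z[c_1,\dots,c_d]$ from the previous step.

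\textbf{Passing to the limit and the module structure.} Then, via the Milnor $\lim^1$ sequence, $H^m(\MTU(d))\cong\lim_n\tilde H^{m+2n}(\MTU(d)_{2n})\cong\lim_n H^m(G_\C(d,n))$ once the $\lim^1$ term vanishes. It does vanish, because in each fixed degree $m$ the system $\{H^m(G_\C(d,n))\}$ is eventually constant: the restriction $H^m(G_\C(d,n+1))\to H^m(G_\C(d,n))$ is an isomorphism once $2n\ge m$, since the cells attached to $G_\C(d,n)$ to form $G_\C(d,n+1)$ all have dimension $\ge 2n+2$. Hence $\lim_n H^m(G_\C(d,n))=H^m(BU(d))$, the degree-$m$ summand of $\Z[c_1,\dots,c_d]$, and I would take $\phi$ to be the inverse of this composite isomorphism --- equivalently, $\phi$ is the stable Thom isomorphism of the virtual bundle $-U_d$ over $BU(d)$, with $\phi(1)=u$ the stable Thom class. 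Since each level-wise Thom isomorphism is $H^*(G_\C(d,n))$-linear, and the module structures in play are all restricted from $H^*(BU(d))$ along the ring maps $H^*(BU(d))\to H^*(G_\C(d,n))$, the limiting isomorphism $\phi$ is $H^*(BU(d))$-linear, which is the assertion.

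\textbf{Main obstacle.} There is no genuine homotopy-theoretic obstacle: the whole argument is the Thom isomorphism theorem together with the classical cohomology of complex Grassmannians, assembled over $n$. The steps that need real care are bookkeeping ones --- the degree shift $*\mapsto *-2n$ in each Thom isomorphism, the verification that the spectrum's structure maps identify the Thom classes $\sigma^2u_n$ and $u_{n+1}$ (so that the inverse system really is the untwisted Grassmannian restriction system), and, for the module statement, confirming that the $H^*(BU(d))$-module structure on $H^*(\MTU(d))$ is exactly the one induced by the bundle projections, which is what makes $H^*(BU(d))$-linearity of $\phi$ automatic from linearity of each finite-stage Thom isomorphism.
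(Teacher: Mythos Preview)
Your proposal is correct and follows essentially the same approach as the paper: apply the Thom isomorphism to each space $Th(U_{\C,d,n}^\perp)$ and pass to the limit to obtain a stable Thom class generating $H^*(\MTU(d))$ freely over $H^*(BU(d))$. The paper's proof is a two-sentence sketch of exactly this, whereas you have filled in the bookkeeping (compatibility of Thom classes under the structure maps, vanishing of $\lim^1$, and the $H^*(BU(d))$-linearity) that the paper leaves implicit.
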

\begin{proof}
	There is a Thom class $\ov{u}_{d,n}\in H^{2n}(Th(U^\perp_{\C,d,n}))$ associated to the bundle $U^\perp_{\C,d,n}$. In the $2n$-th space, this defines a Thom isomorphism $H^*(G_{\C,d,n}) \ra H^{*+2n}(Th(U^\perp_{\C,d,n}))$. In the limit we get a stable Thom class $\ov{u}_d \in H^0(\MTU(d))$. The Thom isomorphism theorem states that $H^*(\MTU(d))$ is the rank 1 free module over $H^*(BU(d))$ generated by the Thom class.
\end{proof}

\begin{theorem}\label{map}
	The map $H^*(\MTU(d,r))\ra H^*(\MTU(d))$ is injective with image the $H^*(BU(d))$ module generated by $\phi(c_{d-r+1}),...,\phi(c_d)$.
	
\end{theorem}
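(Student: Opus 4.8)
The plan is to apply cohomology to the defining cofiber sequence $\MTU(d-r)\xra{f}\MTU(d)\to\MTU(d,r)$ and show that the resulting long exact sequence degenerates into short exact sequences. The first — and essentially only — step is to identify the map $f^*\colon H^*(\MTU(d))\to H^*(\MTU(d-r))$. Recall $f$ is induced by the inclusion $\iota\colon G_\C(d-r,n)\to G_\C(d,n)$, $P\mapsto P\oplus\C^r$, under which $U_{\C,d,n}^\perp$ pulls back to $U_{\C,d-r,n}^\perp$; consequently $f$ sends the stable Thom class $\ov{u}_d$ to $\ov{u}_{d-r}$. On the base spaces, $\iota$ classifies the bundle $U_{\C,d-r,n}\oplus\C^r$, so the induced ring map $\iota^*\colon H^*(BU(d))\to H^*(BU(d-r))$ satisfies $\iota^*c_i=c_i$ for $i\le d-r$ and $\iota^*c_i=0$ for $i>d-r$. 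Since the Thom isomorphisms of Proposition \ref{cohMTU} are isomorphisms of modules over $H^*(BU(\cdot))$, and $f$ respects Thom classes, $f^*$ is the $\iota^*$-semilinear map determined by $f^*(x\cdot\ov{u}_d)=(\iota^*x)\cdot\ov{u}_{d-r}$. In particular $f^*$ is surjective in every degree: $H^*(\MTU(d-r))$ is the free $H^*(BU(d-r))$-module on $\ov{u}_{d-r}$, and every element of $H^*(BU(d-r))=\Z[c_1,\dots,c_{d-r}]$ lifts along $\iota^*$.

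Because $f^*$ is onto in each degree, the connecting homomorphisms of the long exact cohomology sequence vanish, and it breaks up into short exact sequences
\[0\to H^*(\MTU(d,r))\to H^*(\MTU(d))\xra{f^*}H^*(\MTU(d-r))\to 0.\]
(One may also note that by Proposition \ref{cohMTU} both outer groups are concentrated in even degrees, which again forces the connecting maps to zero.) This proves that $H^*(\MTU(d,r))\to H^*(\MTU(d))$ is injective with image $\ker f^*$, and it remains to compute that kernel.

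Using Proposition \ref{cohMTU} to write a general element of $H^*(\MTU(d))$ as $\phi(p(c_1,\dots,c_d))=p(c_1,\dots,c_d)\cdot\ov{u}_d$, the formula for $f^*$ gives $f^*(\phi(p))=p(c_1,\dots,c_{d-r},0,\dots,0)\cdot\ov{u}_{d-r}$. Hence $\phi(p)\in\ker f^*$ if and only if $p(c_1,\dots,c_{d-r},0,\dots,0)=0$ in $\Z[c_1,\dots,c_{d-r}]$, i.e. if and only if $p$ lies in the ideal $(c_{d-r+1},\dots,c_d)\sb\Z[c_1,\dots,c_d]$. Transporting this back through $\phi$, we conclude that the image of $H^*(\MTU(d,r))$ in $H^*(\MTU(d))$ is exactly the $H^*(BU(d))$-submodule generated by $\phi(c_{d-r+1}),\dots,\phi(c_d)$, as claimed. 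The only delicate point in the argument is the bookkeeping that makes $f^*$ computable, namely the compatibility of the two Thom isomorphisms with $f$ (Thom class to Thom class) and their $H^*(BU(\cdot))$-linearity; once this is in place the statement is a fact about the polynomial ring $\Z[c_1,\dots,c_d]$.
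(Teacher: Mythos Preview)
Your proof is correct and follows essentially the same approach as the paper: both apply cohomology to the cofiber sequence, reduce via the Thom isomorphisms to the map $H^*(BU(d))\to H^*(BU(d-r))$ sending $c_i\mapsto c_i$ for $i\le d-r$, and identify the kernel as the ideal $(c_{d-r+1},\dots,c_d)$. The only cosmetic difference is that the paper packages the comparison in a commutative diagram using the relative Thom isomorphism for the pair $(G_\C(d,n),G_\C(d-r,n))$, whereas you argue directly with the surjectivity of $f^*$ and the resulting short exact sequence; the content is the same.
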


\begin{proof}
	Observe the following commutative diagram:
	
	\begin{center}
		\begin{tikzpicture}
			\node (v4) at (-2,-0.5) {$H^k(G_\C(d, n))$};
			\node (v5) at (1.7,-0.5) {$H^k(G_\C(d - r, n))$};
			\node (v2) at (-2,1.5) {$\tilde{H}^{k+2n}(Th(U^\perp_{\C, d,n}))$};
			\node (v6) at (-6.5,-0.5) {$H^k(G_\C(d, n), G_\C(d - r, n))$ };
			\node (v1) at (-6.5,1.5) {$H^{k+2n}(Th(U^\perp_{\C, d,n}, Th(U^\perp_{\C, d-r,n}))$};
			\node (v3) at (1.7,1.5) {$\tilde{H}^{k+2n}(Th(U^\perp_{\C, d-r,n}))$};
			\draw [->] (v1) edge (v2);
			\draw [->] (v4) edge (v5);
			\draw [->] (v6) edge (v4);
			\draw [->] (v4) edge (v2);
			\draw [->] (v5) edge (v3);
			\draw [->] (v6) edge (v1);
			\draw [->] (v2) edge (v3);
		\end{tikzpicture}
	\end{center}

	The horizontal maps come from the exact sequence in cohomology. As $n\ra \infty$ the bottom right map becomes $\Z[c_1,..c_d]\cong H^k(BU(d))\ra H^k(BU(d-r))\cong \Z[c_1,...,c_{d-r}]$. This map is a surjection mapping $c_i\ra c_i$ for $i\leq d-r$. So  $H^*(BU(d),BU(d-r))$ is the kernel, namely the $BU(d)$ submodule generated by $c_{d-r+1},...,c_d$.
	
	The vertical maps are the Thom isomorphisms. Thus we conclude that the map $H^*(\MTU(d,r))\ra H^*(\MTU(d))$ is injective with image the $H^*(BU(d))$ submodule generated by $\phi(c_{d-r+1}),...,\phi(c_d)$. 
\end{proof}	
\begin{corollary}\label{connected}
	The spectrum $\MTU(d,r)$ is $(2(d-r)+1)$ connected.
\end{corollary}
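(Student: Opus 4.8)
The plan is to combine Theorem~\ref{map}, which identifies $H^*(\MTU(d,r))$ with a submodule of $H^*(\MTU(d))$, with the Hurewicz theorem, once we have checked that $\MTU(d,r)$ is a connective spectrum.

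First I would record the connectivity of the building blocks. For each $m$ and $n$ the space $\MTU(m)_{2n}=Th(U_{\C,m,n}^\perp)$ is the Thom space of a rank-$2n$ real vector bundle over the path-connected base $G_\C(m,n)$, hence is $(2n-1)$-connected; passing to the spectrum, $\pi_k(\MTU(m))=0$ for $k<0$, so $\MTU(m)$ is connective for every $m$. Since $\MTU(d,r)$ is, by definition, the cofiber of the map $\MTU(d-r)\to\MTU(d)$ of equation (\ref{s1-eq1}), the long exact sequence of homotopy groups shows that $\MTU(d,r)$ is connective as well.

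Next I would pin down the lowest degree in which $H^*(\MTU(d,r))$ is nonzero. By Theorem~\ref{map}, $H^*(\MTU(d,r);\Z)$ maps isomorphically onto the $H^*(BU(d);\Z)$-submodule of $H^*(\MTU(d);\Z)$ generated by $\phi(c_{d-r+1}),\dots,\phi(c_d)$. Because $\phi$ is degree-preserving and carries $1$ to the Thom class in degree $0$, the class $\phi(c_i)$ sits in degree $2i$, so every element of this submodule has degree $\geq 2(d-r+1)=2(d-r)+2$. Hence $H^k(\MTU(d,r);\Z)=0$ for $k\leq 2(d-r)+1$. The same argument applies verbatim with field coefficients, since the proof of Theorem~\ref{map} only invokes the Thom isomorphism and the fact that $H^*(BU(m))$ is a polynomial algebra on the Chern classes, both valid over any field; thus $H^k(\MTU(d,r);\F)=0$ for $k\leq 2(d-r)+1$ for every field $\F$, and by universal coefficients $H_k(\MTU(d,r);\Z)=0$ for $k\leq 2(d-r)+1$ too. (Equivalently, one may read this directly off the homology Thom isomorphism: $H_*(\MTU(d,r);\Z)$ is the cokernel of the split-injective stabilization map $H_*(BU(d-r);\Z)\to H_*(BU(d);\Z)$, which is spanned by the basis duals of the monomials in $c_1,\dots,c_d$ involving some $c_i$ with $i>d-r$, and so vanishes below degree $2(d-r)+2$.)

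Finally I would apply the Hurewicz theorem to the connective spectrum $\MTU(d,r)$: its lowest nonzero homotopy group lies in the same degree as its lowest nonzero integral homology group, which by the previous step is at least $2(d-r)+2$. Therefore $\pi_k(\MTU(d,r))=0$ for all $k\leq 2(d-r)+1$, i.e.\ $\MTU(d,r)$ is $(2(d-r)+1)$-connected. The only point requiring any care is the passage from vanishing cohomology to vanishing homology; everything else is formal. I expect to dispatch it either by running the computation of Theorem~\ref{map} over all fields and applying universal coefficients as above, or by noting that $H^*(\MTU(d,r);\Z)$, being a subgroup of the free group $H^*(\MTU(d);\Z)\cong H^*(BU(d);\Z)$, is free, so that for this finite-type spectrum integral homology is dual to integral cohomology and the two vanish in the same range.
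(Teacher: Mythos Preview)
Your proposal is correct and matches the paper's intended argument: the paper states this as an immediate corollary of Theorem~\ref{map} without writing a proof, and in the very next result (Theorem~\ref{stab}) explicitly invokes the Hurewicz theorem to read off the connectivity of $\MTU(d+k,k)$ from that cohomology computation. Your write-up is simply a careful unpacking of this implicit reasoning, including the (necessary) check that $\MTU(d,r)$ is connective and the passage from vanishing integral cohomology to vanishing integral homology via freeness and finite type.
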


As a corollary, there is a stabilization isomorphism for $\MTU(d,r)$
\begin{theorem} \label{stab}
	$\pi_q(\MTU(d,r))\cong \pi_q(\MTU(d+k,r+k))$ for $q\leq 2d$. 
\end{theorem}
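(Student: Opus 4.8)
The plan is to realize $\MTU(d,r)$ and $\MTU(d+k,r+k)$ as two consecutive terms of a single cofiber sequence whose third term is highly connected, and then read off the isomorphism from the associated long exact sequence of homotopy groups.

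First I would build that cofiber sequence. Both stabilization maps $\MTU(d-r)\to\MTU(d)$ and $\MTU(d)\to\MTU(d+k)$ are of the form (\ref{s1-eq1}), and their composite $\MTU(d-r)\to\MTU(d+k)$ is again of that form: on Grassmannians it sends a plane $P$ to $P\oplus\C^{r}$ and then to $P\oplus\C^{r+k}$. By the definition of the spectra $\MTU(-,-)$, the cofiber of the first map is $\MTU(d,r)$, the cofiber of the second is $\MTU(d+k,k)$, and the cofiber of the composite is $\MTU(d+k,r+k)$. Applying the octahedral axiom to this composable pair (exactly as in the derivation of the cofibration $\MTU(d-r,k)\to\MTU(d,r+k)\to\MTU(d,r)$ given above) therefore produces a cofiber sequence of spectra
\[\MTU(d,r)\longrightarrow \MTU(d+k,r+k)\longrightarrow \MTU(d+k,k).\]

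Next I would invoke connectivity. By Corollary \ref{connected}, applied with parameters $d+k$ and $k$, the spectrum $\MTU(d+k,k)$ is $\bigl(2((d+k)-k)+1\bigr)=(2d+1)$-connected, so $\pi_q(\MTU(d+k,k))=0$ for all $q\le 2d+1$. Feeding the cofiber sequence into the long exact sequence of homotopy groups gives, for each $q$, the exact segment
\[\pi_{q+1}(\MTU(d+k,k))\to \pi_q(\MTU(d,r))\to \pi_q(\MTU(d+k,r+k))\to \pi_q(\MTU(d+k,k)).\]
When $q\le 2d$ we have $q\le 2d+1$ and $q+1\le 2d+1$, so both outer groups vanish and the middle map is an isomorphism, which is exactly the assertion of the theorem.

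I do not expect any genuine obstacle here; the only points needing care are the bookkeeping in the octahedral axiom — verifying that the composite $\MTU(d-r)\to\MTU(d+k)$ really is the defining map of $\MTU(d+k,r+k)$, which holds since $(P\oplus\C^{r})\oplus\C^{k}=P\oplus\C^{r+k}$ — and extracting the correct connectivity bound $2d+1$ from Corollary \ref{connected} by substituting $d\mapsto d+k$, $r\mapsto k$ rather than reading off $2(d-r)+1$.
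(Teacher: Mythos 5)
Your proof is correct and takes essentially the same route as the paper: both use the octahedral cofiber sequence $\MTU(d,r)\to\MTU(d+k,r+k)\to\MTU(d+k,k)$ together with the $(2d+1)$-connectivity of $\MTU(d+k,k)$ from Corollary \ref{connected} to deduce the isomorphism from the long exact sequence. The paper simply writes down the exact sequence and the connectivity without spelling out the octahedral construction, which you make explicit.
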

\begin{proof}
	There is a homotopy exact sequence:
	\[
		\pi_{q+1}(\MTU(d+k,k))\ra \pi_q(\MTU(d,r))\\ \ra \pi_q(\MTU(d+k,r+k)) \ra \pi_q (\MTU(d+k,k))\]

	By the Hurewicz theorem, $\MTU(d+k,k)$ is $2(d+k-k)+1$ connected. So there is an isomorphism, $\pi_q(\MTU(d,r))\cong \pi_q(\MTU(d+k,r+k))$ for $q\leq 2d$. 
\end{proof}

We will connect the homotopy groups of $\MTU(d)$ to the homotopy groups of $\MU$, which are the complex cobordism groups. There is a sequence of spectra:
\[ \MTU(d)\ra \MTU(d+1) \ra \MTU(d+2)\ra ...\]

Let us call the colimit of this spectrum $\MTU$; this spectrum is the Thom spectrum of the bundle $U^\perp_\C$ over $BU$. There is an inversion map $BU\ra BU$ which takes a vector space to its complement. This map is covered by a bundle map $U^\perp_\C\ra U_\C$ which defines a homotopy equivalence $\MTU\ra \MU$. (The real version of this construction is described in \cite{ebert} among others.) In particular, there is a well defined map $\MTU(d)\ra \MU$. The computation in Theorem \ref{map} implies that the map $\MTU(d)\ra \MU$ is $2d+1$ connected. (The map on cohomology is the quotient $\Z[c_1,..,c_d,...]\ra \Z[c_1,...,c_d]$.) We extend the results on $\MTU(d,r)$ to a new spectrum, which will be more convenient for the calculations in section 4.

\begin{definition}
	Let $\overline{\MTU}(d)$ be the cofiber of the map $\MTU(d)\ra \MU$.
\end{definition}

\begin{proposition}\label{cohMTUbar}
	The map $H^*(\overline{\MTU}(d)),\Z)\ra H^*(\MU,\Z)$ is injective. Specifically $H^*(\overline{\MTU}(d)),\Z)$ is torsion-free and has non-zero cohomology only in even degrees.
\end{proposition}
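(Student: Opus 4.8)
The plan is to feed the defining cofiber sequence $\MTU(d)\ra \MU \ra \overline{\MTU}(d)$ into cohomology and combine it with the computation recalled just above the statement. Applying $H^*(-;\Z)$ to this cofiber sequence yields a long exact sequence
\[
\cdots \ra H^n(\overline{\MTU}(d)) \ra H^n(\MU) \xra{\rho} H^n(\MTU(d)) \xra{\delta} H^{n+1}(\overline{\MTU}(d)) \ra \cdots ,
\]
in which $\rho$ is the map induced by $\MTU(d)\ra \MU$. By the discussion preceding the Proposition --- that is, Theorem \ref{map} together with the relevant Thom isomorphisms, and the identification $H^*(\MU;\Z)\cong \Z[c_1,c_2,\dots]$ (with $\deg c_i = 2i$) coming from the Thom isomorphism as in Proposition \ref{cohMTU} --- the homomorphism $\rho$ is the quotient $\Z[c_1,c_2,\dots] \ra \Z[c_1,\dots,c_d]$ sending $c_i \mapsto c_i$ for $i\leq d$ and $c_i\mapsto 0$ for $i>d$. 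In particular $\rho$ is surjective in every degree.

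Next I would observe that degreewise surjectivity of $\rho$ forces the connecting homomorphism $\delta$ to vanish identically. The long exact sequence therefore splits into short exact sequences
\[
0 \ra H^n(\overline{\MTU}(d)) \ra H^n(\MU) \xra{\rho} H^n(\MTU(d)) \ra 0
\]
for every $n$. This gives the injectivity assertion at once, and it identifies $H^*(\overline{\MTU}(d))$ with $\ker\rho$, namely with the ideal $(c_{d+1},c_{d+2},\dots)$ inside $\Z[c_1,c_2,\dots]\cong H^*(\MU;\Z)$.

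Finally, the two structural claims are read off from this identification: $H^*(\MU;\Z)\cong \Z[c_1,c_2,\dots]$ is a free abelian group, and it is concentrated in even degrees since each $c_i$ sits in degree $2i$; hence its subgroup $\ker\rho \cong H^*(\overline{\MTU}(d))$ is torsion-free and also concentrated in even degrees. The only step that needs care rather than genuine work is the degreewise surjectivity of $\rho$, which is precisely the parenthetical remark before the statement and follows from Theorem \ref{map}; the remainder of the argument is formal homological algebra on the cofiber sequence.
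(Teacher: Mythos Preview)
Your proof is correct and follows essentially the same approach as the paper: both arguments run the long exact sequence of the cofiber $\MTU(d)\ra \MU \ra \overline{\MTU}(d)$ and use that $H^*(\MU)\ra H^*(\MTU(d))$ is the surjective quotient $\Z[c_1,c_2,\dots]\ra\Z[c_1,\dots,c_d]$. The only cosmetic difference is that the paper splits into the cases $q$ even and $q$ odd (using $H^{q-1}(\MTU(d))=0$ in the first case and surjectivity of $\rho$ in the second), whereas you observe uniformly that $\rho$ is surjective in every degree; your identification of $H^*(\overline{\MTU}(d))$ with the ideal $(c_{d+1},c_{d+2},\dots)$ is a small bonus beyond what the paper states.
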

\begin{proof}
	There is an exact sequence: \[H^{q-1}(\MU,\Z)\ra H^{q-1}(\MTU(d))\ra H^q(\overline{\MTU}(d)),\Z)\ra H^q(\MU,\Z)\ra H^q(\MTU(d))\]
	
	If $q$ is even, then $H^{q-1}(\MTU(d))$ is 0 so the map is injective. If $q$ is odd, then the induced map $H^{q-1}(\MU,\Z)\ra H^{q-1}(\MTU(d))$ coincides with $\Z[c_1,c_2,...]\ra \Z[c_1,...,c_d]$ and is surjective. It immediately follows that $H^q(\overline{\MTU}(d)),\Z)\ra H^q(\MU,\Z)$ is injective.
\end{proof}

There is an alternative interpretation of $\overline{\MTU}(d)$ as the colimit of the sequence of spectra: \[ \MTU(d+1,1)\ra \MTU(d+2,2) \ra ...\]
The following commutative diagram makes it clear that these definitions are the same.
\begin{center}
	
	\begin{tikzpicture}
		\node (v1) at (-3,-1) {$\MTU(d)$};
		\node (v2) at (0,-1) {$\MTU(d+1)$};
		\node (v3) at (3,-1) {$\MTU(d+1,1)$};
		\draw [->] (v1) edge (v2);
		\draw [->] (v2) edge (v3);
		\node (v5) at (0,0) {$\MTU(d+2)$};
		\node (v9) at (3,0) {$\MTU(d+2,2)$};
		\node (v4) at (0,1) {\vdots};
		\node (v8) at (3,1) {\vdots};
		
		\node (v6) at (0,2) {$\MU$};
		\node (v7) at (3,2) {$\overline{\MTU}(d)$};
		\draw [->] (v1) edge (v6);
		\draw [->] (v1) edge (v5);
		\draw [->] (v6) edge (v7);
		\draw [->] (v8) edge (v7);
		\draw [->] (v9) edge (v8);
		\draw [->] (v5) edge (v4);
		\draw [->] (v4) edge (v6);
		\draw [->] (v3) edge (v9);
		\draw [->] (v2) edge (v5);
		\draw [->] (v5) edge (v9);
	\end{tikzpicture}
\end{center}
As a corollary of Theorem \ref{stab}, \begin{corollary}\label{stabcor} There is an isomorphism for all $d$ and $r$:
	\[\pi_{2d+1}(\MTU(d+1,r+1))\cong \pi_{2d+1}(\overline{\MTU}(d-r))\]
\end{corollary}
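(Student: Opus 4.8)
The plan is to derive this purely formally from the stabilization isomorphism of Theorem~\ref{stab} together with the description of $\overline{\MTU}(d-r)$ as a sequential colimit. First I would recall, from the commutative diagram displayed immediately before the statement, that $\overline{\MTU}(d-r)$ is the colimit of the sequence
\[
\MTU(d-r+1,1)\ra \MTU(d-r+2,2)\ra \MTU(d-r+3,3)\ra\cdots,
\]
whose $j$-th term is $\MTU\big((d-r)+j,\,j\big)$. Since homotopy groups of spectra commute with sequential homotopy colimits, this gives
\[
\pi_{2d+1}(\overline{\MTU}(d-r))\;\cong\;\varinjlim_{j}\,\pi_{2d+1}\big(\MTU((d-r)+j,\,j)\big).
\]

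Next I would exploit the fact that this colimit is unchanged if we pass to a cofinal tail of the indexing sequence. I would take the tail starting at $j=r+1$: its $k$-th term, for $k\ge 0$, is $\MTU\big((d-r)+(r+1+k),\,r+1+k\big)=\MTU(d+1+k,\,r+1+k)$, and its initial term (at $k=0$) is exactly $\MTU(d+1,r+1)$. Thus $\pi_{2d+1}(\overline{\MTU}(d-r))$ is the colimit of the sequence $\pi_{2d+1}(\MTU(d+1+k,\,r+1+k))$, $k\ge 0$, with the transition maps induced by the stabilization maps.

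Finally I would invoke Theorem~\ref{stab} with first index $d+1+k$, second index $r+1+k$, and degree $q=2d+1$. The hypothesis of that theorem reads $q\le 2(d+1+k)$, i.e. $2d+1\le 2d+2+2k$, which holds for every $k\ge 0$; hence each transition map
\[
\pi_{2d+1}\big(\MTU(d+1+k,\,r+1+k)\big)\ra \pi_{2d+1}\big(\MTU(d+2+k,\,r+2+k)\big)
\]
is an isomorphism. A colimit of a sequence of isomorphisms equals its first term, so $\pi_{2d+1}(\overline{\MTU}(d-r))\cong\pi_{2d+1}(\MTU(d+1,r+1))$, as claimed.

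There is no serious obstacle here, since this is genuinely a corollary; the only point I would want to state explicitly rather than merely invoke is that $\pi_*$ commutes with the sequential (homotopy) colimit of spectra defining $\overline{\MTU}(d-r)$. Everything else is bookkeeping: matching the shift $(d,r)\mapsto(d+1+k,r+1+k)$ against the indexing $j\mapsto((d-r)+j,j)$ of the colimit and checking that the range of validity in Theorem~\ref{stab} covers the single degree $2d+1$ for all terms of the relevant tail.
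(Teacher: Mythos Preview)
Your argument is correct and is exactly the intended one: the paper merely records this as an immediate corollary of Theorem~\ref{stab}, and you have spelled out the bookkeeping (cofinal tail of the colimit diagram, verification that $2d+1\le 2(d+1+k)$, and the fact that $\pi_*$ commutes with the sequential colimit) that makes that claim precise. There is nothing to add.
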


This corollary will be helpful when we want to compute the homotopy groups $\pi_{2d+1}(\overline{\MTU}(d-r))$. As a further consequence of Corollary \ref{connected}, we conclude the following lemma.

\begin{lemma} \label{iso}
	There is an isomorphism $\pi_{q}(\MTU(d))\cong \pi_{q}(\MU)$ for $q\leq 2d$. 
\end{lemma}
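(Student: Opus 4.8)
The plan is to read the lemma off the cofibre sequence $\MTU(d)\to\MU\to\overline{\MTU}(d)$, using that the cofibre $\overline{\MTU}(d)$ is highly connected — a fact which is essentially Corollary \ref{connected} in disguise. So the one substantive input is a connectivity estimate for $\overline{\MTU}(d)$; everything else is the long exact sequence of a cofibre sequence.

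First I would establish that $\overline{\MTU}(d)$ is $(2d+1)$-connected, i.e. $\pi_q(\overline{\MTU}(d))=0$ for $q\le 2d+1$. Here I would invoke the identification of $\overline{\MTU}(d)$ with the sequential colimit $\MTU(d+1,1)\to\MTU(d+2,2)\to\cdots$ exhibited by the commutative diagram above. Each term $\MTU(d+k,k)$ is $(2((d+k)-k)+1)=(2d+1)$-connected by Corollary \ref{connected}, and crucially this bound is independent of $k$; since homotopy groups commute with sequential homotopy colimits of spectra, $\pi_q(\overline{\MTU}(d))=\operatorname{colim}_k\pi_q(\MTU(d+k,k))=0$ for all $q\le 2d+1$. (Alternatively, one may argue purely homologically: by the proof of Theorem \ref{map} and Proposition \ref{cohMTUbar}, the image of $H^*(\overline{\MTU}(d))\hookrightarrow H^*(\MU)$ is the ideal $(c_{d+1},c_{d+2},\dots)$, which is zero below degree $2d+2$; as $H^*(\overline{\MTU}(d))$ is torsion-free this forces the reduced homology, hence by Hurewicz the homotopy, of the connective spectrum $\overline{\MTU}(d)$ to vanish through degree $2d+1$.)

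Next I would feed this into the long exact sequence of homotopy groups for the cofibre sequence $\MTU(d)\to\MU\to\overline{\MTU}(d)$:
\[\cdots\to\pi_{q+1}(\overline{\MTU}(d))\to\pi_q(\MTU(d))\to\pi_q(\MU)\to\pi_q(\overline{\MTU}(d))\to\cdots\]
For $q\le 2d$ we have $q+1\le 2d+1$, so both $\pi_{q+1}(\overline{\MTU}(d))$ and $\pi_q(\overline{\MTU}(d))$ vanish by the previous step, and exactness forces the middle map $\pi_q(\MTU(d))\to\pi_q(\MU)$ to be an isomorphism; this is precisely the map induced by $\MTU(d)\to\MU$, which is the content of the lemma.

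I do not expect a genuine obstacle: the whole statement is packaged inside Corollary \ref{connected} (equivalently, the cohomology computation of Theorem \ref{map}), and indeed the text already records that $\MTU(d)\to\MU$ is $(2d+1)$-connected, which gives the lemma at once. The only point requiring a little care is the bookkeeping that taking the colimit over $k$ does not degrade the $(2d+1)$-connectivity — and that is automatic because the connectivity bound $2((d+k)-k)+1=2d+1$ does not depend on $k$.
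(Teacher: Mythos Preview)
Your argument is correct and is essentially the paper's own reasoning unpacked: the paper states the lemma without a written proof, deducing it as ``a further consequence of Corollary~\ref{connected}'' after having already observed that the map $\MTU(d)\to\MU$ is $(2d+1)$-connected from the cohomology computation in Theorem~\ref{map}. Your cofibre-sequence argument with the colimit connectivity of $\overline{\MTU}(d)$ is exactly how one justifies that sentence.
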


\section{The cobordism category}
In order to provide a geometric interpretation for the spectra constructed in the previous section, we discuss cobordism categories. These categories were originally studied by \cite{galatius} and \cite{bokstedt2}. Here we specify what happens in the complex case. We will need to reference the spectrum $\mathbf{MTO}(d)$, which is the real analogue for $\mathbf{MTU}(d)$. For more details on its specific construction see \cite{galatius} and \cite{bokstedt2}. In order to describe the complex section cobordism category, we consider the even and odd dimensional cases separately. We start with the general cobordism category with tangential structure $\theta$.

\begin{definition}\label{cobcat}
	Let $\theta:X\ra BO(d)$ be a fibration. The cobordism category $\mathcal{C}^\theta_{d,n+d}$ has as objects $(d-1)$ dimensional manifolds $M\subset\ (-1,1)^{n+d-1} \subset \R^{n+d-1}$ with $M$ closed in $\R^{n+d-1}$ without boundary along with the data of a chosen lift of the classifying map $\xi:M\ra G(d-1,n) \ra G(d,n)$ to $\theta^*(G(d,n))$. The space of morphisms from $M_0\ra M_1$ is the disjoint union of the identity morphism along with pairs $(W,a)$ with the following properties. $W\sb(-1,1)^{n+d-1}\times \R \subset \R^{n+d-1} \times \R $ is a manifold of dimension $d$, which is closed in $\R^{n+d}$, $a\in (0,\infty)$ and for some $\e>0$:
	\[W \cap (\R^{n+d-1}\times (-\infty,\e)) =M_0\times (-\infty,\e)\]
	\[W \cap (\R^{n+d-1}\times (a-\e,\infty)) =M_1\times (a-\e,\infty)\]
	Additionally $W$ is equipped with a chosen lift of the classifying map $W\ra G(d,n)$ to $\theta^*(G(d,n))$ which is compatible with the structures on the cobordant manifolds.
\end{definition}

In this paper, we will take the limit as $n\ra \infty$ of the categories $\mathcal{C}^\theta_{d,n+d}$ and write it as $\mathcal{C}^\theta_{d}$. Its classifying space will be denoted by $BC^\theta_{d}$. In the complex case $X=BU(d)$ and $\theta: BU(d)\ra BO(2d)$. So the $U(d)$ cobordism category has as objects $2d-1$ dimensional manifolds $M$ with $U(d)$ structure on $TM\oplus \R$ and has as morphisms $2d$ dimensional cobordisms with $U(d)$ structure.

\subsection{The even dimensional complex section cobordism category}

We describe next the complex section cobordism category, which will be obtained by additionally requiring the objects and morphisms to be equipped with linearly independent complex sections. In particular, this corresponds to Definition \ref{cobcat}, with the tangential structure given by the map $\theta_{\C,r}: V_{\C,r}(U_{\C,d})\ra BO(2d)$  which is defined by the following composition:

\begin{center}
	\begin{tikzcd}[column sep=large, row sep=huge]
		V_{\C,r}(U_{\C,d}) \arrow[r]\arrow[d, "i_r"] & V_{r}(U_{2d}) \arrow[d, "i_r"]\\
		BU(d)	\arrow[r]  & BO(2d)
	\end{tikzcd}
\end{center} 

The map $ V_{\C,r}(U_{\C,d}) \ra V_{r}(U_{2d})$ is the natural map which forgets the complex structure. (Note that this diagram is not a pull back square.)
The category $\mathcal{C}^{\theta_{\C,r}}_{2d}$ with this tangential structure has as objects $2d-1$ dimensional manifolds $M$ with a complex structure on the bundle $TM\oplus \R$ and $r$ linearly independent complex sections. A morphism $W: M \ra N$ is a $2d$ dimensional almost complex cobordism equipped with $r$ linearly independent complex sections such that the structures are compatible as given in the definition below. We define two manifolds to be complex section cobordant if there is a morphism between them in this category.

\begin{definition}\label{oddcsdef}
	Two $2d-1$ dimensional manifolds $M$ and $N$ with complex structure and $r$ linearly independent complex sections on $TM\oplus \R$ and $TN\oplus \R$ are defined to be complex section cobordant, if there is a cobordism $W$ with boundary $M\cup \overline{N}$ such that $TW$ has complex structure and $r$ linearly independent complex sections compatible with the structures on $TM\oplus \R$ and $TN\oplus \R$. 
\end{definition}

\begin{proposition}\label{oddeqrel}
	The relation defined in Definition \ref{oddcsdef} is an equivalence relation. 
\end{proposition}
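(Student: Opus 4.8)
The plan is to check the three defining properties of an equivalence relation — reflexivity, symmetry, and transitivity — by producing in each case an explicit cobordism carrying the required complex structure and $r$ complex sections. Throughout I would work with the product-collar description of cobordisms coming from Definition \ref{cobcat}: near each boundary component the cobordism, its tangent bundle, its complex structure, and its sections are all pulled back from the boundary, and this is precisely what ``compatible with the structures on $TM\oplus\R$ and $TN\oplus\R$'' is meant to record. For reflexivity, given $M$ with complex structure $J$ on $TM\oplus\R$ and linearly independent complex sections $s_1,\dots,s_r$, take $W=M\times[0,1]$. Since $TW\cong\pi^*(TM\oplus\R)$ for the projection $\pi\colon M\times[0,1]\ra M$, the pulled-back data $(\pi^*J,\pi^*s_1,\dots,\pi^*s_r)$ is a complex structure with $r$ linearly independent complex sections on $TW$ restricting to $(J,s_1,\dots,s_r)$ over both $M\times\{0\}$ and $M\times\{1\}$, so $\partial W=M\cup\ov{M}$ witnesses $M\sim M$. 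For symmetry, if $W$ witnesses $M\sim N$ then reversing the cobordism — formally applying the reflection $t\mapsto 1-t$ in the $\R$-direction of $(-1,1)^{n+2d-1}\times\R$, equivalently passing to $\ov{W}$ — gives a cobordism with boundary $N\cup\ov{M}$; the reflection is a diffeomorphism and transports the complex structure and the sections, the only subtlety being that it swaps the two collars and reverses the normal $\R$-factor, which is exactly cancelled by the opposite-orientation bookkeeping in $\ov{(-)}$, so the structures near $N$ and near $M$ are again the given ones. Hence $N\sim M$.

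For transitivity, suppose $W_1$ witnesses $M\sim N$ (boundary $M\cup\ov N$) and $W_2$ witnesses $N\sim P$ (boundary $N\cup\ov P$). I would glue along $N$ to form $W=W_1\cup_N W_2$. On the overlapping collar $N\times(-\e,\e)$ both complex structures and both $r$-tuples of sections are, by the compatibility hypothesis, pulled back from the common structure on $TN\oplus\R$, so they agree there; after rounding the corner of $W_1\cup_N W_2$ — a modification supported in a small neighbourhood of $N$, realized by a homotopy of complex structures and of the section tuple, using that the relevant spaces of complex structures and of complex $r$-frames are path-connected in the range in question — one obtains a smooth $2d$-manifold $W$ with complex structure and $r$ linearly independent complex sections on $TW$ that restrict to the given data near $M$ and near $P$. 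Thus $\partial W=M\cup\ov P$ witnesses $M\sim P$.

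The main obstacle is the transitivity step, and specifically two points inside it: one must arrange that the collar data of $W_1$ near $\ov N$ and of $W_2$ near $N$ actually \emph{match} (not merely agree up to homotopy) so that the glued object is well defined, and one must check that rounding the corner disturbs neither the integrability-free complex structure nor the linear independence of the sections. Both are handled by passing to genuine product collars and absorbing any discrepancy into the homotopy freedom already built into the word ``compatible''. The reflexivity and symmetry steps are then essentially formal bookkeeping. An alternative, more streamlined organization would phrase transitivity as composition of morphisms in the cobordism category $\mathcal{C}^{\theta_{\C,r}}_{2d}$ and reflexivity as its identity morphisms, so that the gluing-and-smoothing construction is performed once at the level of the category; symmetry, which is not automatic from the categorical structure, would still be proved by turning cobordisms around as above.
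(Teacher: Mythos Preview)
Your proposal locates the difficulty in the wrong place. The paper dispatches reflexivity and transitivity in a single sentence (``obviously reflexive and transitive'') and devotes the entire proof to symmetry, whereas you call symmetry ``essentially formal bookkeeping'' and flag transitivity as ``the main obstacle.''

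The gap is in your symmetry argument. You claim that reversing the normal $\R$-factor ``is exactly cancelled by the opposite-orientation bookkeeping in $\ov{(-)}$.'' That is not so: the operation $\ov{(-)}$ reverses the orientation of the $(2d-1)$-manifold but does nothing to the fixed complex structure or the chosen $r$-frame on $TN\oplus\R$. After reflecting $W$ via $t\mapsto a-t$, the collar identification $T\tilde W|_N\cong TN\oplus\R$ uses $-\partial_t$ in place of $\partial_t$, so the $r$-frame that $\tilde W$ induces on $TN\oplus\R$ is not the given one; the paper records this as ``the complex sections will be reversed on the boundary.'' No orientation convention undoes it.

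The paper's fix is the actual content of the proof: glue cylinders $M\times[0,1]$ and $N\times[0,1]$ onto the ends of the reversed cobordism, carrying the rotated frame $\tilde\nu(x,t)=(e^{\pi i t}\nu(x),t)$. Because $e^{\pi i t}$ is a unit complex scalar, this stays inside $V_{\C,r}(TM\oplus\R)$ for all $t$ and interpolates from $\nu$ to $-\nu$. This step genuinely uses the complex structure --- there is no analogous one-line homotopy from a real frame to its negative --- and it is precisely what your argument omits. Your transitivity discussion, by contrast, is more elaborate than needed: with product collars the two cobordisms concatenate on the nose, and the ``rounding the corner'' and path-connectedness appeals are unnecessary here.
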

\begin{proof}
	This relation is obviously reflexive and transitive. It remains to show that it is symmetric. Suppose $M$ and $N$ are $2d-1$ dimensional manifolds with complex structure and $r$ linearly independent complex sections on $TM\oplus \R$ and $TN\oplus \R$ and suppose that there is a cobordism $W$ with boundary $M\cup \overline{N}$ such that $TW$ has complex structure and $r$ linearly independent complex sections compatible with the structures on $TM\oplus \R$ and $TN\oplus \R$. 
	
	We may reverse $W$ and get a complex cobordism between $N$ and $M$, which I will call $\tilde{W}$. However the complex sections will be reversed on the boundary. We will correct this reversal by showing there exists a cobordism reversing the sections. Let $\nu: M \ra V_{\C,r}(TM\oplus \R)$ be a map representing complex sections and $-\nu$ be their reverse. Define a section $\tilde{\nu}: M\times [0,1] \ra V_{\C,r}(TM\times \R)\cong  V_{\C,r}(TM\oplus \R) \times [0,1] $ by the formula $\tilde{\nu}(x,t)= ( e^{\pi it}\nu(x),t)$. This section restricts to $\nu$ on the incoming boundary and $-\nu$ on the outgoing boundary. We may perform a similar construction on $N$. Composing these cobordisms with $\tilde{W}$, we get a cobordism from $N$ to $M$ with the correct complex section structure.
\end{proof}

\begin{definition}
	The odd dimensional complex section cobordism groups are the equivalence classes under the above relation.
\end{definition}
We can interpret the complex section cobordism groups as the connected components of the classifying space $B\mathcal{C}^{\theta_{\C,r}}_{2d}$.
\begin{proposition}
	The connected components of $B\mathcal{C}^{\theta_{\C,r}}_{2d}$ are the equivalence classes under complex section cobordism.
\end{proposition}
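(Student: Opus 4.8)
The plan is to invoke the general theory of cobordism categories developed by Galatius--Madsen--Tillmann--Weiss, specialized to the tangential structure $\theta_{\C,r}$. The statement to be proved is that $\pi_0 B\mathcal{C}^{\theta_{\C,r}}_{2d}$ coincides with the set of complex section cobordism classes. Since $B\mathcal{C}^{\theta_{\C,r}}_{2d}$ is the classifying space of a topological category, its set of path components is the quotient of the set of objects by the equivalence relation generated by "there exists a morphism between them". So the proof reduces to two things: (i) verifying that the objects of $\mathcal{C}^{\theta_{\C,r}}_{2d}$ are exactly (representatives of) $2d-1$ dimensional closed manifolds with a complex structure on $TM \oplus \R$ together with $r$ linearly independent complex sections — which is immediate from Definition \ref{cobcat} together with the identification of the lift data along $\theta_{\C,r}: V_{\C,r}(U_{\C,d}) \ra BO(2d)$ as precisely a $U(d)$-structure on $TM \oplus \R$ plus an $r$-frame; and (ii) verifying that the equivalence relation generated by morphisms is exactly the complex section cobordism relation of Definition \ref{oddcsdef}.

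First I would spell out (i): a point of $\theta_{\C,r}^*(G(d,n))$ over a point of $G(d,n)$ is, by the definition of the pullback and the composite defining $\theta_{\C,r}$, a choice of complex structure on the corresponding $2d$-plane compatible with the given real $2d$-plane, together with an ordered $r$-tuple of $\C$-linearly independent vectors in that complex plane; a lift of the classifying map $M \ra G(d-1,n) \ra G(d,n)$ to this space is then precisely the data of a $U(d)$-structure on $TM \oplus \R$ and $r$ complex sections of it. So the objects match. Then for (ii): a morphism $W: M_0 \ra M_1$ in $\mathcal{C}^{\theta_{\C,r}}_{2d}$ is, after discarding the collar and rescaling coordinates, a compact $2d$-dimensional cobordism from $M_0$ to $M_1$ equipped with a lift of its classifying map to $\theta_{\C,r}^*(G(d,n))$ compatible with the boundary lifts — that is, exactly an almost complex structure on $TW$ plus $r$ complex sections restricting correctly on $\p W = M_0 \sqcup \overline{M_1}$. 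This is literally Definition \ref{oddcsdef}. Hence the relation "$\exists$ morphism $M_0 \ra M_1$" is the complex section cobordism relation, which is already an equivalence relation by Proposition \ref{oddeqrel}, so no further generation is needed.

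The one genuine point requiring care — and the step I expect to be the main obstacle — is the passage to the limit $n \ra \infty$ and the claim that this limit does not change $\pi_0$, i.e. that isotoping/embedding a given abstract cobordism into $(-1,1)^{n+d-1} \times \R$ for sufficiently large $n$ is always possible and that the choice of embedding does not affect the component. This is the standard fact that for $n$ large any smooth compact manifold embeds in Euclidean space and any two such embeddings are isotopic, so the classifying spaces $B\mathcal{C}^{\theta_{\C,r}}_{2d,n+d}$ stabilize and $\pi_0 B\mathcal{C}^{\theta_{\C,r}}_{2d} = \varinjlim_n \pi_0 B\mathcal{C}^{\theta_{\C,r}}_{2d,n+d}$; moreover closedness in $\R^{n+d-1}$ is automatic for compact manifolds. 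One must also check that the collar conditions in Definition \ref{cobcat} impose no real constraint: any cobordism with boundary can be given a product collar, and the tangential lift can be made product-like near the boundary by a homotopy, since the inclusion of the boundary is a cofibration and $\theta_{\C,r}$ is a fibration. With these standard manoeuvres in place the identification of $\pi_0$ with complex section cobordism classes follows formally, and I would write the proof as: "By definition of the classifying space of a topological category, $\pi_0 B\mathcal{C}^{\theta_{\C,r}}_{2d}$ is the set of objects modulo the equivalence relation generated by morphisms; unwinding Definitions \ref{cobcat} and \ref{oddcsdef} and using that every compact cobordism embeds with a product collar in $(-1,1)^{n+d-1}\times\R$ for $n$ large, this relation is exactly complex section cobordism, which is an equivalence relation by Proposition \ref{oddeqrel}."
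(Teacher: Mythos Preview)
Your proposal is correct and follows essentially the same approach as the paper: both identify $\pi_0 B\mathcal{C}^{\theta_{\C,r}}_{2d}$ with objects modulo the equivalence relation generated by morphisms (the paper phrases this as a ``zig-zag of morphisms'' and cites Theorem~3.4 of \cite{bokstedt2}), and both invoke Proposition~\ref{oddeqrel} to conclude that this generated relation is already the complex section cobordism relation. Your treatment is more explicit about the embedding, collar, and $n\to\infty$ issues, which the paper absorbs into the cited reference, but the logical skeleton is the same.
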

\begin{proof}
	If two manifolds are equivalent, then there is a morphism in $\mathcal{C}^{\theta_{\C,r}}_{2d}$ connecting them and so they are in the same connected component of $B\mathcal{C}^{\theta_{\C,r}}_{2d}$. If two manifolds are in the same connected component of $B\mathcal{C}^{\theta_{\C,r}}_{2d}$, then there is a zig zag of morphisms connecting them by Theorem 3.4 of \cite{bokstedt2}. By Proposition \ref{oddeqrel}, this means that the two manifolds are complex section cobordant.
\end{proof}

\begin{remark}
	Note that we index the cobordism category by the dimension of the cobordisms but we index the cobordism groups by the dimension of the object manifolds.
\end{remark}

Next we modify the results from \cite{bokstedt2}, specializing to the complex case in order to connect these groups to the Thom spectra constructed in Section 2. This primarily relies on the calculation of the homotopy type of the cobordism category in \cite{galatius}.

For any structure $\theta: X \ra BO(d)$, we define $\theta^* \mathbf{MTO}(d)$ to be the spectrum whose $n$-th space is given by $Th(\theta^* U_{d,n}^\perp)$. In particular, for $\theta: BU(d)\ra BO(2d)$, we observe that $\theta^*(\mathbf{MTO}(2d))=\MTU(d)$. For the structure $p_{V_{\C,r}}: V_{\C,r}(U_{\C,2d}) \ra BU(d)$, we observe that $p_{V_{\C,r}}^* \MTU(d)$ is, by construction, $\MTU(d)_{V_r}$. We proved in Theorem \ref{heq-1} that $\MTU(d)_{V_r} \cong \MTU(d-r)$. Note that $\theta_{\C,r}=\theta \circ p_{V_{\C,r}}$ where the map $\theta_{\C,r}$ is as above. The next theorem from \cite{galatius} and its corollary will allow us to study and compute the complex section cobordism groups by studying the homotopy groups of the Thom spectrum $\MTU(d)$. 
\begin{theorem}
	The spaces $B\mathcal{C}^{\theta_{\C,r}}_{2d}$ and $\Omega^{\infty+2d-1} \MTU(d-r)$ are weakly homotopy equivalent.
\end{theorem}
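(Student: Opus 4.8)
The plan is to obtain this statement as the almost-complex-with-$r$-complex-sections specialization of the Galatius--Madsen--Tillmann--Weiss theorem, assembled from the identifications made just above together with Proposition \ref{heq-1}. The first step is to record precisely that $\mathcal{C}^{\theta_{\C,r}}_{2d}$ is an instance of the cobordism category of Definition \ref{cobcat}: it is $\mathcal{C}^{\theta}_{2d}$ for the tangential structure $\theta=\theta_{\C,r}\colon V_{\C,r}(U_{\C,d})\ra BO(2d)$, after replacing $\theta_{\C,r}$ by a Serre fibration in the standard way. This amounts to checking the dictionary between the two descriptions of the data: a $\theta_{\C,r}$-lift of the (once-stabilized) Gauss map of a $2d$-dimensional morphism $W$ is the same as a lift to $BU(d)$, i.e.\ a complex structure on $TW$, together with a further lift along $p_{V_{\C,r}}$, i.e.\ a choice of $r$-frame in the tautological complex $d$-plane bundle, which is exactly $r$ linearly independent complex sections of $TW$; and likewise on objects $M$, where the stabilized Gauss map $M\ra G(2d-1,n)\ra G(2d,n)$ encodes a complex structure on $TM\oplus\R$ with $r$ complex sections. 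Thus the objects and morphisms of $\mathcal{C}^{\theta_{\C,r}}_{2d}$ are exactly those appearing in Definition \ref{oddcsdef}.

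With this in place, the main theorem of \cite{galatius} (in the $\theta$-twisted form used in \cite{bokstedt2}, and with the indexing conventions of \cite{bokstedt}) supplies a weak homotopy equivalence
\[
B\mathcal{C}^{\theta_{\C,r}}_{2d}\;\simeq\;\Omega^{\infty+2d-1}\bigl(\theta_{\C,r}^*\mathbf{MTO}(2d)\bigr),
\]
and it remains only to identify the spectrum on the right. Since $\theta_{\C,r}=\theta\circ p_{V_{\C,r}}$ with $\theta\colon BU(d)\ra BO(2d)$, and pullback of Thom spectra along a map of base spaces is functorial, one has $\theta_{\C,r}^*\mathbf{MTO}(2d)=p_{V_{\C,r}}^*\bigl(\theta^*\mathbf{MTO}(2d)\bigr)=p_{V_{\C,r}}^*\MTU(d)=\MTU(d)_{V_r}$, using $\theta^*\mathbf{MTO}(2d)=\MTU(d)$ and the construction of $\MTU(d)_{V_r}$. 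By Proposition \ref{heq-1} the map $\MTU(d)_{V_r}\ra\MTU(d-r)$ induced by $\eta^r$ is a weak equivalence, and since $\Omega^{\infty+2d-1}$ preserves weak equivalences we get $\Omega^{\infty+2d-1}\bigl(\theta_{\C,r}^*\mathbf{MTO}(2d)\bigr)\simeq\Omega^{\infty+2d-1}\MTU(d-r)$. Composing these equivalences proves the theorem.

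The mathematical content here is entirely imported from \cite{galatius} (via \cite{bokstedt2}) and from Proposition \ref{heq-1}, so I expect the only points needing care to be bookkeeping. One is the dictionary of the first paragraph, in particular keeping track of the single stabilization from $TM$ to $TM\oplus\R$ built into Definition \ref{cobcat} --- this is precisely why, even for $r=0$, these groups are a priori not the classical complex cobordism groups. The other is the loop-space index: the ``$-1$'' is the one in the usual GMTW statement $B\mathcal{C}_\ell\simeq\Omega^{\infty-1}$ of the Madsen--Tillmann spectrum, while the additional ``$+2d$'' simply records that in the convention of \cite{bokstedt} adopted here the spectrum $\mathbf{MTO}(2d)$ (hence $\MTU(d)$ and $\MTU(d-r)$) is the Madsen--Tillmann spectrum shifted up so that its bottom cell lies in degree $0$. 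Once these conventions are fixed the argument is a formal assembly.
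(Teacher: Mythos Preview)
Your proof is correct and follows essentially the same route as the paper: apply the GMTW theorem to obtain $B\mathcal{C}^{\theta_{\C,r}}_{2d}\simeq\Omega^{\infty+2d-1}\theta_{\C,r}^*\mathbf{MTO}(2d)$, identify $\theta_{\C,r}^*\mathbf{MTO}(2d)$ with $\MTU(d)_{V_r}$, and invoke Proposition~\ref{heq-1} to pass to $\MTU(d-r)$. Your additional discussion of the dictionary and the indexing conventions is more detailed than the paper's version but does not change the argument.
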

\begin{proof}
	In \cite{galatius}, it was shown that there is a weak homotopy equivalence \[B\mathcal{C}^{\theta_{\C,r}}_{2d}\ra \Omega^{\infty+2d-1} \theta^*_{\C,r} \mathbf{MTO}(2d)\]
	
	By construction, $\theta^*_{\C,r} \mathbf{MTO}(d) \cong \MTU(d)_{V_r}$. Theorem \ref{heq-1} shows that $\MTU(d)_{V_r}$ is homotopy equivalent to $\MTU(d-r)$. It follows that: \[B\mathcal{C}^{\theta_{\C,r}}_{2d}\cong \Omega^{\infty+2d-1} \MTU(d-r)\]
\end{proof}
\begin{remark}
	The indexing of our spectra differs from the indexing in \cite{galatius}, leading to the $2d$ in the shift of the loop space $\Omega^{\infty+2d-1} \MTU(d-r)$.
\end{remark}

\begin{corollary}\label{oddiso}
	There is an isomorphism:
	$\pi_{2d+2r-1}(\MTU(d))\cong \pi_0 B\mathcal{C}^{\theta_{\C,r}}_{2(d+r)}$ 
\end{corollary}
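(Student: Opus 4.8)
This is a direct specialization of the preceding theorem. The plan is to apply the weak homotopy equivalence $B\mathcal{C}^{\theta_{\C,r}}_{2d'}\simeq \Omega^{\infty+2d'-1}\MTU(d'-r)$ with $d'$ taken to be $d+r$ rather than $d$. This is legitimate since the theorem holds for every nonnegative integer value of the lower index, and substituting $d'=d+r$ yields
\[
B\mathcal{C}^{\theta_{\C,r}}_{2(d+r)}\;\simeq\;\Omega^{\infty+2(d+r)-1}\MTU\big((d+r)-r\big)\;=\;\Omega^{\infty+2d+2r-1}\MTU(d).
\]

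Having this weak equivalence, the second step is simply to pass to $\pi_0$, which is preserved by weak homotopy equivalences. The third step is the standard identification of the homotopy groups of the infinite loop space of a shifted spectrum: for a spectrum $E$ and an integer $k$, one has $\pi_0\,\Omega^{\infty+k}E\cong\pi_k(E)$. Applying this with $E=\MTU(d)$ and $k=2d+2r-1$ gives
\[
\pi_0\,B\mathcal{C}^{\theta_{\C,r}}_{2(d+r)}\;\cong\;\pi_0\,\Omega^{\infty+2d+2r-1}\MTU(d)\;\cong\;\pi_{2d+2r-1}(\MTU(d)),
\]
which is the claimed isomorphism.

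There is essentially no obstacle here beyond bookkeeping: the only point requiring care is the indexing convention for the spectra (the source of the $2d'-1$ shift, as flagged in the remark following the theorem), so I would be explicit that the index $d'$ in that theorem ranges over all admissible values, in particular $d'=d+r$, and that the resulting spectrum $\MTU(d'-r)$ is then exactly $\MTU(d)$. No connectivity or stabilization input is needed for the statement as phrased, since it is an identity of homotopy groups rather than an isomorphism onto a cobordism group in a range; those refinements (via Lemma \ref{iso} and Corollary \ref{stabcor}) would only enter when one wants to reinterpret $\pi_{2d+2r-1}(\MTU(d))$ geometrically.
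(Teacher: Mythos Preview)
Your proposal is correct and is exactly the intended derivation: the paper states this as an immediate corollary of the preceding theorem, and your substitution $d\mapsto d+r$ followed by taking $\pi_0$ and using $\pi_0\,\Omega^{\infty+k}E\cong\pi_k(E)$ is precisely the one-line argument being left to the reader. There is nothing to add.
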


The above corollary classifies all odd homotopy groups of $\MTU(d)$. (The lower homotopy groups of $\MTU(d)$ are classified by Lemma \ref{iso}).

\begin{remark}
	We note that we are not truly finding sections on $M^{2d-1}$ but on $M^{2d-1}\times I$ and our cobordisms are equivalences of these cylinders not of the lower dimensional manifold.
\end{remark}

\subsection{The odd dimensional complex section cobordism category}

We define a complex cobordism theory with cobordisms of dimension $2d+1$ and objects of dimension $2d$. This is a specific case of Definition \ref{cobcat} with structure $V_{\C,r}(U_{\C,d+1})\ra BO(2d+2)$.

\begin{definition}
	The cobordism category $\mathcal{C}^{U}_{2d+1,2n+2d+1}$ has as objects, $2d$ dimensional manifolds without boundary $M\subset\ (-1,1)^{2n+2d} \subset \R^{2n+2d}$ with $M$ closed in $\R^{2n+2d}$ along with a chosen lift of the map $\xi:M\ra G(2d,2n)\ra G(2d+2,2n)$ to $V_{\C,r}(U_{\C,d+1,n})$. The morphisms from $M_0\ra M_1$ are the disjoint union of the identity morphism along with pairs $(W,a)$ with the following properties. The real number $a\in(0,\infty)$ and $W\sb(-1,1)^{2n+2d}\times \R \subset \R^{2n+2d} \times \R $ is a manifold of dimension $2d+1$, which is closed in $\R^{2n+2d+1}$ and such that for some $\e>0$:
	\[W \cap (\R^{2n+2d}\times (-\infty,\e)) =M_0\times (-\infty,\e)\]
	\[W \cap (\R^{2n+2d}\times (a-\e,\infty)) =M_1\times (a-\e,\infty)\]
	Additionally $W$ is equipped with the data of a chosen lift of the classifying map $W\ra G(2d+1,2n)\ra G(2d+2,2n)$ to  $V_{\C,r}(U_{\C,d+1,n})$ which is compatible with the structures on the cobordant manifolds.
\end{definition}

For more details on the definition see \cite{galatius2} and \cite{bokstedt2}.
We typically consider the limit as $n\ra \infty$ with and abbreviate the notation for this category as $\mathcal{C}^{U,r}_{2d+1}$. Unraveling the definition, objects are manifolds $M^{2d}$ with complex structure and $r$ linearly independent complex sections on $TM\oplus \R^2$. Cobordisms $W$ are $2d+1$ dimensional manifolds with a complex structure and $r$ linearly independent complex sections on $TW \oplus \R$, compatible with the structure on the cobordant manifolds. We can define even dimensional complex section cobordism in a similar way to the odd case. 

\begin{definition}\label{evencsdef}
	Two $2d$ dimensional manifolds $M$ and $N$ with almost complex structure and $r$ linearly independent complex sections on $TM\oplus \R^2$ and $TN\oplus \R^2$ are defined to be complex section cobordant, if there is a cobordism $W$ with boundary $M\cup \overline{N}$ such that $TW\oplus \R$ has complex structure and $r$ linearly independent complex sections compatible with the structures on $TM\oplus \R^2$ and $TN\oplus \R^2$.
\end{definition}

Once again, this relation is equivalent to the existence of a morphism in $\mathcal{C}^{U,r}_{2d+1}$.

\begin{proposition}\label{eveneqrel}
	The relation defined in Definition \ref{evencsdef} is an equivalence relation.
\end{proposition}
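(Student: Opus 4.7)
The proof will closely mirror that of Proposition \ref{oddeqrel}, since the structural set-up in Definition \ref{evencsdef} is the same as in the odd case modulo a dimensional shift and an extra $\R$ summand. First I would observe that reflexivity is immediate: given $(M, \nu)$ with $\nu: M \ra V_{\C,r}(TM\oplus \R^2)$, the cylinder $M\times[0,1]$ carries an obvious almost complex structure on $T(M\times[0,1])\oplus \R \cong TM\oplus \R^2$, together with the pullback of $\nu$, and provides a complex section cobordism from $(M,\nu)$ to itself. Transitivity is also standard: given cobordisms $W_1: M\ra N$ and $W_2:N\ra P$ with compatible section data along $N$, we glue along $N$ using a collar neighborhood; the compatibility condition in Definition \ref{evencsdef} ensures that both the almost complex structure on $T(W_1\cup_N W_2)\oplus \R$ and the $r$ complex sections extend smoothly across the join.

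The only substantive step is symmetry. Suppose $W$ is a complex section cobordism from $M$ to $\overline{N}$, so $TW\oplus \R$ is equipped with a complex structure and $r$ linearly independent complex sections compatible with the boundary data on $TM\oplus \R^2$ and $TN\oplus \R^2$. Reversing $W$ (reversing orientation of the cobordism direction) yields a manifold $\tilde W$ from $N$ to $M$ carrying the same complex structure on $T\tilde W\oplus \R$, but the induced complex sections on the boundary components are now the negatives of the original sections. To remedy this I would construct an interpolating cylinder exactly as in the odd case: define $\tilde\nu : M\times [0,1] \ra V_{\C,r}(T(M\times[0,1])\oplus \R) \cong V_{\C,r}(TM\oplus \R^2)\times[0,1]$ by the formula
\[\tilde\nu(x,t) = \bigl(e^{\pi i t}\nu(x),\, t\bigr),\]
where the complex rotation acts through the chosen complex structure. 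This section restricts to $\nu$ at $t=0$ and to $-\nu$ at $t=1$. Performing the analogous construction on $N\times[0,1]$ and gluing the three pieces $M\times[0,1]$, $\tilde W$, and $N\times[0,1]$ yields a complex section cobordism from $N$ to $M$ with the correct boundary data.

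The main subtlety, and the only part that might be called an obstacle, is checking that the $e^{\pi i t}$ rotation really produces a well-defined map into the complex Stiefel bundle of $TM\oplus \R^2$. The extra $\R^2$ summand (as opposed to the single $\R$ in the odd case) does not cause trouble because $\R^2$ is complex by hypothesis and the rotation acts linearly on the fibers of $V_{\C,r}(TM\oplus \R^2)$; the construction is intrinsic to the complex vector bundle $TM\oplus \R^2$ and requires no additional compatibility conditions. Once this is verified, reflexivity, transitivity, and symmetry combine to establish that complex section cobordism is an equivalence relation, proving Proposition \ref{eveneqrel}.
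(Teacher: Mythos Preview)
Your proposal is correct and follows exactly the approach the paper intends: the paper simply states that the proof is ``identical to the odd case,'' and your argument is precisely the translation of the proof of Proposition~\ref{oddeqrel} to the even setting, including the $e^{\pi i t}$ rotation on the cylinder to correct the sign reversal of the sections. The one extra observation you make, that $T(M\times[0,1])\oplus \R \cong TM\oplus \R^2$ so that the cylinder carries the right stabilized complex structure, is the only point where the even case differs in detail, and you handle it correctly.
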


\begin{theorem}
	The connected components of $B\mathcal{C}^{U,r}_{2d+1}$ are the equivalence classes of $2d$ dimensional manifolds under complex section cobordism.
\end{theorem}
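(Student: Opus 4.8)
The plan is to mirror the argument given above for the odd dimensional category $\mathcal{C}^{\theta_{\C,r}}_{2d}$. First observe that $\mathcal{C}^{U,r}_{2d+1}$ is, by construction, a special case of the cobordism category of Definition \ref{cobcat}: take the ambient dimension to be $2d+1$ and the tangential structure to be $V_{\C,r}(U_{\C,d+1})\ra BO(2d+2)$. Unravelling this structure, a lift of the classifying map of a $(2d+1)$-manifold $W$ to $V_{\C,r}(U_{\C,d+1,n})$ is precisely a complex structure on $TW\oplus\R$ together with $r$ linearly independent complex sections of it, and the collar conditions in Definition \ref{cobcat} force this data to restrict near the two ends to the prescribed structures on $TM\oplus\R^2$ and $TN\oplus\R^2$. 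Hence a morphism $M\ra N$ in $\mathcal{C}^{U,r}_{2d+1}$ is exactly a complex section cobordism between $M$ and $N$ in the sense of Definition \ref{evencsdef}; in particular, if $M$ and $N$ are complex section cobordant then there is a morphism between them and so they lie in the same path component of $B\mathcal{C}^{U,r}_{2d+1}$.

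For the converse, I would apply Theorem 3.4 of \cite{bokstedt2}, which identifies $\pi_0 B\mathcal{C}$ with the set of objects modulo the equivalence relation generated by the existence of a morphism. Thus two objects $M$, $N$ in the same path component of $B\mathcal{C}^{U,r}_{2d+1}$ are joined by a zig-zag of morphisms. Proposition \ref{eveneqrel} tells us that complex section cobordism is an equivalence relation; transitivity lets us compose the forward arrows and symmetry lets us reverse the backward ones, so the zig-zag collapses to a single complex section cobordism from $M$ to $N$. This gives the reverse inclusion and hence the claimed identification of path components.

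The step that requires genuine care is the symmetry of the relation (Proposition \ref{eveneqrel}), since reversing a cobordism $W$ reverses the orientation and hence the induced complex structure and sections on the collars. As in the proof of Proposition \ref{oddeqrel} for the odd dimensional case, one fixes this by gluing on correcting cylinders $M\times[0,1]$ and $N\times[0,1]$ carrying the section $\tilde\nu(x,t)=(e^{\pi i t}\nu(x),t)$, which rotates the $r$ complex sections back to the correct homotopy class on the reversed end; one must check this construction is compatible with the $\R^2$-versus-$\R$ stabilization bookkeeping that distinguishes the even dimensional case treated here from the odd dimensional case. Granting that, the remainder of the argument is formal.
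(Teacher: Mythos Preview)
Your proposal is correct and follows exactly the approach the paper intends: the paper simply states that the proofs of Proposition \ref{eveneqrel} and this theorem ``are identical to the odd case,'' and you have faithfully unwound what that means, including the appeal to Theorem 3.4 of \cite{bokstedt2} and the symmetry argument via rotating cylinders. There is nothing to add.
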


The proofs of the proposition and theorem above are identical to the odd case. 

\begin{theorem}
	There is a homotopy equivalence
	\[B\mathcal{C}^{U,r}_{2d+1} \ra \Omega^{\infty+2d} \MTU(d+1-r)\]
	
\end{theorem}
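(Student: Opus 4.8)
The plan is to mimic the argument already given in the even-dimensional (i.e.\ $\mathcal{C}^{\theta_{\C,r}}_{2d}$) case, invoking the Galatius--Madsen--Tillmann--Weiss theorem in the form proved in \cite{galatius} and then identifying the relevant pulled-back spectrum using Proposition \ref{heq-1}. Concretely, the category $\mathcal{C}^{U,r}_{2d+1}$ is an instance of the general cobordism category $\mathcal{C}^\theta_{d'}$ of Definition \ref{cobcat} with $d' = 2d+2$ and tangential structure $\theta = \theta_{\C,r}\colon V_{\C,r}(U_{\C,d+1}) \to BO(2d+2)$, the composite of the forgetful map $V_{\C,r}(U_{\C,d+1}) \to BU(d+1)$ with $BU(d+1) \to BO(2d+2)$. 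The main theorem of \cite{galatius} then gives a weak homotopy equivalence
\[
B\mathcal{C}^{U,r}_{2d+1} \xrightarrow{\ \simeq\ } \Omega^{\infty + 2d} \, \theta_{\C,r}^* \mathbf{MTO}(2d+2),
\]
where the shift is $2d = (2d+2)-2$, matching the indexing convention flagged in the remark after the even-dimensional theorem (objects have dimension one less than the cobordism dimension, and our spectra are indexed differently from those in \cite{galatius}).

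The second step is to identify $\theta_{\C,r}^* \mathbf{MTO}(2d+2)$ with $\MTU(d+1-r)$. Since $\theta_{\C,r} = \theta \circ p_{V_{\C,r}}$ factors through $BU(d+1)$, pulling back $\mathbf{MTO}(2d+2)$ along $\theta\colon BU(d+1)\to BO(2d+2)$ gives $\MTU(d+1)$ by definition (this is the identification $\theta^*\mathbf{MTO}(2d)=\MTU(d)$ noted in the text, with $d$ replaced by $d+1$), and then pulling back along $p_{V_{\C,r}}\colon V_{\C,r}(U_{\C,d+1}) \to BU(d+1)$ gives, by construction, the spectrum $\MTU(d+1)_{V_r}$. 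Proposition \ref{heq-1} supplies a homotopy equivalence $\MTU(d+1)_{V_r} \simeq \MTU(d+1-r)$ (the statement of \ref{heq-1} with $d$ replaced by $d+1$). Composing these identifications with the GMTW equivalence yields the desired homotopy equivalence $B\mathcal{C}^{U,r}_{2d+1} \to \Omega^{\infty+2d}\MTU(d+1-r)$.

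I expect the only genuine subtlety to be bookkeeping with the dimension and loop-space shifts, and the verification that the tangential structure $\theta_{\C,r}$ for objects of dimension $2d$ sitting inside cobordisms of dimension $2d+1$ is correctly encoded by a fibration over $BO(2d+2)$ rather than $BO(2d+1)$ — this is exactly the stabilization built into Definition \ref{cobcat}, where the classifying map of a $(d'-1)$-manifold is taken via $G(d'-1,n)\to G(d',n)$, here $G(2d,2n)\to G(2d+2,2n)$ (with a further stabilization absorbing the extra real dimension, so that objects carry a complex structure on $TM\oplus\R^2$ as stated). Once this is set up, everything reduces to citing \cite{galatius} and Proposition \ref{heq-1} verbatim; the proof is formally identical to the even-dimensional case and can be stated in one short paragraph.
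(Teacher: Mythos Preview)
Your proposal is correct and follows essentially the same route as the paper: cite the Galatius--Madsen--Tillmann--Weiss theorem to obtain $B\mathcal{C}^{U,r}_{2d+1}\simeq \Omega^{\infty+2d}\theta^*_{\C,r}\mathbf{MTO}(2d+2)$, then identify $\theta^*_{\C,r}\mathbf{MTO}(2d+2)\cong \MTU(d+1)_{V_r}\cong \MTU(d+1-r)$ via Proposition~\ref{heq-1}. The paper's proof is the one-line version of exactly this; your additional discussion of the dimension shift and the $BO(2d+2)$ bookkeeping is correct and more explicit than the paper's, but the argument is the same.
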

\begin{proof}
	In \cite{galatius}, it was shown that there is a weak homotopy equivalence \[B\mathcal{C}^{U,r}_{2d+1}\ra \Omega^{\infty+2d} \theta^*_{\C,r} \mathbf{MTO}(2d+2)\]
	
	Since $\theta^*_{\C,r} \mathbf{MTO}(2d+2) \cong \MTU(d+1)_{V_r}\cong \MTU(d-r+1)$ as in the even case, the homotopy equivalence follows.
\end{proof}
\begin{corollary}\label{eveniso}
	There is an isomorphism.
	\[\pi_0 B\mathcal{C}^{U,r}_{2d+1} \cong \pi_{2d} (\MTU(d+1-r))\]
\end{corollary}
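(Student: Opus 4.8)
The plan is to deduce the corollary directly from the preceding theorem: apply $\pi_0$ to both sides of the homotopy equivalence $B\mathcal{C}^{U,r}_{2d+1} \simeq \Omega^{\infty+2d}\MTU(d+1-r)$, and then identify $\pi_0$ of an infinite loop space with a stable homotopy group of the underlying spectrum.

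First I would record the standard fact that for a spectrum $\mathbf{X}$ and an integer $k\geq 0$ one has $\pi_q(\Omega^{\infty+k}\mathbf{X})\cong \pi_{q+k}(\mathbf{X})$ for all $q\geq 0$: after replacing $\mathbf{X}$ by a weakly equivalent $\Omega$-spectrum, $\Omega^{\infty+k}\mathbf{X}$ is a model for the $k$-th space of that $\Omega$-spectrum, whose $q$-th homotopy group is by definition $\pi_{q+k}(\mathbf{X})$. Taking $q=0$ and $k=2d$ gives $\pi_0(\Omega^{\infty+2d}\MTU(d+1-r))\cong \pi_{2d}(\MTU(d+1-r))$.

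Next, since the cobordism category is symmetric monoidal under disjoint union, its classifying space $B\mathcal{C}^{U,r}_{2d+1}$ is an infinite loop space, so $\pi_0$ carries a group structure and a weak homotopy equivalence induces a group isomorphism on $\pi_0$. Applying this to the equivalence of the preceding theorem yields $\pi_0(B\mathcal{C}^{U,r}_{2d+1})\cong \pi_0(\Omega^{\infty+2d}\MTU(d+1-r))$, and composing with the isomorphism from the previous paragraph gives the claim. Under the earlier identification of $\pi_0 B\mathcal{C}^{U,r}_{2d+1}$ with the set of complex section cobordism classes of $2d$-dimensional manifolds, this realizes the even dimensional complex section cobordism group as $\pi_{2d}(\MTU(d+1-r))$.

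There is essentially no obstacle here: the mathematical content sits entirely in the preceding theorem (hence ultimately in \cite{galatius} together with Theorem \ref{heq-1}). The only point that needs care is the bookkeeping of the suspension shift — verifying that $\Omega^{\infty+2d}$ matches $\pi_{2d}$ rather than $\pi_{2d\pm 1}$ — which requires keeping the indexing convention of the preceding theorem (inherited from \cite{galatius}) consistent throughout. Once that is pinned down, the corollary is a one-line consequence.
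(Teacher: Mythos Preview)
Your proposal is correct and matches the paper's approach: the paper gives no proof at all for this corollary, treating it as an immediate consequence of the preceding theorem by applying $\pi_0$ and using $\pi_0(\Omega^{\infty+2d}\MTU(d+1-r))\cong\pi_{2d}(\MTU(d+1-r))$. Your write-up simply spells out this one-line deduction, with the remark about the infinite loop space structure being a harmless elaboration.
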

This corollary implies that $\pi_{2d} (\MTU(d+1-r))$ is the even complex section cobordism group. Note that objects of the $2d$ dimensional complex section cobordism group are manifolds $M^{2d}$ with complex structures and $r$ linearly independent complex sections on $TM\oplus\R^2$. However, by looking instead at the morphisms of the even complex section cobordism group, we observe that the elements of the group $\pi_1(B\mathcal{C}^{\theta_{\C,r}}_{2d})$ can be represented by $2d$ dimensional almost complex manifolds with $r$ linearly independent complex tangent sections. This is an immediate consequence of Theorem 3.5 in \cite{bokstedt2} and Proposition \ref{oddeqrel}. By using the established homotopy equivalences in Corollary \ref{oddiso}, we arrive at the following isomorphism:
\begin{proposition}
	There is an isomorphism
	\[\pi_1(B\mathcal{C}^{\theta_{\C,r}}_{2d}) \cong \pi_{2d} \MTU(d-r)\]
	Moreover every class in $\pi_{2d} \MTU(d-r)$, the even dimensional complex cobordism group, has a representative which is a $2d$ dimensional manifold with $r$ linearly independent complex sections on the tangent bundle itself.
\end{proposition}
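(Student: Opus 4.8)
The plan is to deduce both assertions from the weak homotopy equivalence $B\mathcal{C}^{\theta_{\C,r}}_{2d}\simeq\Omega^{\infty+2d-1}\MTU(d-r)$ established just above, combined with the cobordism-category theorem of Galatius, Madsen, Tillmann and Weiss in the form used in \cite{bokstedt2}. For the isomorphism itself, recall that for any spectrum $E$ and any $m$ one has $\pi_k(\Omega^{\infty+m}E)\cong\pi_{k+m}(E)$. Taking $E=\MTU(d-r)$, $m=2d-1$ and $k=1$, and applying this to the equivalence above, gives
\[
\pi_1\big(B\mathcal{C}^{\theta_{\C,r}}_{2d}\big)\cong\pi_{2d}\big(\MTU(d-r)\big),
\]
which is exactly the $\pi_1$-analogue of the $\pi_0$-computation recorded in Corollary \ref{oddiso}.

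For the second assertion, I would invoke Theorem 3.5 of \cite{bokstedt2}: every loop in $B\mathcal{C}^{\theta_{\C,r}}_{2d}$ based at the empty object is homotopic to one represented by a single morphism $\emptyset\to\emptyset$ of $\mathcal{C}^{\theta_{\C,r}}_{2d}$. By Definition \ref{cobcat} applied to $\theta=\theta_{\C,r}$, such a morphism is nothing but a closed $2d$-manifold $W$ together with a lift of its Gauss map $W\to G(2d,n)$ to $V_{\C,r}(U_{\C,d})$; unwinding the two stages of this lift, the factorization through $BU(d)$ is an almost complex structure on $TW$, and the remaining lift to $V_{\C,r}(U_{\C,d})$ is precisely a choice of $r$ linearly independent complex sections of $TW$ itself, with no stabilization of the tangent bundle. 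Since the equivalence relation on such manifolds induced by homotopy of loops is exactly complex section cobordism in the sense of Definition \ref{oddcsdef}, which is an equivalence relation by Proposition \ref{oddeqrel} (symmetry being the content of the section-reversal argument given there), the assignment $W\mapsto[W]$ is a well-defined surjection onto $\pi_1(B\mathcal{C}^{\theta_{\C,r}}_{2d})\cong\pi_{2d}(\MTU(d-r))$. This establishes that every class admits a representative of the asserted type.

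The only real obstacle is the surjectivity step: showing that \emph{every} class, rather than merely a generating set, is realized by a genuine $2d$-manifold whose complex sections live on $TW$ and not on some stabilization $TW\oplus\R^{2k}$. This is precisely where Theorem 3.5 of \cite{bokstedt2} does the work, and the only point requiring care is bookkeeping the indexing conventions so that it is $\pi_1$, and not $\pi_0$, of $B\mathcal{C}^{\theta_{\C,r}}_{2d}$ that is being computed, compatibly with the shift in the infinite loop space $\Omega^{\infty+2d-1}\MTU(d-r)$; everything else is a direct unwinding of Definition \ref{cobcat} and of the identifications already made in Sections 2 and 3.
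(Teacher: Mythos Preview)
Your proposal is correct and follows essentially the same route as the paper: the isomorphism is read off from the weak equivalence $B\mathcal{C}^{\theta_{\C,r}}_{2d}\simeq\Omega^{\infty+2d-1}\MTU(d-r)$, and the surjectivity onto $\pi_1$ by closed $2d$-manifolds with $r$ complex tangent sections is exactly the content of Theorem~3.5 of \cite{bokstedt2} together with Proposition~\ref{oddeqrel}. One minor imprecision: you invoke Definition~\ref{oddcsdef} as the equivalence relation on the closed $2d$-manifolds representing loops, but that definition concerns $(2d-1)$-dimensional objects; what you actually need (and what the paper uses) is only the symmetry argument of Proposition~\ref{oddeqrel}, which is what makes Theorem~3.5 of \cite{bokstedt2} applicable.
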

This group is the main object of study in the next section. There is a geometric description of this isomorphism. Suppose we have an (almost complex) manifold $M^{2d}$ embedded in $\R^{2d+2n}$ equipped with $r$ linearly independent complex sections. Then we get a Gauss map $M^{2d} \ra G(2d,2n)$ which lifts as below. 

\begin{center}
	\begin{tikzpicture}
		
		\node (v1) at (-3,0) {$M^{2d}$};
		\node (v3) at (2,0) {$G(2d,2n)$};
		\draw [->] (v1) edge (v3);
		\node (v4) at (2,1.3) {$G_\C(d,n)$};
		\node (v5) at (2,2.6) {$V_{\C,r}(U_{\C,d,n})$};
		\draw [->] (v1) edge (v4);
		\draw [->] (v1) edge (v5);
		\draw [->] (v5) edge (v4);
		\draw [->] (v4) edge (v3);
	\end{tikzpicture}
\end{center}

If $\nu$ is the normal bundle of the embedding $M^{2d}\ra \R^{2d+2n}$, then we can construct the following commutative square.

\begin{center}
	\begin{tikzpicture}
		
		\node (v1) at (-4,0) {$M^{2d}$};
		\node (v3) at (-1,1.5) {$p_{V_{\C,r}}^* U^\perp_{\C,d,n}$};
		\node (v4) at (2,0) {$G_\C(d,n)$};
		\node (v5) at (-1,0) {$V_{\C,r}(U_{\C,d,n})$};
		\node (v6) at (2,1.5) {$U_{\C,d,n}^\perp $};
		\node (v2) at (-4,1.5) {$\nu$};
		\draw [->] (v2) edge (v1);
		\draw [->] (v2) edge (v3);
		\draw [->] (v3) edge (v6);
		\draw [->] (v6) edge (v4);
		\draw [->] (v5) edge (v4);
		\draw [->] (v6) edge (v4);
		\draw [->] (v1) edge (v5);
		\draw [->] (v3) edge (v5);
	\end{tikzpicture}
\end{center}

If we add one point to $\R^{2d+2n}$, we can consider an embedding $M^{2d}\ra S^{2d+2n}$. A small tubular neighborhood of $M$ is diffeomorphic to the total space of $\nu$. If we collapse outside this tubular neighborhood we get the Pontryagin-Thom map $ S^{2d+2n} \ra Th(\nu)$. Then, the above bundle maps give the composition \[ S^{2d+2n} \ra Th(\nu) \ra Th(p_{V_{\C,r}}^* (U^\perp_{\C,d,n})) \]
This composition represents $M^{2d}\in \pi_{2d}(\MTU_{V_r}(d))\cong \pi_{2d}(\MTU(d-r))$.

\section{The cobordism obstruction}
In the previous section, we reduced the study of the complex section cobordism groups to the study of the homotopy groups of $\MTU(d)$ using Theorems \ref{oddiso} and \ref{eveniso}. We had previously shown that there is a homomorphism: \[\pi_{q}(\MTU(d)) \ra \pi_q(\MU)\cong \Omega^U_{q}\]

When $q\leq 2d$, we showed that this map is an isomorphism. When $q>2d$, it is the forgetful map, which forgets the structure of the complex sections in $\pi_q(\MTU(d))$. To understand complex section cobordism, it is sufficient to understand the kernel and image of this map. The kernel represents the possible distinct structures of linearly independent complex sections on a manifold. The image will be the set of almost complex manifolds admitting a structure of $r$ complex sections. The rest of this section will focus on computing these groups using the cofibration long exact sequence in homotopy. We consider the following segment:
\[	\begin{array}{c}
	...\ra \pi_{2d+1}(\MU)\ra \pi_{2d+1}(\overline{\MTU}(d-r))\ra \pi_{2d}(\MTU(d-r)) \ra \\ \pi_{2d}(\MU) \xra{\gamma^r} \pi_{2d}(\overline{\MTU}(d-r))\ra \pi_{2d-1}(\MTU(d-r)) \ra \pi_{2d-1}(\MU) \ra ... 
\end{array} \]
Lemma \ref{iso} tells us that $\pi_{2d+1}(\MU)\cong\Omega_{2d+1}^U=0$ and $\pi_{2d-1}(\MU)\cong\Omega_{2d-1}^U=0$, leaving a five term exact sequence. Moreover, we know that $\pi_{2d}(\MU)\cong \Omega_{2d}^U$. So, the above exact sequence reduces to 
\[	\begin{array}{c}
0\ra \pi_{2d+1}(\overline{\MTU}(d-r))\ra \pi_{2d}(\MTU(d-r)) \ra  \Omega_{2d}^U\\ \xra{\gamma^r} \pi_{2d}(\overline{\MTU}(d-r))\ra \pi_{2d-1}(\MTU(d-r)) \ra 0 
\end{array} \]

For the rest of this section let $i_{d,r}$ be the rank of the group $H^{2d}(\MTU(d-r);\Q)$ and $j_{d,r}$ be the rank of $H^{2d}(\overline{\MTU}(d-r);\Q)$. A classical result of homotopy theory states that $H^*(X;\Q)\cong \pi_*(X)\otimes \Q$ for spectrum $X$ of finite type. Thus, we determine the ranks of the homotopy groups of $\MTU(d)$ and $\overline{\MTU}(d-r)$ by using Propositions \ref{cohMTU} and \ref{cohMTUbar}. Specifically, $i_{d,r}=\tn{Rank}(\pi_{2d}(\MTU(d-r)))=\tn{Rank}(\Ker(\gamma^r))$. Since the rank of $\pi_{q}(\overline{\MTU}(d-r))\otimes \Q$ is 0 for odd $q$, then $\pi_{q}(\overline{\MTU}(d-r))$ is a finite group for $q$ odd.

It follows from the above sequence that the map $\pi_{2d+1}(\overline{\MTU}(d-r))\ra \pi_{2d}(\MTU(d-r))$ is an injection. So, the forgetful map $\pi_{2d}(\MTU(d-r)) \ra \Omega_{2d}^U$ must have kernel $\pi_{2d+1}(\overline{\MTU}(d-r))$. Moreover, since $\Omega_{2d}^U$ is free abelian, the kernel must be the entire torsion of $\pi_{2d}(\MTU(d-r))$. Thus the group, $\pi_{2d}(\MTU(d-r))$ splits as  $\pi_{2d+1}(\overline{\MTU}(d-r))\oplus \Z^{\oplus i_{d,r}}$. Corollary \ref{stabcor} shows there is an isomorphism of homotopy groups $\pi_{2d+1}(\overline{\MTU}(d-r))\cong \pi_{2d+1}(\MTU(d+1,r+1))$. This proves the following theorem.
\begin{theorem}\label{splitting}
	There is an isomorphism
	\[\pi_{2d}(\MTU(d-r))\cong \pi_{2d+1}(\MTU(d+1,r+1)) \oplus \Z^{\oplus i_{d,r}}\]
	where $i_{d,r}$ is the rank of $\Z[c_1,...,c_{d-r}]$ in degree $2d$. 
\end{theorem}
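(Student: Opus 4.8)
The plan is to extract everything from the five-term exact sequence
\[
0\ra \pi_{2d+1}(\overline{\MTU}(d-r))\ra \pi_{2d}(\MTU(d-r)) \ra \Omega_{2d}^U \xra{\gamma^r} \pi_{2d}(\overline{\MTU}(d-r))\ra \pi_{2d-1}(\MTU(d-r)) \ra 0
\]
which has already been derived just above the statement, together with the two ingredients (i) $\Omega_{2d}^U$ is free abelian and (ii) $\pi_{2d+1}(\overline{\MTU}(d-r))$ is finite (this follows because, by Proposition \ref{cohMTUbar}, $H^*(\overline{\MTU}(d-r);\Q)$ is concentrated in even degrees, so rationally $\pi_{2d+1}(\overline{\MTU}(d-r))\otimes\Q=0$). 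First I would note that exactness at $\pi_{2d+1}(\overline{\MTU}(d-r))$ (the first $0$) says the map $\pi_{2d+1}(\overline{\MTU}(d-r))\ra \pi_{2d}(\MTU(d-r))$ is injective, so it identifies $\pi_{2d+1}(\overline{\MTU}(d-r))$ with the kernel $K$ of the forgetful map $f:\pi_{2d}(\MTU(d-r))\ra \Omega_{2d}^U$.

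Next I would argue that $K$ is exactly the torsion subgroup of $\pi_{2d}(\MTU(d-r))$. On one hand $K\cong \pi_{2d+1}(\overline{\MTU}(d-r))$ is finite, hence torsion. On the other hand the quotient $\pi_{2d}(\MTU(d-r))/K$ injects into $\Omega_{2d}^U$, which is torsion-free, so any torsion element of $\pi_{2d}(\MTU(d-r))$ must already lie in $K$. Therefore $K=\mathrm{Tor}(\pi_{2d}(\MTU(d-r)))$ and the quotient $\pi_{2d}(\MTU(d-r))/K$ is a finitely generated free abelian group (finite generation holds because $\MTU(d-r)$ is of finite type). Its rank is $i_{d,r}=\mathrm{Rank}(\pi_{2d}(\MTU(d-r)))$, which by the rational Hurewicz/finite-type identification $\pi_*(X)\otimes\Q\cong H^*(X;\Q)$ equals the rank of $H^{2d}(\MTU(d-r);\Q)$; and by Proposition \ref{cohMTU} that cohomology is, rationally, the degree-$2d$ part of the free module over $H^*(BU(d-r);\Q)=\Q[c_1,\dots,c_{d-r}]$ generated by the Thom class, i.e. the degree-$2d$ piece of $\Z[c_1,\dots,c_{d-r}]$ after tensoring with $\Q$. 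Since a finitely generated abelian group splits (non-canonically) as the direct sum of its torsion subgroup and a free complement, we get
\[
\pi_{2d}(\MTU(d-r))\cong K \oplus \Z^{\oplus i_{d,r}}\cong \pi_{2d+1}(\overline{\MTU}(d-r))\oplus \Z^{\oplus i_{d,r}},
\]
and finally Corollary \ref{stabcor} rewrites the torsion summand as $\pi_{2d+1}(\MTU(d+1,r+1))$, giving the claimed isomorphism.

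The only genuinely delicate points are bookkeeping rather than deep: one must be careful that the identification of the kernel as the full torsion subgroup uses both the finiteness of $\pi_{2d+1}(\overline{\MTU}(d-r))$ and the torsion-freeness of $\Omega_{2d}^U$, and one must invoke finite type of the spectra to know $\pi_{2d}(\MTU(d-r))$ is finitely generated so that the splitting of abelian groups applies. The rank computation $i_{d,r}=\dim_\Q (\Z[c_1,\dots,c_{d-r}]\otimes\Q)_{2d}$ is then immediate from Propositions \ref{cohMTU} and \ref{cohMTUbar}. I expect no real obstacle; the statement is essentially a formal consequence of the exact sequence already assembled in the text.
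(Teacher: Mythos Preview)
Your proposal is correct and follows essentially the same route as the paper: both extract the result from the five-term exact sequence, identify the kernel of the forgetful map with the finite group $\pi_{2d+1}(\overline{\MTU}(d-r))$, observe that this must be the full torsion of $\pi_{2d}(\MTU(d-r))$ since $\Omega_{2d}^U$ is free, split off a free part of rank $i_{d,r}$ computed via Proposition \ref{cohMTU}, and then invoke Corollary \ref{stabcor}. Your write-up is in fact slightly more careful about justifying the torsion identification and the finite-type hypothesis needed for the splitting, but there is no substantive difference in strategy.
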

Corollary \ref{eveniso} shows that $\pi_{2d}(\MTU(d-r))$ is the complex section cobordism group of $2d$ dimensional manifolds $M$ with $r$ linearly independent sections. We conclude the following result:
\begin{corollary}\label{T2body}
	Let $[M^{2d}]\in \Omega_{2d}^U$, be such that $M^{2d}$ is equipped with $r$ linearly independent complex sections on $TM$. Then, there are only finitely many pairs, up to complex section cobordism, $(N^{2d},s)$ where the manifold $N^{2d}\in [M^{2d}]$ and the map $s:N\ra V_{\C,r}(TN)$ is the structure of $r$ linearly independent complex sections on $N^{2d}$. Moreover, such pairs are indexed by the group $\pi_{2d+1}(\MTU(d+1,r+1))$.
\end{corollary}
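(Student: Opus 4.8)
The plan is to deduce the statement from Theorem \ref{splitting} once the set of pairs in question is identified with (a coset inside) the group $\pi_{2d}(\MTU(d-r))$. First I would recall that, as established just before Theorem \ref{splitting}, Corollary \ref{eveniso} identifies $\pi_{2d}(\MTU(d-r))$ with the even-dimensional complex section cobordism group of $2d$-manifolds carrying $r$ linearly independent complex sections, and that every class in this group admits a representative of the form $(N^{2d},s)$ with $s\colon N\ra V_{\C,r}(TN)$ an honest $r$-frame of complex sections of the tangent bundle itself. Consequently the set of complex section cobordism classes of such pairs is precisely the underlying set of $\pi_{2d}(\MTU(d-r))$, and the forgetful homomorphism $\pi_{2d}(\MTU(d-r))\ra \pi_{2d}(\MU)\cong\Omega_{2d}^U$ from Section 4 is given by $[(N,s)]\mapsto[N]$.

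Next I would observe that, for a fixed $[M^{2d}]\in\Omega_{2d}^U$, the pairs $(N^{2d},s)$ with $N^{2d}\in[M^{2d}]$, taken up to complex section cobordism, form exactly the preimage of $[M^{2d}]$ under this homomorphism. By hypothesis $M^{2d}$ itself carries $r$ linearly independent complex sections, so $[M^{2d}]$ lies in the image; hence the preimage is a coset of $\Ker\big(\pi_{2d}(\MTU(d-r))\ra\Omega_{2d}^U\big)$ and is therefore in (non-canonical) bijection with this kernel.

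To finish, I would invoke the analysis of the cofiber long exact sequence carried out right before Theorem \ref{splitting}. Since the odd complex cobordism groups $\pi_{2d\pm1}(\MU)$ vanish and $\Omega_{2d}^U$ is free abelian, the kernel of the forgetful map is the torsion subgroup of $\pi_{2d}(\MTU(d-r))$ and equals $\pi_{2d+1}(\overline{\MTU}(d-r))$. This group is finite: Proposition \ref{cohMTUbar} gives $H^{2d+1}(\overline{\MTU}(d-r);\Q)=0$, so its rank is zero, while finite type makes it finitely generated. Finally Corollary \ref{stabcor} identifies $\pi_{2d+1}(\overline{\MTU}(d-r))$ with $\pi_{2d+1}(\MTU(d+1,r+1))$, so the pairs over $[M^{2d}]$ are finite in number and indexed by $\pi_{2d+1}(\MTU(d+1,r+1))$, as claimed.

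The argument is mostly bookkeeping on top of Theorem \ref{splitting}; the one step that I expect to require genuine care is the second paragraph, namely checking that the geometric relation of complex section cobordism on pairs $(N,s)$ agrees exactly with the fiberwise equivalence relation coming from the cobordism-category description of Section 3, so that the preimage of $[M^{2d}]$ is a single coset rather than something coarser or finer. Given the identifications already made this should be immediate, but it is the point where the geometric definition and the homotopy-theoretic statement must be matched precisely.
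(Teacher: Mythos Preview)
Your proposal is correct and follows essentially the same approach as the paper. The paper's argument is compressed into the discussion preceding Theorem~\ref{splitting} together with the single sentence invoking Corollary~\ref{eveniso} just before Corollary~\ref{T2body}; you have simply unpacked these steps---identifying the pairs with a coset of the kernel of the forgetful map, computing that kernel via the cofiber sequence as $\pi_{2d+1}(\overline{\MTU}(d-r))$, noting its finiteness from the vanishing of odd rational cohomology, and applying Corollary~\ref{stabcor}---in more explicit detail than the paper does.
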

A similar result is also true for odd dimensional manifolds. The above corollary and following proposition provide the proofs of Theorems \ref{T2} and \ref{T3}.
\begin{proposition}\label{T3body}
	The odd dimensional complex section cobordism group is finite.
\end{proposition}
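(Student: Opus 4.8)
The plan is to identify the odd dimensional complex section cobordism group with a homotopy group of one of our spectra and then show that this homotopy group is finite by a rational cohomology computation. By Corollary \ref{oddiso}, the odd dimensional complex section cobordism group — the group of $(2d-1)$-dimensional manifolds with complex structure on $TM\oplus\R$ and $r$ linearly independent complex sections, up to the cobordism relation of Definition \ref{oddcsdef} — is isomorphic to $\pi_0 B\mathcal{C}^{\theta_{\C,r}}_{2d}$, which in turn is $\pi_{2d-1}(\MTU(d-r))$ (after the appropriate re-indexing; concretely it is $\pi_{2d+2r-1}(\MTU(d))$ in the notation of Corollary \ref{oddiso}, but via the stabilization isomorphisms of Theorem \ref{stab} and Corollary \ref{stabcor} this is the same as $\pi_{2d-1}(\MTU(d-r))$). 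So it suffices to prove that $\pi_{2d-1}(\MTU(d-r))$ is finite.

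For this I would use exactly the five-term exact sequence displayed just before the statement, namely
\[
0\ra \pi_{2d+1}(\overline{\MTU}(d-r))\ra \pi_{2d}(\MTU(d-r)) \ra  \Omega_{2d}^U \xra{\gamma^r} \pi_{2d}(\overline{\MTU}(d-r))\ra \pi_{2d-1}(\MTU(d-r)) \ra 0,
\]
but shifted down by one degree: the relevant segment reads
\[
\Omega_{2d-1}^U \xra{\gamma^r} \pi_{2d-1}(\overline{\MTU}(d-r))\ra \pi_{2d-2}(\MTU(d-r)) \ra \Omega_{2d-2}^U.
\]
Hmm — it is cleaner to run the argument directly on the odd homotopy of $\MTU(d-r)$. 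From the cofibration $\MTU(d-r)\ra \MU\ra \overline{\MTU}(d-r)$ we get
\[
\pi_{2d}(\MU)\ra \pi_{2d}(\overline{\MTU}(d-r)) \ra \pi_{2d-1}(\MTU(d-r)) \ra \pi_{2d-1}(\MU).
\]
By Lemma \ref{iso} and the vanishing of $\Omega^U$ in odd degrees, $\pi_{2d-1}(\MU)\cong\Omega^U_{2d-1}=0$, so $\pi_{2d-1}(\MTU(d-r))$ is a quotient of $\pi_{2d}(\overline{\MTU}(d-r))$. Now invoke the classical fact (already used in the text) that for a spectrum of finite type $\pi_*\otimes\Q\cong H_*(-;\Q)$, together with Proposition \ref{cohMTUbar}, which states that $H^*(\overline{\MTU}(d-r);\Z)$ is torsion-free and concentrated in \emph{even} degrees. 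Dualizing, $H_{2d}(\overline{\MTU}(d-r);\Q)$ may be nonzero, but what I actually need is that $\pi_{2d-1}(\MTU(d-r))$ has rank zero; since it is a quotient of $\pi_{2d}(\overline{\MTU}(d-r))$ I instead use the exactness more carefully — the map $\gamma^r:\Omega_{2d}^U\to\pi_{2d}(\overline{\MTU}(d-r))$ has cokernel mapping isomorphically (after the $\pi_{2d-1}(\MU)=0$ input) onto $\pi_{2d-1}(\MTU(d-r))$, and since $\Omega_{2d}^U$ is free abelian of rank $j_{d,r}$ (the full rank of $\pi_{2d}(\overline{\MTU}(d-r))\otimes\Q$, because $H^*(\overline{\MTU}(d-r))\hookrightarrow H^*(\MU)$ by Proposition \ref{cohMTUbar} and in degree $2d$ every rational cohomology class of $\MU$ not coming from $\MTU(d-r)$ survives), $\gamma^r$ is rationally surjective, so its cokernel is finite. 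An abelian group that is both finitely generated (it is a subquotient of $\pi_{2d}(\overline{\MTU}(d-r))$, which is finitely generated since $\overline{\MTU}(d-r)$ is of finite type) and of rank zero is finite, hence $\pi_{2d-1}(\MTU(d-r))$ is finite.

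The main obstacle is the bookkeeping in the middle step: pinning down precisely that the rational rank of $\gamma^r:\Omega^U_{2d}\to\pi_{2d}(\overline{\MTU}(d-r))$ equals $j_{d,r}$, i.e. that $\gamma^r$ is rationally onto. This is where Proposition \ref{cohMTUbar} does the real work — the injectivity $H^*(\overline{\MTU}(d-r);\Q)\hookrightarrow H^*(\MU;\Q)$ identifies $H^{2d}(\overline{\MTU}(d-r);\Q)$ with the span of those monomials in the $c_i$ that involve some $c_j$ with $j>d-r$, and the composite $\Omega^U_{2d}\otimes\Q\to\pi_{2d}(\MU)\otimes\Q\xra{\gamma^r}\pi_{2d}(\overline{\MTU}(d-r))\otimes\Q$ is dual to this inclusion, hence surjective. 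Everything else — finite generation from finite type, torsion-versus-rank from the even-concentration of the cohomology, and the vanishing of odd $\Omega^U$ — is standard and I would state it without belaboring the details.
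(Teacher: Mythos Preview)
Your argument is correct in substance but takes a substantially longer route than the paper. The paper's proof is one line: the odd dimensional complex section cobordism group is $\pi_{2d+2r-1}(\MTU(d))$ by Corollary~\ref{oddiso}, and since $H^*(\MTU(d);\Q)\cong\Q[c_1,\dots,c_d]$ is concentrated in even degrees (Proposition~\ref{cohMTU}), we have $\pi_{2d+2r-1}(\MTU(d))\otimes\Q=0$, so the group is finite. You instead pass through the cofibration sequence and $\overline{\MTU}(d-r)$, reduce to showing $\gamma^r$ is rationally surjective, and then deduce that from Proposition~\ref{cohMTUbar}. That works, but it is essentially re-deriving the vanishing of odd rational homotopy of $\MTU$ via the long exact sequence rather than reading it off directly from the cohomology computation you already have.

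One small caveat: your claim that $\pi_{2d+2r-1}(\MTU(d))\cong\pi_{2d-1}(\MTU(d-r))$ ``via the stabilization isomorphisms of Theorem~\ref{stab} and Corollary~\ref{stabcor}'' is not justified --- those results concern $\MTU(d,r)$ and $\overline{\MTU}(d)$, not $\MTU(d)$ itself, and in general these two homotopy groups need not coincide. This does not affect your proof, since both groups are instances of ``the odd dimensional complex section cobordism group'' (for different choices of parameters) and your argument applies equally to any odd homotopy group of any $\MTU(k)$; but you should drop the stabilization remark and simply work with whichever indexing you prefer.
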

\begin{proof}
	The odd dimensional complex section cobordism group is $\pi_{2d+2r-1}(\MTU(d))$.
	
	Since $H^{2d+2r-1}(\MTU(d);\Q)=0$, $\pi_{2d+2r-1}(\MTU(d))\otimes \Q =0$. Thus the group $\pi_{2d+2r-1}(\MTU(d))$ is finite.
\end{proof}

Let $\gamma^r:\Omega_{2d}^{U} \ra \pi_{2d}(\overline{\MTU}(d-r))$ be the induced map in homotopy from $\MU\ra \overline{\MTU}(d-r)$. By the homotopy exact sequence, a manifold $M\in \Omega_{2d}^U$ lifts to the group $\pi_{2d}(\MTU(d-r))$ if and only if $\gamma^r(M)=0$. We conclude that:

\begin{theorem}\label{obstruction}
	A manifold $M$ is complex cobordant to a manifold equipped with $r$ linearly independent complex sections on $TM$ if and only if $\gamma^r(M)=0$.
\end{theorem}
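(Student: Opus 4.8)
The plan is to read off Theorem \ref{obstruction} directly from the homotopy exact sequence that has already been assembled in this section. Recall from Corollary \ref{eveniso} (together with the accompanying Proposition) that a manifold $M^{2d}\in\Omega_{2d}^U$ can be equipped with $r$ linearly independent complex sections on its tangent bundle $TM$ precisely when the class $[M]$ lies in the image of the forgetful map $\pi_{2d}(\MTU(d-r))\to\pi_{2d}(\MU)\cong\Omega_{2d}^U$. So the content of the theorem is the identification of this image with the kernel of $\gamma^r$, and this is exactly exactness of the cofibration sequence at the spot $\pi_{2d}(\MU)$.

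Concretely, I would proceed as follows. First, recall the cofibration $\MTU(d-r)\to\MU\to\overline{\MTU}(d-r)$ and its associated long exact sequence in homotopy, already displayed above; the relevant three-term piece is
\[
\pi_{2d}(\MTU(d-r))\longrightarrow \Omega_{2d}^U \xrightarrow{\ \gamma^r\ } \pi_{2d}(\overline{\MTU}(d-r)).
\]
Exactness at $\Omega_{2d}^U$ says that the image of the left-hand map equals $\Ker(\gamma^r)$. Next, I would invoke the geometric interpretation established in the previous section: by Corollary \ref{eveniso} and the subsequent Proposition, the group $\pi_{2d}(\MTU(d-r))$ is the even-dimensional complex section cobordism group, and every class in it is represented by a $2d$-dimensional almost complex manifold carrying $r$ linearly independent complex sections on the tangent bundle itself; moreover the forgetful map $\pi_{2d}(\MTU(d-r))\to\Omega_{2d}^U$ is induced by forgetting the section data, so it sends such a representative to its underlying complex cobordism class. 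Combining these two facts: $[M]$ is complex cobordant to a manifold with $r$ linearly independent complex sections on its tangent bundle if and only if $[M]$ is in the image of the forgetful map, if and only if $\gamma^r([M])=0$.

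There is essentially no obstacle here — the statement is a formal consequence of material already proved — so the only thing to be careful about is bookkeeping: making sure the geometric representatives produced by Corollary \ref{eveniso} genuinely have their sections on $TM$ rather than on a stabilization $TM\oplus\R^k$ (this is precisely the point of the Proposition following Corollary \ref{eveniso}, which upgrades $\pi_1$ of the even cobordism category to representatives with sections on the tangent bundle itself), and confirming that the map whose image we are computing is literally the one called $\gamma^r$, i.e. the map induced on $\pi_{2d}$ by the cofibration map $\MU\to\overline{\MTU}(d-r)$. Once those identifications are in place, the proof is one line of diagram chasing through exactness.
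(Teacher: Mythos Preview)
Your proposal is correct and follows exactly the paper's own reasoning: the theorem is stated in the paper as an immediate consequence of the sentence preceding it (``By the homotopy exact sequence, a manifold $M\in \Omega_{2d}^U$ lifts to the group $\pi_{2d}(\MTU(d-r))$ if and only if $\gamma^r(M)=0$''), together with the geometric identification from Section~3. You have simply made explicit the references to Corollary~\ref{eveniso} and the subsequent Proposition that justify the lifting statement geometrically, which the paper leaves implicit.
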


Importantly, the vanishing of $\gamma^r(M)$ is a sufficient, not just a necessary condition for finding $r$ linearly independent complex sections. To describe the obstruction map $\gamma^r$, we use the characteristic classes $s_\omega$. These classes are defined in \cite{Thom} and \cite{milnor}. Let $\omega=\{i_1\geq i_2 \geq...\geq i_k \neq 0\}$ be a partition of $d$ and define the length of $\omega$ to be $l(\omega)=k$. Let $f_\omega(t_1,...,t_k)$ be the least symmetric polynomial in variables $t_1,...,t_d$ with $t^{i_1}_1...t^{i_k}_k$ as a summand. We can write $f_\omega(t_1,...,t_k)$ as a function of the elementary symmetric polynomials $\sigma_1(t_1,...,t_d)$,..., $\sigma_d(t_1,...,t_d)$ so that $f_\omega(t_1,...,t_k)=s_\omega(\sigma_1,...\sigma_d)$ where $s_\omega$ is a polynomial. Let $M^{2d}$ be an almost complex manifold. Then, the characteristic numbers $s_\omega(M^{2d})$ are defined as \[s_\omega(M^{2d})=\k{s_\omega(c_1(TM),...,c_d(TM)), [M^{2d}]}\]
A special example is $s_{1,1,..,1}(M^{2d})$. The least symmetric polynomial in variables $t_1,...,t_d$ with $t_1...t_d$ as a summand is the elementary symmetric polynomial $t_1t_2...t_d$ itself. So the polynomial $s_{1,1,...,1}(c_1,...,c_d)=c_d$ and $s_{1,1,..,1}(M^{2d})=\chi(M^{2d})$. We know by the theorem of Hopf \cite{hopf} that this is the only obstruction for $r=1$ vector field. It will be helpful to recall the following property, see \cite{milnor} or \cite{Thom}.

\begin{lemma}\label{alglem}
	If $c_k=0$ for all $k\geq N$, then $s_\omega(c_1,...,c_d)=0$ for all $\omega$ such that $l(\omega)\geq N$.
\end{lemma}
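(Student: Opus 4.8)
The plan is to prove Lemma \ref{alglem} by a direct symmetric-function argument, working with the polynomials $f_\omega$ before passing to the $s_\omega$. Recall that $s_\omega(c_1,\dots,c_d)$ is defined so that, after substituting $c_i = \sigma_i(t_1,\dots,t_d)$, it equals $f_\omega(t_1,\dots,t_d)$, the smallest symmetric polynomial containing the monomial $t_1^{i_1}\cdots t_k^{i_k}$ as a summand. The key observation is that setting $c_k = 0$ for all $k \geq N$ corresponds, on the level of the $t$-variables, to specializing to the case where at most $N-1$ of the $t_i$ are nonzero: indeed, $\sigma_N(t_1,\dots,t_d) = \dots = \sigma_d(t_1,\dots,t_d) = 0$ forces all elementary symmetric functions of degree $\geq N$ to vanish, which (over a domain) means the $t_i$ can be taken to lie in a set of size at most $N-1$ — equivalently, we may set $t_N = t_{N+1} = \dots = t_d = 0$ without changing the value of any symmetric polynomial expressed in $\sigma_1,\dots,\sigma_{N-1}$.

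The steps, in order, are: (1) State precisely that $s_\omega(c_1,\dots,c_d)\big|_{c_k = 0,\ k\geq N}$ equals the polynomial in $\sigma_1,\dots,\sigma_{N-1}$ obtained by writing $f_\omega$ in terms of the elementary symmetric functions and discarding the higher ones; (2) Observe that this specialized polynomial, re-expanded in the $t$-variables, agrees with $f_\omega(t_1,\dots,t_{N-1},0,\dots,0)$, the restriction of $f_\omega$ to $t_N = \dots = t_d = 0$; (3) Note that $f_\omega$ is, up to a positive integer multiple, the monomial symmetric function $m_\omega$ associated to the partition $\omega = (i_1,\dots,i_k)$ with $k = l(\omega)$ parts, so every monomial appearing in $f_\omega$ is a product of exactly $k$ distinct powers $t_{j_1}^{i_1}\cdots t_{j_k}^{i_k}$ with the $j$'s distinct; (4) Conclude that if $l(\omega) = k \geq N$, then every such monomial involves at least $N$ distinct variables, hence vanishes once we set all but $N-1$ of the variables to zero — so $f_\omega(t_1,\dots,t_{N-1},0,\dots,0) = 0$, and therefore $s_\omega(c_1,\dots,c_d) = 0$ under the given substitution. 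Since $s_\omega$ is a polynomial, vanishing after this substitution of variables is exactly the assertion that $s_\omega(c_1,\dots,c_d) = 0$ when $c_k = 0$ for $k \geq N$, interpreting the $c_i$ as the relevant variables.

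The main obstacle is making step (1)–(2) rigorous: one must be careful that "setting $c_k = 0$ for $k \geq N$" in the polynomial ring $\Z[c_1,\dots,c_d]$ genuinely corresponds to the specialization of the $t$-variables, and not merely to some coincidence of values. The cleanest way is to recall that $\sigma_1,\dots,\sigma_d$ are algebraically independent, so the substitution $c_i \mapsto \sigma_i$ identifies $\Z[c_1,\dots,c_d]$ with the ring of symmetric polynomials $\Z[\sigma_1,\dots,\sigma_d]$; then the quotient by the ideal $(c_N,\dots,c_d)$ is identified with $\Z[\sigma_1,\dots,\sigma_{N-1}]$, which embeds into $\Z[t_1,\dots,t_{N-1}]$ as the symmetric polynomials there. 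Under this chain of identifications, $s_\omega(c_1,\dots,c_d) \bmod (c_N,\dots,c_d)$ becomes precisely $f_\omega(t_1,\dots,t_{N-1},0,\dots,0)$, and the monomial-symmetric-function count in step (3) finishes the argument. Alternatively, one can simply cite this as the standard fact from \cite{milnor} or \cite{Thom} and give the above as a brief sketch; since the lemma is attributed there, a short proof or a reference suffices.
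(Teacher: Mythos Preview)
Your proof is correct and takes a different route from the paper's. The paper argues by direct monomial manipulation: starting from a monomial $t_1^{i_1}\cdots t_N^{i_N}$ in $f_\omega$, it uses the relation $\sigma_N=0$ to rewrite $t_1\cdots t_N$ as a sum of products involving a new variable, thereby increasing the number of distinct $t_j$'s appearing; iterating with the relations $\sigma_M=0$ for $M\geq N$ eventually forces every term to be a multiple of $t_1\cdots t_d=\sigma_d=c_d=0$. Your argument instead passes through the ring-theoretic identification $\Z[c_1,\dots,c_d]/(c_N,\dots,c_d)\cong\Z[t_1,\dots,t_{N-1}]^{S_{N-1}}$ and observes that $f_\omega$, being a monomial symmetric function whose every term involves $l(\omega)\geq N$ distinct variables, vanishes identically once only $N-1$ variables remain. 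Your approach is cleaner and handles all lengths $l(\omega)\geq N$ uniformly, while the paper's hands-on rewriting avoids invoking the identification of symmetric function rings under specialization but is somewhat more laborious (and, as written, only treats $l(\omega)=N$ explicitly). One minor remark: with the paper's definition of $f_\omega$ as the \emph{least} symmetric polynomial containing $t_1^{i_1}\cdots t_k^{i_k}$, $f_\omega$ is exactly the monomial symmetric function $m_\omega$, not merely a positive multiple of it---but this does not affect your argument.
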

\begin{proof}
	Suppose $c_k=0$ for all $k\geq N$. Let $\omega$ have length $N$. The polynomial $f_\omega(t_1,...,t_d)$ consists of monomials of the form $t^{i_1}_1...t^{i_N}_N$ where $\omega=\{i_1\geq ...\geq i_N \neq 0\}$ up to permutation of the variables $t_1,..., t_d$.  Since, by assumption, $c_N=\sigma_N(t_1,...,t_d)=0$, we can write $t_1...t_N=-\sum t_{j_1}...t_{j_N}$ where $\{j_1,..., j_N\}\neq \{1,...,N\}$. In particular: \[t^{i_1}_1...t^{i_N}_N= -t^{i_1-1}_1...t^{i_N-1}_N\sum t_{j_1}...t_{j_N}\]
	Notice that each monomial in this sum has more than $N$ distinct $t_j$. Thus we may repeat the process (along with symmetry), using $c_{M}=0$ for $M\geq N$, to increase the number of distinct $t_j$ in the polynomial up to $d$. Thus $f_\omega(t_1,...,t_d)=a t_1...t_d=ac_d=0$ for some integer $a$, if $c_k=0$ for all $k\geq N$. We conclude that $s_\omega(c_1,...,c_d)=0$ for all $\omega$ such that $l(\omega)\geq N$. 
\end{proof}

We now establish the rational obstruction, Theorem \ref{ratobsintro}, in terms of characteristic numbers.

\begin{theorem}\label{ratobs}
	Let $M^{2d}$ be a $d$-dimensional almost-complex manifold with $r<d$. Let \[\gamma^r_\Q:=\gamma^r \otimes \Q: \Omega_{2d}^U\otimes \Q \ra \pi_{2d}(\overline{\MTU}(d-r))\otimes \Q\]
	Then $\gamma^r_\Q(M^{2d})=0$ if and only if $s_\omega(M^{2d})=0$ for all $\omega$ of length greater than $d-r$.
\end{theorem}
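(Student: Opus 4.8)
The plan is to exploit the rational cohomology computations from Section 2 to make both sides of the map $\gamma^r_\Q$ completely explicit, and then identify its kernel with a span of characteristic numbers. First I would rationalize everything: since $\MU$, $\MTU(d-r)$, and $\overline{\MTU}(d-r)$ are all of finite type, $\pi_*(X)\otimes\Q\cong H_*(X;\Q)$ for each, and dualizing, the cohomology groups computed in Propositions \ref{cohMTU} and \ref{cohMTUbar} and Theorem \ref{map} describe the rational homotopy. In particular $\Omega_{2d}^U\otimes\Q$ is dual to $H^{2d}(\MU;\Q)=\Q[c_1,c_2,\dots]_{2d}$, and by Proposition \ref{cohMTUbar} the map $H^{2d}(\overline{\MTU}(d-r);\Q)\to H^{2d}(\MU;\Q)$ is injective with image the ideal-type submodule generated by $c_{d-r+1},\dots$; more precisely, combining this with Theorem \ref{map} (applied after passing to the colimit defining $\overline{\MTU}$), the image is exactly the $\Q[c_1,c_2,\dots]$-submodule generated by $c_{d-r+1}, c_{d-r+2},\dots$, intersected with degree $2d$. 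Dually, $\gamma^r_\Q$ on $\Omega_{2d}^U\otimes\Q$ is, up to the duality pairing, the quotient map $\Q[c_1,c_2,\dots]_{2d}\to \bigl(\Q[c_1,c_2,\dots]/\langle c_1,\dots,c_{d-r}\rangle\bigr)_{2d}$, so $\ker\gamma^r_\Q$ is precisely the span of those monomials in the $c_i$ (equivalently, those polynomials) lying in the ideal $\langle c_1,\dots,c_{d-r}\rangle$ in degree $2d$; equivalently, the annihilator of that ideal under the Chern-number pairing.

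The second step is to translate this kernel description into the language of the classes $s_\omega$. The key algebraic fact is the duality between the two natural bases of $H^*(BU;\Q)$ in a fixed degree: the monomial basis $\{c_\omega\}$ and the Newton-type basis $\{s_\omega\}$ (these span the same graded piece since $s_\omega = \pm c_\omega + (\text{lower-length terms})$, a standard triangularity coming from the expression of $f_\omega$ in elementary symmetric functions). I would argue: a class $x\in\Omega_{2d}^U\otimes\Q$ lies in $\ker\gamma^r_\Q$ iff $\langle y, x\rangle = 0$ for every $y$ in the degree-$2d$ part of $\langle c_1,\dots,c_{d-r}\rangle$. Now Lemma \ref{alglem} (with $N=d-r+1$) shows that $s_\omega(c_1,\dots,c_{d-r},0,\dots,0)=0$ for $l(\omega)>d-r$; taking contrapositive/refinement, the $s_\omega$ with $l(\omega)>d-r$ are exactly those that lie in the ideal $\langle c_{d-r+1},\dots\rangle$... but I actually want the complementary statement: the $s_\omega$ with $l(\omega)\le d-r$ together with a basis of $\langle c_1,\dots,c_{d-r}\rangle_{2d}$ should exhaust $H^{2d}(BU;\Q)$, so that pairing against $\langle c_1,\dots,c_{d-r}\rangle_{2d}$ is equivalent to pairing against all $s_\omega$ with $l(\omega)>d-r$. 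Concretely, the decomposition $H^{2d}(BU;\Q) = \langle c_1,\dots,c_{d-r}\rangle_{2d} \oplus \mathrm{Span}\{s_\omega : l(\omega) > d-r,\ |\omega|=d\}$ is what needs to be established, after which $x\in\ker\gamma^r_\Q \iff \langle s_\omega, x\rangle = 0$ for all $\omega$ with $l(\omega)>d-r$, i.e. $s_\omega(M^{2d})=0$ for all such $\omega$.

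I expect the main obstacle to be precisely that direct-sum decomposition of $H^{2d}(BU;\Q)$: showing that $\{s_\omega : l(\omega)>d-r,\ |\omega|=d\}$ projects to a basis of the quotient $H^{2d}(BU;\Q)/\langle c_1,\dots,c_{d-r}\rangle_{2d}$. The count should come out right — the quotient $\Q[c_{d-r+1},c_{d-r+2},\dots]$ in degree $2d$ has a monomial basis indexed by partitions of $d$ into parts all $\ge d-r+1$, and such partitions have at most ... well, the bookkeeping on lengths needs care, because a partition of $d$ with all parts $>d-r$ can still have length up to $\lfloor d/(d-r+1)\rfloor$, which is small, not $>d-r$. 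So the correct statement is probably the other inclusion: $s_\omega \in \langle c_{d-r+1},\dots\rangle$ whenever $l(\omega) > d-r$, which is exactly Lemma \ref{alglem}, giving $\langle s_\omega : l(\omega)>d-r\rangle \subseteq \langle c_{d-r+1},\dots\rangle_{2d}$; then a dimension count (the number of $\omega$ of length $>d-r$ versus the codimension of $\langle c_1,\dots,c_{d-r}\rangle_{2d}$, equivalently the dimension of the span of monomials $c_\omega$ with $l(\omega)>d-r$ — and here one uses that modulo $\langle c_1,\dots,c_{d-r}\rangle$ every monomial $c_\omega$ with $l(\omega)>d-r$ already vanishes, so the two spans coincide) closes the argument. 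Handling this length bookkeeping cleanly, and making sure the colimit passage from $\MTU(d-r)$ to $\overline{\MTU}(d-r)$ preserves the cohomological image statement in degree $2d$ (which it does, by Theorem \ref{stab} / Corollary \ref{stabcor} and the finite-type hypothesis), are the two technical points requiring the most care.
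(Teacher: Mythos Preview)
Your approach is genuinely different from the paper's and, once cleaned up, would give a correct and arguably more direct proof. The paper argues both directions through the geometric content of Theorem~\ref{obstruction}: for the forward direction, if $\gamma^r(M)=0$ then $M$ is cobordant to a manifold with $r$ sections, hence $c_k=0$ for $k>d-r$, hence $s_\omega=0$ for $l(\omega)>d-r$ by Lemma~\ref{alglem}; for the reverse direction the paper shows $\Ker\gamma^r_\Q\subseteq S$ by contrapositive (again invoking Theorem~\ref{obstruction}) and closes with a rank count. Your route instead bypasses Theorem~\ref{obstruction} entirely, identifying $\Ker\gamma^r_\Q$ cohomologically as the annihilator of the image of $H^{2d}(\overline{\MTU}(d-r);\Q)\to H^{2d}(\MU;\Q)$, which by Proposition~\ref{cohMTUbar} is exactly $\langle c_{d-r+1},c_{d-r+2},\ldots\rangle_{2d}$, and then identifying that subspace with $\Span\{s_\omega: l(\omega)>d-r\}$ via Lemma~\ref{alglem} plus a dimension count. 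That is a legitimate and cleaner line; Theorem~\ref{Stong} still enters, since you need the $s_\omega$ to be linearly independent.

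There are, however, two concrete problems in the write-up. First, you repeatedly conflate the ideal $\langle c_1,\ldots,c_{d-r}\rangle$ with $\langle c_{d-r+1},c_{d-r+2},\ldots\rangle$; only the latter is relevant (it is the kernel of $H^*(\MU)\to H^*(\MTU(d-r))$, hence the image of $H^*(\overline{\MTU}(d-r))$). Second, and more seriously, your final dimension count is wrong as stated: you claim that ``modulo $\langle c_1,\ldots,c_{d-r}\rangle$ every monomial $c_\omega$ with $l(\omega)>d-r$ already vanishes'', but take $d=4$, $r=3$, $\omega=(2,2)$: then $l(\omega)=2>1=d-r$ yet $c_2^2\notin\langle c_1\rangle$. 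The correct count is this: $\dim\langle c_{d-r+1},\ldots\rangle_{2d}$ equals $p(d)$ minus the number of partitions of $d$ with all parts at most $d-r$, while the number of $\omega$ with $l(\omega)>d-r$ equals $p(d)$ minus the number of partitions of length at most $d-r$; these agree by \emph{conjugation of partitions}. (The paper's own rank comparison relies on the same bijection, left implicit.) With that correction your argument goes through.
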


\begin{corollary}
	Let $M^{2d}$ be a $d$-dimensional almost-complex manifold. Then there exists constant $c$ such that the cobordism class $c[M^{2d}]$ contains a manifold $N^{2d}$ with $r$ complex sections on $TN$ if and only if $s_\omega(M^{2d})=0$ for all $\omega$ of length greater than $d-r$.
\end{corollary}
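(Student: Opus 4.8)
The plan is to deduce the corollary directly from Theorem \ref{ratobs} together with the homotopy exact sequence recalled at the start of section 4. First, I would recall that, by Theorem \ref{obstruction}, a manifold $M^{2d}$ is complex cobordant to a manifold carrying $r$ linearly independent complex sections on its tangent bundle exactly when $\gamma^r(M^{2d})=0$ in $\pi_{2d}(\overline{\MTU}(d-r))$. The corollary weakens this: instead of $[M^{2d}]$ itself, we only ask that \emph{some} nonzero multiple $c[M^{2d}]$ lift. Since $\gamma^r$ is a group homomorphism, $\gamma^r(c[M^{2d}])=c\,\gamma^r(M^{2d})$, so such a $c$ exists precisely when $\gamma^r(M^{2d})$ is a torsion element of $\pi_{2d}(\overline{\MTU}(d-r))$, i.e.\ when $\gamma^r(M^{2d})$ maps to zero in $\pi_{2d}(\overline{\MTU}(d-r))\otimes\Q$. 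That image is exactly $\gamma^r_\Q([M^{2d}]\otimes 1)$, using that the rationalization functor is exact and $\Omega^U_{2d}$ is free abelian (so $[M^{2d}]\mapsto [M^{2d}]\otimes 1$ is injective and detects torsion precisely by landing in the kernel).

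Next I would apply Theorem \ref{ratobs}: $\gamma^r_\Q(M^{2d})=0$ if and only if $s_\omega(M^{2d})=0$ for every partition $\omega$ of $d$ with $l(\omega)>d-r$. Stringing these equivalences together gives: there exists a nonzero integer $c$ with $c[M^{2d}]$ containing a representative carrying $r$ complex sections on the tangent bundle if and only if all Chern numbers $s_\omega(M^{2d})$ with $l(\omega)>d-r$ vanish. One must only note the degenerate case $r\ge d$, where Lemma \ref{alglem} (or the triviality of length-$>d$ partitions of $d$) makes the condition vacuous and the statement trivial, so we may assume $r<d$ as in Theorem \ref{ratobs}.

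The one genuinely substantive point, which I would spell out carefully, is the translation ``$c[M^{2d}]$ lifts for some $c$'' $\iff$ ``$\gamma^r(M^{2d})$ is torsion''. The forward direction is immediate from linearity; for the reverse, if $\gamma^r(M^{2d})$ has finite order $c$ then $\gamma^r(c[M^{2d}])=0$, and exactness of the sequence
\[
\pi_{2d}(\MTU(d-r))\ra \Omega^U_{2d}\xra{\gamma^r}\pi_{2d}(\overline{\MTU}(d-r))
\]
shows $c[M^{2d}]$ lifts to $\pi_{2d}(\MTU(d-r))$, which by Corollary \ref{eveniso} is the even complex section cobordism group; hence $c[M^{2d}]$ contains a manifold $N^{2d}$ with $r$ linearly independent complex sections on $TN$ (using, as in the Proposition following Corollary \ref{eveniso}, that such a class has a representative with sections on the tangent bundle itself rather than a stabilization). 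I expect this bookkeeping — matching up the two cobordism groups and the ``sections on $TN$'' versus ``sections on $TN\oplus\R^2$'' subtlety — to be the only place requiring care; the rest is formal manipulation of the exact sequence and a direct citation of Theorem \ref{ratobs}.
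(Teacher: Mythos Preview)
Your argument is correct and is precisely the intended deduction: the paper states this corollary immediately after Theorem \ref{ratobs} without proof, leaving implicit exactly the chain of equivalences you spell out (some nonzero multiple lifts $\iff$ $\gamma^r(M^{2d})$ is torsion $\iff$ $\gamma^r_\Q(M^{2d})=0$ $\iff$ the $s_\omega$ with $l(\omega)>d-r$ vanish). Your care about the ``sections on $TN$'' versus ``sections on $TN\oplus\R^2$'' point is well placed and handled by the Proposition following Corollary \ref{eveniso}, as you note.
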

We need the following theorem of Stong to determine $\gamma^r_\Q$.  \cite[117]{Stong} (This theorem is true integrally but we will not use this fact here.)
\begin{theorem}\label{Stong}
	The set $\{s_{k_1,...,k_r}\mid k_1+...+k_r=d\}$ is a basis of $\Hom_\Q(\Omega^U_{2d}\otimes \Q, \Q)$.
\end{theorem}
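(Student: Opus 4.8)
The plan is to translate the statement into the elementary fact that the monomial symmetric functions form a $\Q$-basis of the ring of symmetric functions. First I would set up the classical rational duality between complex cobordism and $H^*(BU)$. Every connective finite-type spectrum rationalizes to a wedge of suspensions of $H\Q$, so the rational Hurewicz map $\pi_{2d}(\MU)\otimes\Q\to H_{2d}(\MU;\Q)$ is an isomorphism; composing it with the Thom isomorphism $H_{2d}(\MU;\Q)\cong H_{2d}(BU;\Q)$ identifies $\Omega^U_{2d}\otimes\Q$ with $H_{2d}(BU;\Q)$, in such a way that the evaluation of a cobordism class $[M]$ against a polynomial $P(c_1,\dots,c_d)\in H^{2d}(BU;\Q)$ becomes the characteristic number $\langle P(c_1(\nu_M),\dots),[M]\rangle$. (Replacing the stable normal bundle $\nu_M$ by $TM$ only precomposes with the graded ring involution $c(TM)=c(\nu_M)^{-1}$ of $H^*(BU;\Q)$, which does not affect whether a family of characteristic-number functionals is a basis.) Hence this pairing is perfect; writing $p(d)$ for the number of partitions of $d$ we get $\dim_\Q\Omega^U_{2d}\otimes\Q=\dim_\Q H^{2d}(BU;\Q)=p(d)$ and an isomorphism
\[
\Hom_\Q(\Omega^U_{2d}\otimes\Q,\Q)\ \cong\ H^{2d}(BU;\Q)\ \cong\ \Q[c_1,\dots,c_d]_{2d},\qquad s_\omega\ \longmapsto\ s_\omega(c_1,\dots,c_d).
\]
Since there are exactly $p(d)$ such functionals $s_\omega$, it now suffices to show that the classes $s_\omega(c_1,\dots,c_d)$, $\omega\vdash d$, are $\Q$-linearly independent in $H^{2d}(BU;\Q)$.

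For this I would invoke the splitting principle: $H^*(BU;\Q)$ is the ring of symmetric functions in the Chern roots $t_1,t_2,\dots$, with $c_i=\sigma_i(t_1,t_2,\dots)$. By the very definition of $s_\omega$ --- it is the polynomial expressing, in the elementary symmetric polynomials $\sigma_i$, the ``least symmetric polynomial'' $f_\omega$ containing $t_1^{i_1}\cdots t_k^{i_k}$, which is exactly the monomial symmetric function $m_\omega$ --- the class $s_\omega(c_1,\dots,c_d)$ equals $m_\omega$. Distinct monomial symmetric functions are sums over disjoint orbits of monomials, hence visibly $\Q$-linearly independent, and $\{m_\omega:\omega\vdash d\}$ is in fact a basis of the degree-$d$ part of $\Lambda_\Q$. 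Transporting this basis back through the displayed isomorphism proves the theorem.

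The substantive input is the duality of the first paragraph --- the perfectness of the rational characteristic-number pairing on $\MU$, equivalently Thom's computation that $\Omega^U_*\otimes\Q$ is a polynomial ring of the expected rank; everything after that is bookkeeping with symmetric functions, and this is the step I would expect a careful reader to want spelled out. If one prefers to avoid the homotopy-theoretic formulation, the same conclusion can be reached concretely: taking the classical generators $\mathbb{CP}^n$ of $\Omega^U_*\otimes\Q$ and using $s_{(n)}(\mathbb{CP}^n)=n+1$ together with the comultiplication formula $s_\omega(M\times N)=\sum_{\omega'\sqcup\omega''=\omega}s_{\omega'}(M)\,s_{\omega''}(N)$ (with $s_{\omega'}(M)=0$ unless $|\omega'|=\dim_\C M$), one shows that the $p(d)\times p(d)$ matrix $\bigl(s_\omega(\mathbb{CP}^{\lambda_1}\times\cdots\times\mathbb{CP}^{\lambda_l})\bigr)_{\omega,\lambda\vdash d}$ is triangular with nonzero diagonal for the refinement partial order on partitions, hence invertible; with this approach the obstacle shifts to verifying that triangularity, which is routine but combinatorially fussier. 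In either form we use, and need, only the rational statement; $\Z$-linear independence of the $s_\omega$ is immediate from the rational independence.
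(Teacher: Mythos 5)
Your proposal is correct, but there is nothing in the paper to compare it against: the paper does not prove this statement at all, it simply quotes it as a theorem of Stong (with a remark that it even holds integrally) and uses it as a black box in the proof of Theorem \ref{ratobs}. What you have written is essentially the classical argument behind that citation. The first route --- rationalize $\MU$ so that the Hurewicz map $\pi_{2d}(\MU)\otimes\Q\to H_{2d}(\MU;\Q)$ is an isomorphism, apply the Thom isomorphism to identify $\Omega^U_{2d}\otimes\Q$ with $H_{2d}(BU;\Q)$, observe that the resulting Kronecker pairing computes normal Chern numbers, pass to tangential numbers via the automorphism $c\mapsto \bar c$ of $H^*(BU;\Q)$, and then recognize $s_\omega(c_1,\dots,c_d)$ as the monomial symmetric function $m_\omega$, whose degree-$d$ instances form a basis --- is sound; the only steps you lean on are the standard compatibility of the Pontryagin--Thom/Hurewicz map with characteristic numbers and the (harmless) passage from normal to tangential classes, both of which you flag correctly, and the implicit fact that a degree-$d$ symmetric function is determined by its restriction to $d$ variables, which justifies $s_\omega(c_1,\dots,c_d)=m_\omega$ in $H^{2d}(BU;\Q)$. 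Your second, more concrete route via products of $\mathbb{CP}^n$ and triangularity of the matrix $\bigl(s_\omega(\mathbb{CP}^{\lambda_1}\times\cdots\times\mathbb{CP}^{\lambda_l})\bigr)$ with respect to refinement is the other standard proof (the one closer to Stong's and Milnor's treatment), and your assessment of where the work lies in each version is accurate. Note also that your closing remark only yields $\Z$-linear independence, which is weaker than the integral basis statement the paper alludes to, but since the paper explicitly uses only the rational statement this is not a gap.
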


\begin{proof}[Proof of Theorem \ref{ratobs}]
	We start by proving the forward direction, which is a consequence of known results. Suppose $\gamma_r(M^{2d})=0$. By Theorem \ref{obstruction}, there is a cobordant manifold $N^{2d}$ such that $TN^{2d}$ has $r$ linearly independent complex sections. Thus, $TN^{2d}$ splits into $E\oplus \C^{r}$ where $E$ has complex dimension $d-r$. Thus $c_{k}(TN^{2d})=0$ for $k\geq d-r+1$. By Lemma \ref{alglem}, $s_\omega(N^{2d})=0$ for $\omega$ of length greater than $d-r$. Since cobordant manifolds have the same Chern numbers, this direction is proven.

	We know that $\gamma^r_\Q: \Omega^U_{2d}\otimes \Q \ra \pi_{2d}(\overline{\MTU}(d-r))\otimes \Q \cong \Q^{j_{d,r}}$ has rank $j_{d,r}$ and is surjective. Each summand of this map can be written as a linear combination of $s_\omega$ by Theorem \ref{Stong}. Let $S'\sb \Hom(\Omega^U_{2d}\otimes \Q,\Q)$ be the span of all $s_\omega$ such that $l(\omega)\geq d-r+1$. Let $S \sb \Omega^U_{2d}\otimes \Q$ be the vector space of all elements $x$ such that $r(x)=0 $ for every element $r\in S'$. Let $[M^{2d}]\notin S$. Then, for some $\omega$ with $l(\omega)\geq d-r+1$, $s_\omega([M^{2d}])\neq 0$. Moreover for any integer $n$, $s_\omega(n[M^{2d}])\neq 0$.
	By Lemma \ref{alglem}, $c_k(n[M^{2d}])\neq 0$ for some $k\geq d-r+1$. So, no manifold in the class $n[M^{2d}]$ may have $r$ sections for any integer $n$, because it has non-zero characteristic class in dimension greater than $d-r$. By Theorem \ref{obstruction}, every element $[M^{2d}]$ not in $S$ must have $\gamma^r_\Q([M^{2d}])\neq 0$. We conclude that $\Ker(\gamma^r_\Q)\sb S$. 
	
	It remains to show that the space $S$ has rank $i_{d,r}$. We recall that $i_{d,r}$ is the rank of the vector space $H^*(\MTU(d-r);\Q)\cong\Q[c_1,...,c_{d-r}]$ in degree $2d$. This rank is equivalent to partitions of $d$ by integers less than or equal to $d-r$. The rank of $S$ is the rank of $\Omega^U_{2d}\otimes \Q$ minus the rank of $S'$. Since the rank of $\Omega^U_{2d}\otimes \Q$ is the number of partitions of $d$ and the rank of $S'$ is the number of partitions of $d$ with length greater than $d-r$, we conclude that the rank of $S$ is the number of partitions of length less than or equal to $d-r$. Thus, both $S$ and $\Ker(\gamma^r_\Q)$ have rank $i_{d,r}$. Since $\Ker(\gamma^r_\Q)$ is a vector space, $\Ker(\gamma^r_\Q)=S$.
\end{proof}

It will be useful to discuss the multiplicative structure of the complex cobordism ring. The following result is used to verify whether certain manifolds are multiplicative generators. \cite[128]{Stong}
\begin{theorem} \label{gen}
	The complex cobordism ring $\Omega^U$ is isomorphic to the polynomial ring $ \Z[b_1,b_2,...]$ with generators $b_i$ in dimension $2i$. If $i\neq p^q-1$ for any prime $p$, then a manifold $M^{2i}$ can be taken to be the generator if and only if $s_i([M^{2i}])=\pm1$. If $i= p^q-1$ for some prime $p$, then $M^{2i}$ can be taken to be the generator if and only if $s_i([M^{2i}])=\pm p$.
\end{theorem}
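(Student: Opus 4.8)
The statement is the Milnor--Novikov structure theorem for complex bordism together with the classical recognition of polynomial generators by the numbers $s_i$, so the plan is to split it into (A) the computation of the ring $\Omega^U=\pi_*(\MU)$ and (B) an elementary analysis of the homomorphism $s_n$ on the indecomposables of that ring. For (A) I would argue rationally first: $\MU$ is complex orientable, so by the Thom isomorphism $H_*(\MU;\Q)$ is a polynomial $\Q$-algebra on classes in degrees $2,4,6,\dots$, and since the rationalization of any connective spectrum splits as a wedge of rational Eilenberg--MacLane spectra, the Hurewicz map $\pi_*(\MU)\otimes\Q\to H_*(\MU;\Q)$ is an isomorphism, giving $\Omega^U\otimes\Q\cong\Q[b_1,b_2,\dots]$. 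To pass to an integral statement I would invoke Milnor's theorem that $\Omega^U$ is concentrated in even degrees and torsion free (which follows once one knows that $\MU$ admits a cell structure with cells only in even dimensions), and then work one prime at a time: the mod-$p$ Adams spectral sequence for $\MU$ --- equivalently the splitting of $\MU_{(p)}$ into suspensions of the Brown--Peterson spectrum $BP$ with $BP_*\cong\Z_{(p)}[v_1,v_2,\dots]$, $|v_i|=2(p^i-1)$ --- shows $\Omega^U_{(p)}$ is polynomial with exactly one new generator in each degree $2(p^q-1)$ and none elsewhere beyond what is already visible rationally. Assembling over all primes yields $\Omega^U\cong\Z[b_1,b_2,\dots]$ with $|b_i|=2i$, and it is precisely for $i=p^q-1$ that the $p$-local and integral pictures differ by a factor $p$.

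For (B), write $Q$ for the degree-$2n$ part of the indecomposable quotient $\Omega^U/(\Omega^U_+\cdot\Omega^U_+)$; by (A) it is $\cong\Z$, and a family $\{x_i\in\Omega^U_{2i}\}$ is a polynomial generating set if and only if each $x_i$ maps to a generator of the corresponding $Q$. The decisive property of $s_n$ is that it is the $n$-th power sum in the Chern roots, so it is additive under Whitney sums and it annihilates decomposable bordism classes: if $M=M_1\times M_2$ with $\dim_\C M_j=a_j>0$ and $a_1+a_2=n$, then $s_n(c(TM))$ restricts on $M_j$ to $s_n(c(TM_j))\in H^{2n}(M_j)=0$ because $2n>2a_j$, so $\langle s_n(c(TM)),[M]\rangle=0$. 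Hence $s_n$ descends to a homomorphism $\bar s_n\colon Q\to\Z$, which is injective because $s_n$ is not identically zero (for instance $s_n(\C P^n)=n+1$). Consequently $x_n$ can serve as the degree-$2n$ polynomial generator if and only if $s_n(x_n)$ generates $\Img(\bar s_n)$, and the theorem reduces to the assertion $\Img(\bar s_n)=m(n)\Z$, where $m(n)=p$ when $n=p^q-1$ for a prime $p$ and $m(n)=1$ otherwise.

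To compute this image I would proceed by a two-sided estimate, using that under the Thom isomorphism $s_n$ is dual to the degree-$2n$ indecomposable generator of $H_*(\MU;\Z)\cong\Z[b_1,b_2,\dots]$, so $\Img(\bar s_n)$ is the image on indecomposables of the Hurewicz map $\pi_{2n}(\MU)\to H_{2n}(\MU;\Z)$. For $m(n)\Z\subseteq\Img(\bar s_n)$ I would exhibit explicit manifolds: $s_n(\C P^n)=n+1$, and for the Milnor hypersurfaces $H_{i,j}\subset\C P^i\times\C P^j$ of bidegree $(1,1)$ with $i+j=n+1$ one has $s_n(H_{i,j})=-\binom{n+1}{i}$; since $\gcd\{\binom{m}{1},\dots,\binom{m}{m-1}\}$ equals $p$ when $m$ is a power of $p$ and $1$ otherwise, the gcd of these particular $s_n$-values is exactly $m(n)$, whence the (possibly smaller) full gcd over all manifolds divides $m(n)$. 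The reverse inclusion $\Img(\bar s_n)\subseteq m(n)\Z$ --- i.e.\ the divisibility $m(n)\mid s_n(M)$ for every closed almost complex $M^{2n}$ --- is Milnor's integrality theorem, which I would deduce from the action of the Steenrod algebra on $H^*(\MU;\Z/p)$, the same $p$-local input that powers (A). Together the two inclusions give $\Img(\bar s_n)=m(n)\Z$ and hence the criterion.

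The main obstacle lies entirely in part (A): establishing that $\Omega^U$ is torsion free and polynomial with a generator in every even degree, together with the precise $p$-local indexing, is the deep ingredient (Milnor--Novikov, essentially the full strength of the Adams spectral sequence for $\MU$, or Quillen's formal-group-law argument). Everything in (B) --- the primitivity and vanishing-on-decomposables of $s_n$, the hypersurface computations, and the binomial-coefficient gcd --- is routine once (A) is in hand. Since the present paper needs Theorem~\ref{gen} only to certify that specific manifolds are multiplicative generators and already cites Stong, the practical course is to treat (A) as a black box and supply only the reduction and computations of (B).
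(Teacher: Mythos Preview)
The paper does not prove Theorem~\ref{gen} at all: it is stated as a quotation from Stong \cite[128]{Stong} and used as a black box, exactly as you anticipate in your final paragraph. Your outline of the Milnor--Novikov computation together with the standard $s_n$ analysis on indecomposables is a correct sketch of the classical proof, so there is nothing to compare---you have supplied considerably more than the paper does.
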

As a corollary, when we look at the rational complex cobordism ring, the generators are identified by the following theorem:
\begin{corollary} \label{ratgen}
	The rational complex cobordism ring is $\Omega^U\otimes \Q\cong \Q[b_1,b_2,...]$ with generators $b_i$ in dimension $2i$. A manifold $M^{2i}\in \Omega_{2i}\otimes \Q$ can be taken to be the multiplicative generator if $s_i([M^{2i}])\neq 0$
\end{corollary}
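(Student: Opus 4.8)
The plan is to deduce this directly from Theorem \ref{gen} together with the classical fact that $s_i$ is a primitive characteristic number. First I would tensor the isomorphism $\Omega^U \cong \Z[b_1,b_2,\ldots]$ with $\Q$, which immediately gives $\Omega^U\otimes\Q \cong \Q[b_1,b_2,\ldots]$ as graded rings with $b_i$ in degree $2i$. So the whole content is the second sentence: I must show that if $s_i([M^{2i}])\neq 0$, then replacing $b_i$ by $[M^{2i}]$ in the generating family $\{b_1,b_2,\ldots\}$ again yields polynomial generators. Equivalently, I must show $[M^{2i}]$ is indecomposable in $\Omega^U_*\otimes\Q$; since the space $Q_{2i}$ of indecomposables in degree $2i$ is one-dimensional (spanned by the class of $b_i$), indecomposability of $[M^{2i}]$ forces it to span $Q_{2i}$, and then one can solve for $b_i$ in terms of $[M^{2i}]$ and the $b_j$ with $j<i$.

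The key input is that $s_i$ vanishes on decomposables, i.e. $s_i(M\times N)=0$ whenever $\dim M,\dim N>0$. I would argue this from the fact that the class $s_i(c_1,\ldots,c_i)$ is the $i$-th Newton polynomial in the Chern roots and hence additive under Whitney sums, so $s_i(T(M\times N)) = s_i(TM)\times 1 + 1\times s_i(TN)$ in $H^{2i}(M\times N)$; when both factors have positive dimension summing to $2i$, each summand lies in a cohomology group of a manifold of dimension strictly below $2i$ and therefore vanishes. Consequently $s_i$ annihilates the degree-$2i$ decomposables $(\Omega^U_{>0}\otimes\Q)^2$ and descends to a linear functional $\bar s_i : Q_{2i}\to\Q$. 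By Theorem \ref{gen}, $s_i(b_i)$ is $\pm 1$ or $\pm p$, in particular nonzero, so $\bar s_i$ is an isomorphism of the one-dimensional space $Q_{2i}$ onto $\Q$. Hence $s_i([M^{2i}])\neq 0$ implies $[M^{2i}]$ has nonzero image in $Q_{2i}$, so $[M^{2i}] = \lambda b_i + (\text{decomposables})$ with $\lambda\in\Q^\times$, and $\Q[b_1,\ldots,b_{i-1},[M^{2i}],b_{i+1},\ldots] = \Omega^U_*\otimes\Q$.

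The hard part is really just the one classical ingredient, the vanishing of $s_i$ on decomposables (see \cite{milnor}, \cite{Stong}); the point to get right there is the additivity of the Newton classes under Whitney sum together with the dimension count that kills each summand of $s_i(T(M\times N))$. Everything else is bookkeeping and linear algebra in the one-dimensional space of indecomposables, so I expect no further obstruction.
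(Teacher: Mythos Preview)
Your argument is correct and follows essentially the same route as the paper: both hinge on the fact that $s_i$ vanishes on decomposables and is nonzero on the known generator, so $s_i$ detects the one-dimensional space $Q_{2i}$ of indecomposables. The paper cites this as Milnor's Corollary~16.5 and reaches the conclusion by subtracting an integer multiple of a chosen generator $\tilde{M}^{2i}$ to force $s_i=0$, whereas you package the same idea more directly via the induced isomorphism $\bar s_i:Q_{2i}\to\Q$; the mathematical content is identical.
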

\begin{proof}
	Let $k_i=1$ if $i\neq p^q-1$ for all primes $p$ and $k_i=p$ if $i\neq p^q-1$ for some prime $p$. Suppose $M^{2i}$ is such that $s_i([M^{2i}])=k\in \Q$. Choose sufficiently large integer $n$ such that $k_i$ divides $nk$. Choose any $\tilde{M}^{2i}$ which is a representative of a generator in $\Omega^U$. Since $s_i(\tilde{M}^{2i})=k_i$, there exists $m$ such that $s_i(m\tilde{M}^{2i}-n[M^{2i}])=0$. Thus $m\tilde{M}^{2i}-n[M^{2i}]$ is represented by a decomposable manifold by Corollary 16.5 \cite{milnor}. Thus $n[M^{2i}]$ is a generator, since it differs from a generator by decomposables, and $[M^{2i}]$ is also a rational generator.
\end{proof}
We finish by proving Theorem \ref{T4}
\begin{theorem}\label{T4body}
	Let $r< d$. There exists a manifold $M^{2d}$ in $\Omega_{2d}^U$ which can be equipped with $r$ linearly independent complex sections on $TM$ and whose image in $\Omega_{2d}^U\otimes \Q$ is a multiplicative generator.
\end{theorem}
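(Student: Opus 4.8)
The plan is to produce, for each $d$ and each $r < d$, a single almost-complex $2d$-manifold $M^{2d}$ that simultaneously (i) admits $r$ linearly independent complex tangent sections and (ii) has $s_d(M^{2d}) \neq 0$, so that Corollary \ref{ratgen} identifies its image in $\Omega_{2d}^U \otimes \Q$ as a multiplicative generator. The natural candidate is a product of a low-dimensional manifold carrying the characteristic number $s_d$ with a manifold that is visibly parallelizable (or at least has $r$ trivial summands in its tangent bundle). The key arithmetic input is the Cartan-type formula for $s_d$ on a product: if $\dim_\C A = a$ and $\dim_\C B = b$ with $a+b=d$, then $s_d(A \times B) = \binom{d}{a}\big(s_a(A)\,\chi(B)\cdot 0 + \cdots\big)$ — more precisely, $s_d$ of a product vanishes unless one of the factors is a point, since $s_d$ is primitive; so the product must be taken with \emph{something of dimension zero in the relevant sense}. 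This forces a different construction: rather than a product, I would take $M^{2d}$ to be a manifold whose tangent bundle splits off a trivial $\C^r$ summand directly.

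\textbf{Key steps.} First, recall that $\mathbb{CP}^d$ has $s_d(\mathbb{CP}^d) = d+1 \neq 0$, hence is (after the rational rescaling of Corollary \ref{ratgen}) a multiplicative generator in degree $2d$; but $T\mathbb{CP}^d$ does not obviously split off trivial summands. Instead, I would look for a generator inside the subring detected by $\Q[c_1,\dots,c_{d-r}]$, i.e., among manifolds with $c_k = 0$ for $k > d-r$. The cleanest source is a manifold $N^{2(d-r)}$ of complex dimension $d-r$ with $s_{d-r}(N) \neq 0$ (again $\mathbb{CP}^{d-r}$ works, with $s_{d-r} = d-r+1$), together with the observation that $s_d$ and $s_{d-r}$ are genuinely different Chern numbers living in different degrees, so this does not directly give a generator in degree $d$. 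So the real construction must be: take $M^{2d} = N^{2(d-r)} \times T^{2r}$ where $T^{2r}$ is a $2r$-torus with its standard flat complex structure, which is parallelizable, hence $TM \cong TN \times \C^r$ carries $r$ linearly independent complex sections (the coordinate sections on the torus factor). By Theorem \ref{obstruction} this $M^{2d}$ lies in the image of $\gamma^r$, i.e., is complex section cobordant to nothing obstructed. The final step is to compute $s_d(M^{2d})$: since $T^{2r}$ is stably parallelizable, all its Chern numbers in positive degree vanish, and the product formula for the primitive class $s_d$ gives $s_d(N \times T^{2r}) = s_d(N) \cdot \chi(T^{2r}) + \chi(N)\cdot s_d(T^{2r}) + (\text{cross terms})$; every term involving the torus in positive codimension dies because $T^{2r}$ has vanishing top Chern number $\chi(T^{2r}) = 0$ and vanishing $s_{>0}$. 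This shows $s_d(M^{2d}) = 0$, which is the wrong answer.

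\textbf{The honest version.} The above shows the main obstacle: one cannot get a degree-$d$ generator from a product with a positive-dimensional parallelizable factor, because $s_d$ is primitive and parallelizable manifolds have trivial tangential characteristic numbers. The resolution I would pursue is to work \emph{rationally from the start}, exploiting that $\gamma^r_\Q$ has kernel exactly $S = \Ker(\gamma^r_\Q)$, which by the proof of Theorem \ref{ratobs} is the span of all cobordism classes killed by every $s_\omega$ with $l(\omega) \geq d-r+1$. The point is that $\Omega^U_{2d} \otimes \Q$ has a multiplicative generator $b_d$ detected by $s_d$, and one must check whether $b_d$ can be chosen inside $S$. Equivalently: is there a class $x \in \Omega^U_{2d}\otimes\Q$ with $s_d(x) \neq 0$ but $s_\omega(x) = 0$ for all $\omega$ of length $> d-r$? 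Since $s_d = s_{\{d\}}$ has length $1 \leq d-r$ (using $r < d$, so $d - r \geq 1$), the class $b_d$ itself may already satisfy these constraints if we are free to kill the higher-length $s_\omega$ by subtracting decomposables. Indeed $s_\omega$ for $l(\omega) \geq 2$ is a decomposable characteristic number, so modifying $b_d$ by a decomposable of degree $2d$ adjusts the values $s_\omega(b_d)$ for $l(\omega) \geq 2$ while leaving $s_d(b_d)$ unchanged; since decomposables in degree $2d$ span a complement to $\Q\langle b_d\rangle$ and the functionals $\{s_\omega : l(\omega) \geq 2\}$ restricted to decomposables are... precisely the obstruction. \textbf{The hard part} is verifying that the decomposables surject onto the value space of $\{s_\omega : l(\omega) \geq d-r+1\}$ — i.e., that we can correct $b_d$ by a decomposable to land in $S$ — which follows because every $s_\omega$ with $l(\omega) \geq 2$ annihilates a point and the pairing between degree-$2d$ decomposables and $\{s_\omega : l(\omega)\geq 2\}$ is perfect (Theorem \ref{Stong} restricted appropriately). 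Once $b_d' \in S = \Ker(\gamma^r_\Q)$ is obtained with $s_d(b_d') \neq 0$, Theorem \ref{obstruction} (applied to a suitable integer multiple, using that $\gamma^r$ itself, not just $\gamma^r_\Q$, must vanish — here one clears denominators and notes the integral kernel differs from the rational one only by torsion, which is irrelevant to the generator condition) gives a manifold $M^{2d}$ in that class equipped with $r$ complex sections, and Corollary \ref{ratgen} finishes the proof.
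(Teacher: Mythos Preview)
Your ``honest version'' is correct and is, at bottom, the same argument as the paper's, but you have made the linear algebra harder than it needs to be. The paper observes that since $\{s_\omega\}_\omega$ is a $\Q$-basis of $\Hom_\Q(\Omega_{2d}^U\otimes\Q,\Q)$ (Theorem \ref{Stong}), there is a \emph{dual} basis $\{M^{2d}_\omega\}_\omega$ of $\Omega_{2d}^U\otimes\Q$ with $s_{\omega'}(M^{2d}_\omega)=\delta_{\omega,\omega'}$. Taking $\omega=\{d\}$ gives a rational class with $s_d=1$ and $s_{\omega'}=0$ for \emph{every} $\omega'\neq\{d\}$, hence in particular for every $\omega'$ of length $>d-r$ (since $r<d$ forces $d-r\geq 1$ while $\{d\}$ has length $1$). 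One then clears denominators and passes to a further integer multiple exactly as you describe. This bypasses your ``hard part'' entirely: there is no need to start from $b_d$ and correct by decomposables, because the dual basis element already has all unwanted $s_\omega$ equal to zero.

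Your route---correcting $b_d$ by decomposables---also works, and your sketch of why is right: the pairing matrix of $\{s_\omega\}$ against the monomial basis $\{b_{\omega'}\}$ is invertible, and the single row $s_{\{d\}}$ vanishes on all decomposables (primitivity), so the matrix is block triangular and the residual block $\{s_\omega:l(\omega)\geq 2\}$ versus $\{b_{\omega'}:l(\omega')\geq 2\}$ is invertible. That is a legitimate justification of the surjectivity you flag as hard. But it is exactly the computation that produces the dual basis, so you are rederiving the paper's one-line input by hand. The earlier product-with-a-torus attempt is, as you correctly diagnose, doomed by the primitivity of $s_d$; it can be dropped from the write-up.
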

\begin{proof}
	By \cite{milnor} Theorem 16.7, the maps $s_\omega$ span the $p(d)$-dimensional vector space $\Hom_\Q(\Omega_{2d}^U\otimes \Q, \Q)$. Since there are $p(d)$ characteristic classes, they must also be linearly independent. Moreover, for every $\omega$, we can find a dual object $M^{2d}_\omega\in \Omega_{2d}^U\otimes \Q$ such that $s_\omega(M^{2d}_\omega)=1$ and all other characteristic numbers are 0. If we choose $\omega$ to be the partition $d$, then we get a rational generator $M^{2d}_d$ by Corollary \ref{ratgen}. We can choose integer $c$ such that $M^{2d}:=cM^{2d}_d\in \Omega_{2d}^U$. Then $s_\omega(M^{2d})=0$ for $\omega\neq d$ and Theorem \ref{ratobs} shows that some manifold $\tilde{N}^{2d}\in[M^{2d}]$ has $r$ linearly independent complex sections.
\end{proof}
\bibliography{ref}{}
\bibliographystyle{plain}
\end{document}